\documentclass[10pt]{amsart}
\usepackage{amsmath}
\usepackage{amsthm}
\usepackage{amscd}
\usepackage{amssymb}
\usepackage{amsfonts}
\usepackage{graphics}
\usepackage[dvipsnames]{xcolor}
\usepackage[latin1]{inputenc}
\usepackage[T1]{fontenc}
\usepackage{soul}
\usepackage{cancel}
\usepackage{geometry}
\usepackage{cite}
\usepackage{enumitem}

\oddsidemargin=.32in \evensidemargin=.22in 
 \textwidth 14.7cm

\date{}

\newlength{\defbaselineskip}
\setlength{\defbaselineskip}{\baselineskip}

\long\def\salta#1{\relax}

\theoremstyle{plain}
\newtheorem{theorem}{Theorem}[section]
\newtheorem{proposition}[theorem]{Proposition}
\newtheorem{lemma}[theorem]{Lemma}

\theoremstyle{definition}
\newtheorem{definition}[theorem]{Definition}

\newtheorem{remark}[theorem]{Remark}

\theoremstyle{remark}

\definecolor{jose}{rgb}{0.3,0.4,0.8}

\def\dys{\displaystyle}

\def\t1p0{T^{1,p}_{0}(\Omega)}

\def\m2{M^{\frac{N(p-1)}{N-1}}(\Omega)}

\def\div{\text{{\rm div}}}

\def\w-1p'{W^{-1,p'}(\Omega)}
\def\pw-1p'u{L^{p'}(0,1;W^{-1,p'}(\Omega))}

\def\dys{\displaystyle}

\def\lp'n{(L^{p'}(\Omega))^{N}}


\numberwithin{equation}{section}

\title[Blow-up for singular problems with natural growth]{A blow-up approach for singular elliptic problems with natural growth in the gradient}

\keywords{Nonlinear elliptic equations, Singular gradient terms, Blow-up argument.
\\
\indent {\textsc{MSC:}} 35B44, 35B45, 35J25, 35J62, 35J75.
\\
\indent Research supported by PGC2018-096422-B-I00 (MCIU/AEI/FEDER, UE), Junta de Andaluc\'ia FQM-116 and Programa de Contratos Predoctorales del Plan Propio de la Universidad de Granada.
}


\begin{document}

\maketitle

\begin{center}
{\small SALVADOR L\'OPEZ-MART\'INEZ\\[0.4 cm] 
Inria, Univ. Lille, CNRS, UMR 8524 - Laboratoire Paul Painlev\'e, F-59000 
\\
Lille, France
\\[0.4 cm]
\textsl{Email adress: salvador.lopez-martinez@inria.fr}}
\end{center}

\hrule

\begin{abstract}
We prove existence and nonexistence results concerning elliptic problems whose basic model is
\begin{equation*}
\begin{cases}
\displaystyle-\Delta u+\mu(x)\frac{|\nabla u|^2}{(u+\delta)^\gamma}= \lambda u^p, &x\in \Omega,
\\
u> 0, &x\in \Omega,
\\
u=0, &x\in\partial\Omega,
\end{cases}
\end{equation*}
where $\Omega\subset\mathbb{R}^N (N\geq 3)$ is a bounded smooth domain, $\lambda>0$, $p>1$, $\delta\geq 0$, $\gamma>0$ and $\mu\in L^\infty(\Omega)$. The main achievement resides in handling a possibly singular ($\delta=0$) first order term having a nonconstant coefficient $\mu$ in the presence of a superlinear zero order term. Our approach for the existence results is based on fixed point theory. With the aim of applying it, a previous analysis on a related non-homogeneous problem is carried out. The required a priori estimates are proven via a blow-up method.
\end{abstract}

\hrule



\section{Introduction}

Let $\Omega\subset\mathbb{R}^N (N\geq 3)$ be a bounded domain of class $\mathcal{C}^{2}$, $g:\Omega\times (0,+\infty)\to\mathbb{R}$ be a Carath\'eodory function, and $f:[0,+\infty)\to [0,+\infty)$ be a continuous function. In this work we will study the existence of solution to elliptic problems of the following form:
\begin{equation}\label{lambdaproblem}
\begin{cases}
\dys-\Delta u+g(x,u)|\nabla u|^2= \lambda f(u), &x\in \Omega,
\\
u> 0, &x\in \Omega,
\\
u=0, &x\in\partial\Omega,
\end{cases}
\tag{$P_\lambda$}
\end{equation}
where $\lambda>0$ is a parameter. The precise conditions on functions $g,f$ and the statements of the main results regarding problem \eqref{lambdaproblem} will be shown in Section~\ref{mainresults}. For the sake of a clear presentation, we consider for now a model problem:
\begin{equation}\label{model1}
\begin{cases}
\dys-\Delta u+\mu(x)\frac{|\nabla u|^2}{(u+\delta)^\gamma}= \lambda u^p, &x\in \Omega,
\\
u> 0, &x\in \Omega,
\\
u=0, &x\in\partial\Omega,
\end{cases}
\end{equation}
where $\delta\geq 0$, $\gamma>0$, $p>1$ and $\mu\in L^\infty(\Omega)$. Our main goal is to allow $\mu$ to be nonconstant and even sign-changing, paying special attention to the singular case $\delta=0$. 


Problem \eqref{model1} for $\mu\equiv 0$ becomes semilinear; as $p>1$, it is usually referred to as superlinear. This superlinear case is classical and has been extensively studied in the literature. Indeed, both variational (in \cite{AmbrosettiRabinowitz}) and topological (in \cite{deFigLiNuss, GidasSpruck}) methods can be used to prove the existence of a solution to \eqref{model1} for all $\lambda>0$ provided $\mu\equiv 0$ and $p\in (1,2^*-1)$, where $2^*=\frac{2N}{N-2}$. It is well-known that Pohozaev's identity (see \cite{Pohozaev}) implies that the restriction $p<2^*-1$ is  necessary for the existence of solution to the superlinear problem if the domain is starshaped. 

The study of problem \eqref{model1} for a nontrivial $\mu$ was initiated in \cite{OrsinaPuel}. There, the authors consider $p\in (1,2^*-1)$, $\delta>0$ and $\mu\equiv \text{constant}>0$. In this setting, they prove several results which can be divided into two classes: on the one hand, those which lead to a set of solutions similar to that for the classical semilinear problem (i.e. existence for all $\lambda>0$) and, on the other hand, those which present differences such as nonexistence of solution for $\lambda>0$ small. In fact, the semilinear-like behavior is achieved provided $\gamma>1$, while the differences appear if either $\gamma\in (0,1)$ or $\gamma=1$ and $\mu>p$. In several proofs, the authors of \cite{OrsinaPuel} make use of the change of variable
\begin{equation}\label{changeofvariables}
v=\psi(u)=\int_0^u e^{-\int_0^s \frac{\mu}{(t+\delta)^\gamma}dt}ds.
\end{equation}
It is easy to check formally that, if $\mu\equiv\text{constant}$ and $\delta>0$, then the transformation \eqref{changeofvariables} turns \eqref{model1} into a semilinear problem in the variable $v$. Thus, roughly speaking, the gradient term is removed and semilinear techniques (such as variational methods) can be applied in general. However, such a transformation can be performed only if $\mu$ is constant.

We remark that problem \eqref{model1} in the  case $\gamma=1$ is specially interesting because of the condition $\mu>p$ that appears in \cite{OrsinaPuel}, which shows that the interaction between the gradient term and the superlinear term in \eqref{model1} plays an important role. To this respect, some results concerning the case $\gamma=1$ (always with $\delta>0$ and $\mu\equiv\text{constant}$) that improve those in \cite{OrsinaPuel} in some directions can be found in \cite{ACMA2, LiYinKe}. In the first work, the authors prove nonexistence of solution for every $\lambda>0$ small enough provided $\mu\geq p$, while in the second one the authors prove existence of solution for every $\lambda>0$ provided $\mu<\frac{2^*-1-p}{2^*-2}$. We point out that, in both mentioned works, no restriction on $p$ from above is imposed. Nevertheless, if $p\geq 2^*-1$, then the condition $\mu<\frac{2^*-1-p}{2^*-2}$ required by the existence result in \cite{LiYinKe} forces $\mu$ to be negative. We also stress that a blow-up argument is employed in \cite{LiYinKe} in order to obtain a priori estimates, even though the change of unknown \eqref{changeofvariables} is strongly used in order to get rid of a quadratic gradient term from the general problem that the authors consider and, in consequence, non-constant functions $\mu$ cannot be handled with their approach.

Still focusing on problem \eqref{model1} with $\gamma=1$, $\delta>0$ and $\mu\equiv\text{constant}$, the range $\frac{2^*-1-p}{2^*-2}\leq \mu<p$ has not been considered in the literature to our knowledge. However, in this particular situation it is easy to see that the transformation \eqref{changeofvariables} turns \eqref{model1} into a semilinear equation whose nonlinearity presents supercritical growth at infinity and subcritical growth at zero. Therefore, \cite[Theorem 8]{ArcoyaGamezOrsinaPeral} implies (after undoing the change of unknown) that there exists at least a solution to \eqref{model1} for every $\lambda>0$ large enough. Again, last (immediate) result is based on the change of unknown, so does not cover problem \eqref{model1} with nonconstant $\mu$.

On the other hand, the singular case $\delta=0$ has been dealt with recently in \cite{CMS}, one more time for $\mu\equiv\text{constant}>0$ (see also \cite{CarmonaMolinoMoreno, BoccardoOrsinaPorzio} for similar singular problems that involve a nonzero source term). The authors of \cite{CMS} show that, if $\gamma\in (0,1)$, then the situation is similar to the nonsigular case $\delta>0$. Indeed, they prove a nonexistence result for $\lambda>0$ small and an existence result for $\lambda>0$ large. On the contrary, for $\gamma\geq 1$ they prove nonexistence results \emph{for all} $\lambda>0$. This fact exposes the remarkable influence of a strong singularity in the equation. As far as we know, the $\mu\not\equiv\text{constant}$ case for $\delta=0$ has not been studied in the literature.

To sum up, in the present work we aim to develop an approach that permits to deal with $x-$dependent $\mu$ in problem \eqref{model1} and also with singular lower order terms, i.e. $\delta=0$. In order to do so, we will employ topological methods. More precisely, we will find solutions to \eqref{model1} as fixed points of certain compact operator that will be defined in Section~\ref{existence}. The well-definition of such an operator will require the well-posedness of the following problem:

\begin{equation}\label{modelfxprob}
\begin{cases}
\displaystyle-\Delta u+\mu(x)\frac{|\nabla u|^2}{u}=h(x), &x\in\Omega,
\\
u>0, &x\in\Omega,
\\
u=0, &x\in\partial\Omega,
\end{cases}
\end{equation}
where $0\lneq h\in L^q(\Omega)$ for some $q>\frac{N}{2}$.

Singular problems of this kind have risen interest in the recent years. In fact, the existence of solution with $\|\mu\|_{L^\infty(\Omega)}<\frac{1}{2}$ has been proven in \cite{B}, and extended to $\|\mu\|_{L^\infty(\Omega)}<1$ in \cite{MA}. As far as the uniqueness of solution is concerned, some results are known for problems similar to \eqref{modelfxprob}, even though they require either the singularity to be milder or $\mu$ to be constant (see \cite{AS, ACMA, CL}). We will prove that uniqueness for problem \eqref{modelfxprob} holds provided $\|\mu\|_{L^\infty(\Omega)}<1$; the proof is based on a comparison principle that we state in Section~\ref{mainresults} and prove in Section~\ref{existence}. Furthermore, we will show that the condition $\|\mu\|_{L^\infty(\Omega)}<1$ is natural by proving a nonexistence result provided $\mu>1$ in a neighborhood of $\partial\Omega$; the proof follows the ideas in \cite[Lemma 2.5]{CMS}. In next statement we summarize the new uniqueness and nonexistence results that we prove about problem \eqref{modelfxprob} and we include the previously known (\cite{B,MA}) existence part for completeness.

\begin{theorem}\label{fxmodelthm}
Let $0\lneq h\in L^q(\Omega)$ for some $q>\frac{N}{2}$ and let $0\lneq\mu\in L^\infty(\Omega)$. The following statements hold true:
\begin{enumerate}
\item If $\|\mu\|_{L^\infty(\Omega)}<1$, then there exists a unique finite energy solution to problem \eqref{modelfxprob}.
\item \label{item2nonexistence} If there exist an open domain $\omega\subset\subset\Omega$ and a constant $\tau>1$ such that $\mu(x)\geq\tau$ and $h(x)=0$, both for a.e. $x\in\Omega\setminus\omega,$ then problem \eqref{modelfxprob} admits no solution.
\end{enumerate}  
\end{theorem}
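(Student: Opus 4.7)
Existence for (1) is already available from \cite{B,MA}, so I would concentrate on uniqueness for (1) and on the nonexistence result (2).

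For \emph{uniqueness}, I would simply invoke the comparison principle announced in Section~\ref{mainresults}. Given two finite energy solutions $u_1,u_2\in H^1_0(\Omega)$, each is both sub- and supersolution of \eqref{modelfxprob} with the same datum $h$ and the same (zero) trace on $\partial\Omega$; applying the comparison principle in both directions forces $u_1\equiv u_2$. The real work lies, of course, in establishing the comparison principle itself, where the strict bound $\|\mu\|_{L^\infty}<1$ will be crucial to absorb the quadratic gradient contribution when testing the difference of the two equations (with a suitable truncation of $(u_1-u_2)^+$, or equivalently with a quantity of the form $\Psi(u_1)-\Psi(u_2)$ for a well-chosen $\Psi$).

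For \emph{nonexistence}, I argue by contradiction. Suppose a finite energy solution $u$ exists. Since $h\in L^q(\Omega)$ with $q>N/2$, standard elliptic regularity yields $u\in C(\overline\Omega)$; the strong maximum principle (using $0\lneq h$) gives $u>0$ in $\Omega$, so $u$ vanishes uniformly on $\partial\Omega$ while being bounded below by a positive constant on the compact set $\partial\omega$. On $D:=\Omega\setminus\overline\omega$, since $h\equiv 0$ and $\mu\geq\tau$, the equation reads $-\Delta u+\mu(x)|\nabla u|^2/u=0$. I would then introduce $v:=u^{1-\tau}$ on $D$ (well-defined because $u>0$); a direct computation using the equation gives
\begin{equation*}
-\Delta v \;=\; (\tau-1)\bigl(\mu(x)-\tau\bigr)\,u^{-\tau-1}|\nabla u|^{2} \;\geq\; 0,
\end{equation*}
so $v$ is superharmonic in $D$. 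Moreover $v$ is bounded on $\partial\omega$, while $v\to+\infty$ uniformly as $x\to\partial\Omega$ (because $u\to 0$ and $1-\tau<0$).

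To conclude I would compare $v$ with linear barriers. Let $V_n$ be the harmonic function in $D$ with boundary values $0$ on $\partial\omega$ and $n$ on $\partial\Omega$; by linearity $V_n=nW$ for $W$ the harmonic function with data $0,1$ on $\partial\omega,\partial\Omega$, and $W>0$ in $D$ by the strong maximum principle. For each fixed $n$ and sufficiently small $\varepsilon>0$, the uniform blow-up of $v$ at $\partial\Omega$ ensures $v\geq V_n$ on $\partial D_\varepsilon$, where $D_\varepsilon:=\{x\in D:\mathrm{dist}(x,\partial\Omega)>\varepsilon\}$; the weak minimum principle applied to the superharmonic function $v-V_n$ then yields $v\geq V_n$ on $D_\varepsilon$, and letting $\varepsilon\to 0$ extends this to all of $D$. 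Sending $n\to\infty$ forces $v(x)=+\infty$ at every interior point, the desired contradiction. The main obstacle in the entire statement is not in this argument but in the comparison principle invoked for (1): the singular term $\mu|\nabla u|^2/u$ blowing up where $u\to 0$ blocks any direct test by $(u_1-u_2)^+$, and one has to exploit the strict inequality $\|\mu\|_{L^\infty}<1$ through a careful truncation/regularization to absorb all the unbounded gradient contributions.
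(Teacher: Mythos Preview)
Your treatment of item (1) matches the paper's: invoke the comparison principle (Theorem~\ref{comparison}). One small gap is that Theorem~\ref{comparison} requires $u_1,u_2\in C(\Omega)\cap W^{1,N}_{\mbox{\tiny loc}}(\Omega)$, which does not come for free from $H^1_0(\Omega)\cap L^\infty(\Omega)$; the paper checks this via Lemma~\ref{continuouslemma}, Lemma~\ref{reglemma} and Remark~\ref{strongsolremark}. Also note that in (2) you write ``finite energy solution'' but the statement rules out \emph{any} solution in the sense of Definition~\ref{conceptofsolution}; fortunately your argument only uses continuity up to the boundary and $H^1_{\mbox{\tiny loc}}$-regularity, both of which hold for general solutions.

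Your approach to item (2) is correct and genuinely different from the paper's. You exploit the pointwise transformation $v=u^{1-\tau}$ on the annular region $D=\Omega\setminus\overline\omega$, obtain $-\Delta v\geq 0$ from $\mu\geq\tau$ and $h\equiv 0$, and then contradict the finiteness of $v$ via a family of harmonic barriers using that $v\to+\infty$ uniformly at $\partial\Omega$. The paper instead deduces (2) from the more general Theorem~\ref{nonexistence}, whose proof is an integral argument: testing the equation with a carefully designed $\varphi_\varepsilon(u)$ and invoking the result of \cite{ZWQ} that $\int_\Omega|\nabla u|^2/u^2=+\infty$, one shows that any solution must satisfy $\int_\Omega|h|/u=+\infty$, which is impossible when $h$ has compact support. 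Your route is more elementary and self-contained (no external Liouville-type input), while the paper's route yields a sharper necessary condition valid for data $h$ that need not vanish near $\partial\Omega$, and works under the weaker hypothesis $sg(x,s)\geq\tau$ only for small $s$ rather than the exact form $\mu(x)/s$.
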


We point out that existence and uniqueness results for problem \eqref{modelfxprob} are known for general nonnegative $\mu\in L^\infty(\Omega)$ (i.e. without assuming that $\|\mu\|_{L^\infty(\Omega)}< 1$) provided $h$ is locally bounded away from zero (see \cite{A6} for the existence and \cite{CL} for the uniqueness). Thus, we clarify that the condition $h\equiv 0$ near the boundary in item \eqref{item2nonexistence} of Theorem~\ref{fxmodelthm} is also natural for having nonexistence of solution.  

Once we have shown that problem \eqref{modelfxprob} is well-posed, we will be able to define a compact operator whose fixed points are solutions to \eqref{lambdaproblem} (see Section~\ref{existence}). A version of a result in \cite{Kral} (see \cite{deFigLiNuss}) will assure the existence of a fixed point of the operator. 

As it is mandatory for fixed point theorems, we will prove the existence of a priori estimates on the solutions to a problem related to \eqref{lambdaproblem}. To this task, we will adapt the blow-up method due to \cite{GidasSpruck}. Roughly speaking, this technique consists of assuming by contradiction that there exists a sequence of solutions whose norms blow up as $n$ tends to infinity. The conclusion follows by passing to the limit in a problem satisfied by a certain normalized sequence. In fact, the limit function is a solution to a problem which, however, does not admit any solution by virtue of some Liouville type result. Therefore, one gets a contradiction, so any sequence of solutions must be bounded. In this context, the difficulties that we find are twofold. Firstly, the normalized sequence, say $\{v_n\}$, satisfies an equation having a lower order term of type $\frac{|\nabla v_n|^2}{v_n+\delta_n}$, where $0\leq\delta_n\to 0$ as $n\to+\infty$. If we aim to pass to the limit, then we need to find positive lower bounds on $\{v_n\}$, otherwise the lower order term may blow up as $n\to+\infty$. And lastly, we arrive to a limit problem, having a quadratic gradient lower order term of the form $\frac{|\nabla v|^2}{v}$, for which nonexistence Liouville type results are not known in the literature (a non-exhaustive list of references for Liouville type results about problems depending on the gradient is \cite{FMRS, 
FilippucciPucciSouplet, Lions2, Montoro, PorrettaVeron, SerrinZou, Souplet2}). 

We overcome the first of the difficulties by proving H\"older estimates in spite of the singular quadratic term. The proof follows the ideas of \cite{LU}, which have been widely used for singular problems (see \cite{A6, AM, CLLM, CLM, Durastanti, L, MA}, among others). We will show that these estimates yield in turn positive lower estimates from below and this will be enough to pass to the limit. Regarding the second difficulty, we observe that the limit equation does admit a convenient change of unknown which gets rid of the gradient term, so that we may apply classical Liouville type results (see Section~\ref{estimates} below).

We state here the main existence result for problem \eqref{model1} in the case $\gamma=1$. 

\begin{theorem}\label{modelthm1}
Let $p>1$, $\gamma=1$, $\delta\geq 0$ and $\mu\in L^\infty(\Omega)$. The following statements hold true:
\begin{enumerate}
\item If $\mu\in C(\overline{\Omega})$ and there exist two real numbers $\tau,\sigma$ such that $2\sigma-1<\tau\leq\sigma<\frac{2^*-1-p}{2^*-2}$ and $\tau\leq\mu(x)\leq\sigma$ for all $x\in\overline{\Omega}$, then there exists at least a solution to \eqref{model1} for every $\lambda>0$. If, in addition, either $\mu\geq 0$ or $\delta>0$, then there exists at least a finite energy solution to \eqref{model1} for every $\lambda>0$.
\item If $\delta=0$ and there exist an open domain $\omega\subset\subset\Omega$ and a constant $\tau>1$ such that $\mu(x)\geq\tau$ for a.e. $x\in\Omega\setminus\omega$, then problem \eqref{model1} admits no solution for any $\lambda>0$.
\end{enumerate}  
\end{theorem}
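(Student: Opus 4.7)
My plan is to prove the existence part via a Leray--Schauder fixed point procedure whose a priori bound is produced by a blow-up argument, and to establish the nonexistence part by adapting the boundary obstruction of \cite[Lemma 2.5]{CMS}.

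For (1), I define a compact operator $T$ on $C(\overline{\Omega})$ by $T(w) = u$, where $u$ is the finite energy solution to
\[
-\Delta u + \mu(x)\frac{|\nabla u|^2}{u+\delta} = \lambda(w^{+} + \varepsilon)^{p}
\]
given by Theorem~\ref{fxmodelthm} when $\delta = 0$, and by the non-singular analogue in \cite{A6} when $\delta>0$; the small regularization $\varepsilon > 0$ keeps the right-hand side bounded away from zero. Compactness of $T$ will follow from the H\"older estimates for singular problems recalled in the introduction (see \cite{LU}). The task is then to establish a uniform a priori $L^{\infty}$ bound on every solution of the homotopy $u = s\,T(u)$, $s \in [0,1]$, by blow-up. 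Given solutions $u_{n}$ of
\[
-\Delta u_{n} + \mu(x)\frac{|\nabla u_{n}|^{2}}{u_{n}+\delta} = s_{n}\lambda u_{n}^{p},\quad s_{n}\in[0,1],
\]
with $M_{n} := \|u_{n}\|_{\infty} \to +\infty$ attained at $x_{n} \in \overline{\Omega}$, I set $v_{n}(y) := u_{n}(x_{n} + \alpha_{n} y)/M_{n}$ with the standard scaling $\alpha_{n} := (s_{n}\lambda M_{n}^{p-1})^{-1/2}$. A direct calculation gives
\[
-\Delta v_{n} + \mu(x_{n} + \alpha_{n} y)\frac{|\nabla v_{n}|^{2}}{v_{n} + \delta/M_{n}} = v_{n}^{p}
\]
on a domain exhausting either $\mathbb{R}^{N}$ (interior case) or a half-space (boundary case). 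Using H\"older regularity to secure uniform positive lower bounds on $v_{n}$ on every compact set, I pass to the limit and obtain a nonnegative $v$ with $v(0)=1$ solving
\[
-\Delta v + \mu_{\infty}\,\frac{|\nabla v|^{2}}{v} = v^{p},\qquad \mu_{\infty}\in[\tau,\sigma],
\]
on the whole space or on a half-space with zero boundary values. The substitution $z := v^{1-\mu_{\infty}}/(1-\mu_{\infty})$, legitimate because $\mu_{\infty}\leq\sigma<1$, transforms this into the Lane--Emden equation
\[
-\Delta z = (1-\mu_{\infty})^{(p-\mu_{\infty})/(1-\mu_{\infty})}\, z^{(p-\mu_{\infty})/(1-\mu_{\infty})},
\]
whose exponent is subcritical exactly under the hypothesis $\sigma < (2^{*}-1-p)/(2^{*}-2)$; by Gidas--Spruck \cite{GidasSpruck} and its half-space analogue, $z\equiv 0$, contradicting $v(0)=1$. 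The strict inequality $2\sigma-1<\tau$ will enter in the boundary case, where a flattening-and-reflection step is required: the reflected coefficient stays strictly below $1$, and the subcritical reduction persists, precisely under this inequality. With the a priori estimate in hand, the de~Figueiredo--Lions--Nussbaum fixed point theorem \cite{deFigLiNuss,Kral} produces a nontrivial fixed point of $T$, hence a solution of \eqref{model1}.

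For (2), I proceed as in \cite[Lemma 2.5]{CMS}. Assuming a solution $u$ exists, on $\Omega\setminus\overline{\omega}$ I consider $W := -u^{1-\tau}/(\tau-1)$; since $\tau>1$ and $u$ vanishes on $\partial\Omega$, $W$ diverges to $-\infty$ on $\partial\Omega$, while a direct computation shows
\[
-u^{\tau}\,\Delta W = \lambda u^{p} - (\mu-\tau)\,\frac{|\nabla u|^{2}}{u}\qquad\text{in }\Omega\setminus\overline{\omega}.
\]
Combining the boundary blow-up of $W$ with the sign information coming from $\mu\geq\tau$ and a comparison with the harmonic extension of $W|_{\partial\omega}$ to $\Omega\setminus\overline{\omega}$ carrying zero data on $\partial\Omega$ produces the contradiction.

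The technical heart of the argument is the blow-up step in Part~(1): the singular term $|\nabla v_{n}|^{2}/(v_{n}+\delta/M_{n})$ threatens to blow up wherever $v_{n}$ approaches zero, so one must produce uniform positive lower bounds on $v_{n}$ on every compact set. Obtaining such H\"older/Harnack-type estimates in the presence of the variable, possibly sign-changing coefficient $\mu$ is the main obstacle, and the actual new technical content of the proof.
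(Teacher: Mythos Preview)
Your blow-up argument and the Liouville reduction via $z=v^{1-\mu_\infty}$ match the paper's Proposition~\ref{gidasspruck} and Lemma~\ref{liouvillelemma}, and your approach to~(2) is in the spirit of Theorem~\ref{nonexistence}. But part~(1) has a genuine gap in the fixed-point scheme.

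You describe a Leray--Schauder homotopy $u=s\,T(u)$ and a blow-up bound on a large sphere; this yields $\deg(I-T,B_R,0)=1$, hence a fixed point $u_\varepsilon$ of $T$, but nothing prevents $u_\varepsilon\to 0$ as $\varepsilon\to 0$ (for $-\Delta u=\lambda(u+\varepsilon)^p$ the minimal positive solution does exactly that, and your proposal never sends $\varepsilon\to 0$ anyway). Invoking \cite{deFigLiNuss,Kral} at the end does not repair this, because that framework requires a \emph{second} homotopy in a parameter $t\geq 0$ through problems of the form $-\Delta u+g(x,u)|\nabla u|^2=\lambda f(u)+tu^\sigma$ (the paper's $(P^t)$), together with three ingredients: (i)~no solutions of $u=sK(u,0)$ on a \emph{small} sphere, which uses $f(s)/s\to 0$ at $0$; (ii)~the blow-up bound on a large sphere uniformly in $t$; and (iii)~nonexistence for $t$ large (Proposition~\ref{testimate}, obtained by testing with $\varphi_1/u^\sigma$). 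Your proposal supplies only~(ii). The exponent $\sigma$ in the forcing $tu^\sigma$ is not arbitrary: it is the same $\sigma$ bounding $sg(x,s)$ from above, and this is what makes the test-function computation for~(iii) close. Incidentally, the equation you attribute to solutions of $u=sT(u)$ is not what that relation actually gives; the $s$ enters the singular denominator as well.

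Two further points. Your operator $T$ is built on Theorem~\ref{fxmodelthm}, which requires $\mu\geq 0$, while the statement allows $\tau<0$; the paper treats sign-changing $\mu$ only \emph{after} the nonnegative case, via the substitution $u=(v+\delta^\gamma)^{1/\gamma}-\delta$ with $\gamma$ large (Step~3 of the proof of Theorem~\ref{mainthm1}), which makes the transformed lower-order coefficient $g_\gamma$ nonnegative. And your reading of the condition $2\sigma-1<\tau$ is off: it has nothing to do with reflection. It is precisely the inequality $\frac{1-\tau}{2}<1-\sigma$ guaranteeing that an exponent $\gamma\in\bigl(\frac{1-\tau}{2},1-\sigma\bigr]$ exists for which the H\"older estimate of Lemma~\ref{holderestimatelemma} on the rescaled sequence $v_n$ can be carried out; without it you cannot obtain the uniform local lower bounds on $v_n$ that allow you to pass to the limit through the singular gradient term.
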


Note that, in the first item of Theorem~\ref{modelthm1}, $\mu$ is allowed to change sign unless  $p\geq 2^*-1$, in which case $\mu$ is necessarily negative. We also point out that the smallness condition $\sigma<\frac{2^*-1-p}{2^*-2}$ in Theorem~\ref{modelthm1} is natural since, in fact, problem \eqref{model1} has no bounded solutions provided $\gamma=1$, $\delta=0$, $\mu\equiv \text{constant}\in\left[\frac{2^*-1-p}{2^*-2},1\right)$ and $\Omega$ is starshaped (see Remark~\ref{pohozaevremark} below). Moreover, we will show later that, strengthening the smallness condition conveniently (in terms of $p,N$), one may assume $\mu$ to be either continuous only in a neighborhood of $\partial\Omega$, or merely bounded in $\Omega$. Also about the existence part of the theorem, it is worth to point out that we need to control $\mu$ from below in order to prove the H\"older estimates that we mentioned. However, if $\mu$ is constant, i.e. $\sigma=\tau$, then the condition $2\sigma-1<\tau$ becomes $\sigma=\tau<1$, which means no restriction since $\sigma<\frac{2^*-1-p}{2^*-2}<1$. On the other hand, we stress that the case $\mu\equiv 1$ and $\delta=0$ remains unsolved, i.e. there are neither existence nor nonexistence results about problem \eqref{model1} for $\gamma=1, \delta=0, \mu\equiv 1, p>1$ in the literature.

Next result shows that our approach allows also to prove existence for $\gamma>1$ and for all $\lambda>0$. 

\begin{theorem}\label{modelthm2}
Let $p>1$, $\gamma>1$, $\delta\geq 0$ and $\mu\in L^\infty(\Omega)$. The following statements hold true:
\begin{enumerate}
\item If $\delta>0$ and $\mu$ satisfies 
\begin{equation}\label{musmallcond}
2\|\mu^+\|_{L^\infty(\Omega)}+\|\mu^-\|_{L^\infty(\Omega)}<\delta^{\gamma-1},
\end{equation}
then there exists at least a finite energy solution to \eqref{model1} for every $\lambda>0$. 
\item If $\delta=0$ and there exist an open domain $\omega\subset\subset\Omega$ and a constant $\tau>0$ such that $\mu(x)\geq\tau$ for a.e. $x\in\Omega\setminus\omega$, then problem \eqref{model1} admits no solution for any $\lambda>0$.
\end{enumerate}
\end{theorem}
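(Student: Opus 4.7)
The plan is to combine the fixed-point scheme of Section~\ref{existence} with the blow-up a priori estimates of Section~\ref{estimates} for item~(1), and to adapt the argument of \cite[Lemma~2.5]{CMS} (cf.\ Theorem~\ref{fxmodelthm}(2)) for item~(2). For item~(1), the key observation is that since $\gamma>1$ and $\delta>0$, the singular quadratic term can be rewritten as
\[
\mu(x)\frac{|\nabla u|^2}{(u+\delta)^\gamma}=\widehat\mu(x,u)\,\frac{|\nabla u|^2}{u+\delta},\qquad \widehat\mu(x,u):=\frac{\mu(x)}{(u+\delta)^{\gamma-1}},
\]
with the uniform bounds $-\|\mu^-\|_\infty/\delta^{\gamma-1}\leq\widehat\mu(x,u)\leq \|\mu^+\|_\infty/\delta^{\gamma-1}$ for every $u\geq 0$. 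Setting $\sigma:=\|\mu^+\|_\infty/\delta^{\gamma-1}$ and $\tau:=-\|\mu^-\|_\infty/\delta^{\gamma-1}$, condition \eqref{musmallcond} is precisely the pinching $2\sigma-1<\tau$ (hence in particular $\sigma<1/2$), which plays here the role of the analogous condition in Theorem~\ref{modelthm1}(1).

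With this rewriting, I would define a compact operator $S\colon w\mapsto u$, where $u$ is the finite-energy solution of the intermediate problem $-\Delta u+\widehat\mu(x,w)|\nabla u|^2/(u+\delta)=\lambda w^p$ in $\Omega$ with $u=0$ on $\partial\Omega$; this problem is well-posed by the non-singular analog of Theorem~\ref{fxmodelthm}(1) thanks to $\|\widehat\mu(\cdot,w)\|_\infty<1$ and $u+\delta\geq\delta>0$, and its fixed points are solutions of \eqref{model1}. The a priori bounds needed to invoke the degree-theoretic fixed-point theorem come from the blow-up scheme of Section~\ref{estimates}: under the standard rescaling $v_n(y)=u_n(x_n+y\,M_n^{-(p-1)/2})/M_n$ with $M_n=\|u_n\|_\infty\to\infty$, the quadratic term becomes $\mu(\cdot)\,M_n^{1-\gamma}|\nabla v_n|^2/(v_n+\delta/M_n)^\gamma$, whose coefficient vanishes because $\gamma>1$; the limit problem is therefore the classical $-\Delta v=\lambda_0 v^p$ on $\mathbb{R}^N$ (or on a half-space for boundary blow-up), ruled out by the Gidas-Spruck Liouville theorem. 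The H\"older and strict-positivity estimates carry over verbatim, in fact becoming easier because $\delta>0$ makes the gradient term non-singular.

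For item~(2), I would argue by contradiction following \cite[Lemma~2.5]{CMS}, as in the proof of Theorem~\ref{fxmodelthm}(2). Assuming that $u$ solves \eqref{model1} with $\delta=0$, I would pick a smooth cutoff $\eta$ supported in $\Omega\setminus\overline\omega$ with $\eta\equiv 1$ on a strip attached to $\partial\Omega$, and test the equation against a truncation of $\eta\,G_\varepsilon(u)$ with $G_\varepsilon'(s)=(s+\varepsilon)^{-\gamma}$. Using $\mu\geq\tau>0$ on $\operatorname{supp}\eta$ and letting $\varepsilon\to 0^+$, I would derive the integrability $\tau\int|\nabla u|^2\,u^{-\gamma}\eta<+\infty$, which is incompatible with the Hopf-type bound $u\geq c\,d(\cdot,\partial\Omega)$ forced by elliptic regularity of $u$ up to $\partial\Omega$ as soon as $\gamma\geq 1$.

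The main obstacle in item~(1) is that the effective coefficient $\widehat\mu(x,u)$ depends on the unknown itself; decoupling through $w$ in the fixed-point scheme is what keeps the intermediate problem within the scope of Theorem~\ref{fxmodelthm}, but it forces every estimate (H\"older, energy, blow-up) to be uniform with respect to $w$, and condition \eqref{musmallcond} is exactly what delivers this uniformity. For item~(2), the delicate point is to push the test-function argument to work with merely $\tau>0$, instead of the $\tau>1$ needed for $\gamma=1$ in Theorem~\ref{modelthm1}(2); this is afforded by the stronger singularity of $\gamma>1$, which forces the integral of $|\nabla u|^2 u^{-\gamma}$ to diverge at $\partial\Omega$ regardless of the size of $\mu$.
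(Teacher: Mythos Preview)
Your treatment of item~(1) coincides with the paper's: it verifies that $g(x,s)=\mu(x)/(s+\delta)^\gamma$ satisfies \eqref{sgsmallcond} with precisely your $\sigma=\|\mu^+\|_\infty/\delta^{\gamma-1}$, $\tau=-\|\mu^-\|_\infty/\delta^{\gamma-1}$ (so \eqref{musmallcond} reads $2\sigma-1<\tau$), and \eqref{unifconv} with limit identically zero (because $\gamma>1$ kills the coefficient at infinity), then applies Theorem~\ref{mainthm1}. Your fixed-point/blow-up outline is an unpacking of that theorem.

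For item~(2), however, there is a genuine gap. The Hopf-type bound $u\geq c\,d(\cdot,\partial\Omega)$ you invoke is not available: by Definition~\ref{conceptofsolution} a solution is only known to satisfy $u\in H^1_{\mbox{\tiny loc}}(\Omega)\cap C(\overline\Omega)$ with $u^{\gamma'}\in H^1_0(\Omega)$ for some $\gamma'>0$, and the singular quadratic term prevents both $C^1$ regularity up to $\partial\Omega$ and any direct application of Hopf's lemma. Without that lower bound the claimed incompatibility disappears. There is also a test-function issue: since your cutoff $\eta$ equals $1$ on a strip at $\partial\Omega$, $\eta\,G_\varepsilon(u)$ has no compact support in $\Omega$, hence by Remark~\ref{testfunctionsremark} it is admissible only when $u\in H^1_0(\Omega)$, which is not assumed. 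The paper proceeds differently: since $\gamma>1$, one has $sg(x,s)=\mu(x)s^{1-\gamma}\to+\infty$ as $s\to0^+$ on $\Omega\setminus\omega$, so \eqref{gtoolarge} holds for some $\tau'>1$ and Theorem~\ref{nonexistence} applies, forcing $\int_\Omega\lambda u^{p}/u=\lambda\int_\Omega u^{p-1}=+\infty$, which contradicts $u\in L^\infty(\Omega)$. That theorem's proof works with $w=u^{\gamma'}\in H^1_0(\Omega)$ (so all test functions are globally admissible) and concludes via the identity $\int_\Omega|\nabla w|^2/w^2=+\infty$ from \cite{ZWQ}, rather than via any pointwise boundary estimate.
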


Notice that, if $\mu\equiv\text{constant}>0$, then condition \eqref{musmallcond} becomes $\mu<\frac{\delta^{\gamma-1}}{2}$. It is known that such a smallness condition is not needed if $\mu$ is a positive constant (see \cite[Theorem~{1.2}]{OrsinaPuel}). Therefore, we presume that the assumption \eqref{musmallcond} is technical, even though we cannot avoid it since it is used to prove the H\"older estimates mentioned above. Concerning the nonexistence statement for the singular case $\delta=0$, it is proven again following closely \cite[Lemma~{2.5}]{CMS}. We stress that the fact that the singularity is strong, namely $\gamma>1$, allows to take $\mu>0$ near $\partial\Omega$ (in contrast to the case $\gamma=1$, for which $\mu>1$ near the boundary was needed). 

We will be able to go beyond Theorem~\ref{modelthm1} and Theorem~\ref{modelthm2} and deal with any $\gamma>0$. Indeed, for $\gamma\in (0,1)$ and $\delta\geq 0$, we will show that existence of solution holds for every $\lambda>0$ large enough.  We will also deal with $\gamma\geq 1, \delta>0$ and $\mu$ a bounded function with arbitrary size, i.e. we remove the size restrictions on $\mu$ in Theorem~\ref{modelthm1} and Theorem~\ref{modelthm2} as long as $\delta>0$, even though $\lambda$ must be large. The statement of the result is the following.

\begin{theorem}\label{modelthm3}
Let $p>1$, $\gamma>0$, $\delta\geq 0$ and $\mu\in L^\infty(\Omega)$. If $\gamma\geq 1$, assume in addition that $\delta>0$. Assume also one of the following two conditions:
\begin{enumerate}
\item $p<\frac{N+1}{N-1}$, 
\item $p<2^*-1$ and $0\leq\mu\in C(\overline{\Omega})$.
\end{enumerate} 
Then, there exists $\lambda_0>0$ such that there exists at least a solution $u_\lambda$ to \eqref{model1} for every $\lambda>\lambda_0$ satisfying $\lim_{\lambda\to+\infty}\|u_\lambda\|_{L^\infty(\Omega)}=0.$ Moreover, if either $\mu\geq 0$ or $\delta>0$, then $u_\lambda$ is a finite energy solution. 
\end{theorem}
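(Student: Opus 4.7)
The plan is to produce the small solutions for large $\lambda$ by a Leray--Schauder degree argument in which the scaling of the nonlinearity $\lambda u^p$ forces $\|u\|_{L^\infty(\Omega)}$ to $0$. The substitution $u=\lambda^{-1/(p-1)}v$ converts \eqref{model1} into
\begin{equation*}
-\Delta v+\mu(x)\,\frac{\lambda^{-1/(p-1)}|\nabla v|^2}{\bigl(\lambda^{-1/(p-1)}v+\delta\bigr)^\gamma}=v^p\quad\text{in }\Omega,\qquad v=0\text{ on }\partial\Omega,
\end{equation*}
and on bounded sets of $v$ the gradient coefficient is a vanishing perturbation as $\lambda\to+\infty$: when $\delta>0$ it behaves like $\mu(x)\lambda^{-1/(p-1)}\delta^{-\gamma}$, while when $\delta=0$ the standing hypothesis forces $\gamma<1$ and the coefficient is $\mu(x)\lambda^{-(1-\gamma)/(p-1)}v^{-\gamma}$, which also vanishes. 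Hence heuristically one is perturbing the subcritical semilinear equation $-\Delta v=v^p$, for which existence is classical.

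To formalize this I would introduce a compact operator $T_\lambda$ on a suitable positive cone of $\cov$ sending $w\geq 0$ to the unique finite energy solution $u=T_\lambda(w)$ of the auxiliary problem
\begin{equation*}
-\Delta u+\mu(x)\,\frac{|\nabla u|^2}{(u+\delta)^\gamma}=\lambda(w^+)^p\quad\text{in }\Omega,\qquad u=0\text{ on }\partial\Omega.
\end{equation*}
When $\gamma=1$ well-posedness is precisely Theorem~\ref{fxmodelthm}; in the remaining ranges it is obtained by the H\"older regularity machinery of \cite{LU} sketched in the introduction, which furthermore supplies a strict interior positivity in the singular case $\delta=0$. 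Standard elliptic regularity then promotes $T_\lambda$ to a compact self-map of the cone whose fixed points are exactly the solutions of \eqref{model1}.

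The nontrivial fixed point is extracted by comparing the Leray--Schauder degrees on a small and a large ball. On a small ball around $0$ the superlinearity of the source term makes $T_\lambda$ a higher order perturbation of the identity, so $0$ is an isolated fixed point with degree $1$. On a large ball the degree is shown to be $0$ by a homotopy to a problem without positive solutions, and the crucial input is a uniform a priori bound $\|v\|_{L^\infty(\Omega)}\leq C$ for the rescaled solutions, which is obtained by the blow-up scheme of \cite{GidasSpruck}. Assuming by contradiction $\|v_n\|_{L^\infty(\Omega)}\to+\infty$ along some $\lambda_n$, one rescales $v_n$ around its maximum point; the H\"older estimates and the interior positivity enable the passage to the limit and produce a positive bounded solution of $-\Delta V=V^p$ on either $\re^N$ or a half-space with Dirichlet condition. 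Under hypothesis (1), $p<(N+1)/(N-1)$ rules out both limits by the Gidas--Spruck and Gidas--Spruck--Serrin Liouville theorems; under (2), $p<2^*-1$ handles the interior case while the continuity and nonnegativity of $\mu$ up to $\partial\Omega$ permit a boundary flattening and a barrier construction excluding boundary blow-up. The difference of the degrees then yields a fixed point in the annulus, i.e.\ a solution $u_\lambda$ of \eqref{model1}, and the a priori estimate gives $\|u_\lambda\|_{L^\infty(\Omega)}\leq C\lambda^{-1/(p-1)}\to 0$; the finite energy property when $\mu\geq 0$ or $\delta>0$ is inherited directly from $T_\lambda$.

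The hardest step I anticipate is the blow-up analysis in the singular case $\delta=0$, where the quadratic gradient term $\mu(x)|\nabla v_n|^2/(v_n)^\gamma$ is genuinely singular. Passing to the limit in it requires both compactness of $|\nabla v_n|$ and a uniform lower bound on the rescaled profile at its concentration point, so that the H\"older estimates with strict positivity sketched in the introduction become essential; they must then be shown to interact cleanly with the boundary barrier argument needed to exclude boundary blow-up in case (2).
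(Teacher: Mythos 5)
Your overall strategy (define a fixed-point operator, obtain an a priori bound by blow-up, invoke Liouville theorems, and conclude that $\|u_\lambda\|_{L^\infty(\Omega)}\leq C\lambda^{-1/(p-1)}$) is the same as the paper's, but two essential structural ingredients are missing, and without them the argument does not close.

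First, the operator $T_\lambda$ is not known to be well-defined under the hypotheses of the theorem. You freeze only the source term, leaving the original singular gradient term $\mu(x)|\nabla u|^2/(u+\delta)^\gamma$ in $u$. Theorem~\ref{fxmodelthm} gives existence and uniqueness only for $\gamma=1$, $\delta=0$ and $\|\mu\|_{L^\infty(\Omega)}<1$; the present theorem allows arbitrary $\|\mu\|_{L^\infty(\Omega)}$ (case 1) or arbitrary nonnegative $\mu\in C(\overline\Omega)$ (case 2), and all $\gamma>0$. The ``H\"older regularity machinery of \cite{LU}'' gives interior estimates, not existence or uniqueness, so it cannot substitute. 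The paper avoids this by a different linearisation: in \eqref{auxprob} it freezes the product $(v+\delta)g(x,v)$ and keeps only the factor $1/(u+\delta)$ in the unknown, so the auxiliary problem has exactly the form of \eqref{modelfxprob} with a known coefficient $(v(x)+\delta)g(x,v(x))$ whose sup norm is $<1$ under \eqref{sgsmallcond}. That is what makes Theorem~\ref{fxmodelthm} (and Theorem~\ref{comparison}) applicable.

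Second, and more importantly, the effective coefficient $(s+\delta)g(x,s)=\mu(x)(s+\delta)^{1-\gamma}$ is in general not below $1$ (for $\gamma<1$, $\delta=0$ it is even unbounded as $s\to\infty$; for $\gamma\geq 1$, $\delta>0$ small, or $\mu$ large, its sup can exceed $1$), so condition \eqref{sgsmallcond} fails for the original $g$. The paper resolves this by truncating $g$ and $f$ at a small level $s_0$ (this is the content of Theorems~\ref{mainthm4} and \ref{mainthm5}): the truncated $\bar g$ satisfies the smallness \eqref{sgsmallcondtilde} resp.\ \eqref{sguniflimitzero} because $sg(x,s)\to 0$ as $s\to 0$, so the fixed-point machinery applies to the truncated problem, and then the $\lambda$-explicit a priori bound from Proposition~\ref{gidasspruck4} resp.\ \ref{gidasspruck5} shows $\|u_\lambda\|_{L^\infty(\Omega)}<s_0$ for $\lambda$ large, whence $u_\lambda$ solves the original problem. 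Your rescaling heuristic records the expected smallness of $u_\lambda$ but is not a substitute for the truncation: it does not make the operator well-posed, nor does it control the gradient term during the blow-up. Relatedly, in the blow-up limit the gradient term does not in general disappear; the paper either keeps only the supersolution inequality and uses Lemma~\ref{liouvillelemmasupersol} (case 1), or passes to a limit equation with a residual gradient term and applies Lemma~\ref{liouvillelemma} after a change of unknown (case 2). Your proposal to reach the clean limit problem $-\Delta V=V^p$ is not justified.
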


If $\gamma\in (0,1)$, last result is consistent with the known results for $\mu\equiv\text{ constant}>0$ which assure nonexistence for $\lambda>0$ small (see \cite{OrsinaPuel} for $\delta>0$ and \cite{CMS} for $\delta=0$). On the contrary, as we pointed out above, for $\gamma>1$ we would expect an existence result for every $\lambda>0$ (even for $\lambda$ small) without size restrictions on $\mu$. As far as the case $\gamma=1$ and $\delta>0$ is concerned, we ignore whether an existence result for $\lambda>0$ small and general $\mu$ should be expected or not. Two exceptions are the ranges $\mu<\frac{2^*-1-p}{2^*-2}$ and $\mu\geq p$, for which existence (see Theorem~\ref{modelthm1} above and \cite{LiYinKe}) and nonexistence (see \cite{OrsinaPuel, ACMA}) for $\lambda>0$ small are known respectively. In other words, the existence of solution to \eqref{model1} for $\gamma=1$, $\delta>0$, $\frac{2^*-1-p}{2^*-2}\leq \mu<p$ and $\lambda>0$ small remains as an open problem, even for $\mu\equiv\text{ constant}>0$. 

We organize the paper as follows. In Section~\ref{mainresults} we introduce the conditions on $f,g$ and the statements of the general results about problem \eqref{lambdaproblem}. Section~\ref{estimates} is devoted to proving some Liouville type results as well as several propositions that provide the estimates via the blow-up method. In Section~\ref{existence} we prove the results that we state in Section~\ref{mainresults} and in the Introduction. Finally, in the Appendix we gather some technical results that are required throughout the paper.

\vspace{0.5 cm}

\noindent{\bf Acknowledgments.} The problems considered in this work have been proposed by T. Leonori. The research was initiated with his collaboration during one week in Granada and another week in Rome, when most of the ideas contained here emerged. Moreover, for the conclusion of the paper, the interesting ideas, comments and corrections by J. Carmona have also supposed a more than remarkable contribution. In any case, their supervision and support have been essential during the research period. This is why the author wants to warmly thank both, collaborators and friends, for being examples of altruism to follow in this competitive and sometimes hostile world of research. The author wants to thank also D. Ruiz and C. De Coster for their kind and useful suggestions and corrections.


\section{Hypotheses and main results}\label{mainresults}

Let us consider a continuous function $f:[0,+\infty)\to[0,+\infty)$ and a Carath\'eodory function $g:\Omega\times (0,+\infty)\to\mathbb{R}$. We will assume throughout this paper that $g$ satisfies the following condition:
\begin{equation}
\label{ggeneral}
\exists g_0\in C((0,+\infty)): |g(x,s)|\leq g_0(s)\quad\text{a.e. }x\in\Omega,\,\,\forall s> 0.
\end{equation} 
Last hypothesis is essentially the minimal condition that $g$ must satisfy so that the weak formulation of \eqref{lambdaproblem} is well defined:

\begin{definition}\label{conceptofsolution}
Let $f:[0,+\infty)\to[0,+\infty)$ be a continuous function and $g:\Omega\times (0,+\infty)\to\mathbb{R}$ be a Carath\'eodory function satisfying \eqref{ggeneral}. A \emph{solution} to \eqref{lambdaproblem} is a function $u\in H^1_{\mbox{\tiny loc}}(\Omega)\cap L^\infty(\Omega)$ such that
\begin{equation*}
\begin{cases}
& \displaystyle \forall\omega\subset\subset\Omega,\,\,\exists c>0:\quad u(x)\geq c\text{ a.e. }x\in\omega,
\\
&\exists \gamma>0:\quad u^\gamma\in H_0^1(\Omega),\text{ and }
\\
& \displaystyle\int_\Omega\nabla u\nabla \phi+\int_\Omega g(x,u)|\nabla u|^2\phi=\int_\Omega f(u)\phi\quad\forall \phi\in C_c^1(\Omega).
\end{cases} 
\end{equation*}
Besides, we will say that $u$ is a \emph{finite energy solution} to \eqref{lambdaproblem} if it is a solution to \eqref{lambdaproblem} with $\gamma=1$, i.e. if $u\in H_0^1(\Omega)$.
\end{definition}

\begin{remark}\label{testfunctionsremark}
It can be proven by following \cite[Appendix]{CLLM} that, in the previous definition, one can take test functions $\phi\in H_0^1(\Omega)\cap L^\infty(\Omega)$ with compact support. Moreover, if $u$ is a finite energy solution, then one can take any test function belonging to $H_0^1(\Omega)\cap L^\infty(\Omega)$.
\end{remark}

Let us fix $\sigma\in\mathbb{R}$. The following condition, stronger than \eqref{ggeneral}, means a more precise control on $g$ from above:
\begin{equation}\label{sgsmallcond2}
\exists g_0\in C((0,+\infty)):\quad -g_0(s)\leq g(x,s)\leq\frac{\sigma}{s}\quad\text{a.e. }x\in\Omega,\,\,\forall s>0.
\tag{$g_1$}
\end{equation}

Let us consider also the monotonicity condition:
\begin{equation}\label{nondecrasingcond}
s\mapsto sg(x,s)\,\,\text{ is nondecreasing for a.e. }x\in\Omega.
\tag{$g_\nearrow$}
\end{equation}

We state next a comparison principle that will be the key for proving the uniqueness part of Theorem~\ref{fxmodelthm}. The proof follows essentially the arguments in \cite{CLLM}, which in turn are inspired by \cite{ACJT2}.

\begin{theorem}\label{comparison}
Let $0\lneq h\in L^1_{\mbox{\tiny loc}}(\Omega)$ and let $g:\Omega\times (0,+\infty)\to\mathbb{R}$ be a Carath\'eodory function satisfying \eqref{nondecrasingcond} and \eqref{sgsmallcond2} for some $\sigma\in (0,1)$. Let $u, v\in C(\Omega)\cap W^{1,N}_{\mbox{\tiny loc}}(\Omega)$, with $u,v>0$ in $\Omega$, be such that
\begin{align}
\label{compsubsol}
&\int_\Omega \nabla u\nabla\phi+\int_\Omega g(x,u)|\nabla u|^2\phi\leq \int_\Omega h(x)\phi\quad\text{ and }
\\
\label{compsupersol}
&\int_\Omega \nabla v\nabla\phi+\int_\Omega g(x,v)|\nabla v|^2\phi\geq \int_\Omega h(x)\phi
\end{align}
for every $0\leq \phi\in H_0^1(\Omega)\cap L^\infty(\Omega)$ with compact support. 
Suppose also that the following boundary condition holds:
\begin{equation}\label{boundarycond}
\limsup_{x\to x_0}(u(x)^{1-\sigma}-v(x)^{1-\sigma})\leq 0 \quad \forall x_0\in\partial\Omega.
\end{equation}
Then, $u\leq v$ in $\Omega$. 
\end{theorem}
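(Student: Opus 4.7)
The plan is to reduce the comparison to a Kato-type inequality by means of the change of unknown $U:=u^{1-\sigma}$, $V:=v^{1-\sigma}$. Since $u,v>0$ are continuous and locally in $W^{1,N}$, both $U$ and $V$ lie in $C(\Omega)\cap W^{1,N}_{\mbox{\tiny loc}}(\Omega)$, with $\nabla U=(1-\sigma)u^{-\sigma}\nabla u$ and analogously for $V$. I would first test \eqref{compsubsol} with $\psi u^{-\sigma}$ (which is admissible because $u$ stays positive on compact subsets of $\Omega$) and integrate by parts, obtaining, for every nonnegative $\psi\in H^1_0(\Omega)\cap L^\infty(\Omega)$ with compact support,
\[
\int_\Omega\nabla U\cdot\nabla\psi+\frac{1}{1-\sigma}\int_\Omega\frac{ug(x,u)-\sigma}{U}|\nabla U|^2\psi\le(1-\sigma)\int_\Omega h\,u^{-\sigma}\psi,
\]
where the coefficient $ug(x,u)-\sigma$ is nonpositive by \eqref{sgsmallcond2}. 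The reversed inequality holds for $V$. The exponent $1-\sigma$ is chosen exactly so that the quadratic gradient term acquires a definite sign.

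The heart of the argument is then to subtract the two inequalities and test with $\psi=\chi_n(U-V)^+$, where $\chi_n$ is a cut-off supported on an exhaustion of $\Omega$. On $\{U>V\}=\{u>v\}$ the source contribution $(1-\sigma)h(u^{-\sigma}-v^{-\sigma})$ is nonpositive because $h\ge 0$, so the only obstruction to a clean inequality $-\Delta(U-V)^+\le 0$ is the residual $\int[A_u|\nabla U|^2-A_v|\nabla V|^2](U-V)^+$, where $A_w:=(\sigma-wg(x,w))/((1-\sigma)w^{1-\sigma})\ge 0$. Here the monotonicity \eqref{nondecrasingcond} enters decisively: since $wg(x,w)$ is nondecreasing and $U\ge V$ on the set of interest, one gets the pointwise ordering $A_u\le A_v$. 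Splitting the residual as $A_v\,\nabla(U-V)\cdot\nabla(U+V)(U-V)^+ + (A_u-A_v)|\nabla U|^2(U-V)^+$ makes the second summand nonpositive and leaves the first one amenable to Young's inequality, whose linear part is absorbed into $\int|\nabla(U-V)^+|^2$.

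The resulting inequality has the schematic form $\int|\nabla(U-V)^+|^2\lesssim \int M\,((U-V)^+)^2$, with a weight $M$ built from $A_v$ and $|\nabla(U+V)|$. Combining it with \eqref{boundarycond}, which guarantees that $(U-V)^+$ vanishes at $\partial\Omega$ in the $\limsup$ sense, a Stampacchia-type iteration (or a direct Brezis--Ponce Kato inequality together with a Gronwall argument over the exhausting sequence of subdomains) forces $(U-V)^+\equiv 0$, and hence $u\le v$.

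The main obstacle I foresee is the rigorous passage to the limit $\chi_n\to 1$: because no global energy estimate on $U$ or $V$ is available, one must carefully track both the source boundary contribution $\int h\,u^{-\sigma}\chi_n(U-V)^+$ and the boundary remnants coming from the quadratic gradient terms, showing that both vanish solely by exploiting the $\limsup$ condition \eqref{boundarycond}. It is precisely here that the exponent $1-\sigma$ (with $\sigma<1$) is sharp, since $u^{1-\sigma}$ is the natural power suggested by the competition between the singular gradient term and the Dirichlet datum; I expect to follow the approximation scheme developed in \cite{CLLM} (which in turn goes back to \cite{ACJT2}) to make this step rigorous.
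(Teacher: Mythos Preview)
Your proposal is correct and follows essentially the same route as the paper. The paper performs exactly the same change of unknown $\tilde u=u^{1-\sigma}$, $\tilde v=v^{1-\sigma}$, tests with $\phi\,u^{-\sigma}$ to reach the transformed inequalities (your displayed formula is precisely their $\int\nabla\tilde u\,\nabla\phi\le\int\tilde g(x,\tilde u)|\nabla\tilde u|^2\phi+\int(1-\sigma)h\,\tilde u^{-\sigma/(1-\sigma)}\phi$ with $\tilde g(x,s)=\frac{\sigma-s^{1/(1-\sigma)}g(x,s^{1/(1-\sigma)})}{(1-\sigma)s}$), observes that \eqref{nondecrasingcond} makes $\tilde g$ nonincreasing in $s$, and then simply invokes \cite[Theorem~2.1]{L} to conclude; your sketch of the subtraction, the splitting $A_u|\nabla U|^2-A_v|\nabla V|^2=(A_u-A_v)|\nabla U|^2+A_v\nabla(U-V)\cdot\nabla(U+V)$, the sign coming from $A_u\le A_v$ on $\{U>V\}$, and the cut-off passage citing \cite{CLLM,ACJT2} is exactly the content of that cited result.
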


We will present below four existence theorems that represent the core of this work. The proofs are based on a fixed point result. The required a priori estimates are obtained via a blow-up method (see Section~\ref{estimates}).

Let us fix $p>1$ and $\delta\geq 0$. We establish now a growth condition and two limit conditions on $f$ which make it behave like a superlinear power:

\begin{equation}\label{boundsf}
\exists a\ge 1:\quad s^p\leq f(s)\leq a(s+\delta)^p\quad\forall s\geq 0.
\tag{$f_*$}
\end{equation}

\begin{equation}\label{limitf}
\exists L\in (0,+\infty):\quad \lim_{s\to+\infty}\frac{f(s)}{s^p}=L.
\tag{$f_\infty$}
\end{equation}

\begin{equation}\label{fzero}
\lim_{s\to 0} \frac{f(s)}{s}=0.
\tag{$f_0$}
\end{equation}
Observe that \eqref{fzero} implies in particular that $f(0)=0$.

On the other hand, for fixed $\delta\geq 0$ and $\sigma,\tau\in\mathbb{R}$, we set the following growth restriction on $g$ which will allow us to prove certain H\"older estimates (see the Appendix below):

\begin{equation}\label{sgsmallcond}
\begin{cases}
2\sigma-1<\tau\leq\sigma<1,
\\
\tau\leq (s+\delta)g(x,s)\leq\sigma\quad\text{a.e. }x\in\Omega,\,\,\forall s>0.
\end{cases}
\tag{$g_*$}
\end{equation}

We will also need the following limit condition on $g$ at infinity for $p>1$:

\begin{equation}\label{unifconv}
\begin{cases}
&\exists\mu\in C(\overline{\Omega}): \max_{x\in\overline{\Omega}}\mu(x)<\frac{2^*-1-p}{2^*-2},
\\
& \lim_{s\to +\infty}\|s g(\cdot,s)-\mu\|_{L^\infty(\Omega)}=0.
\end{cases}
\tag{$g_\infty$}
\end{equation}

A simple limit condition on $g$ at zero will be required too:
\begin{equation}\label{sgzerolimit}
\exists\lim_{s\to 0} (s+\delta)g(x,s)\quad\text{a.e. }x\in\Omega.
\tag{$g_0$}
\end{equation}
Observe that, if $\delta>0$, then \eqref{sgzerolimit} is equivalent to $g(x,\cdot)$ being continuous at $s=0$ for a.e. $x\in\Omega$. 

We are ready to state our first general existence result:
\begin{theorem}\label{mainthm1}
Let $f:[0,+\infty)\to [0,+\infty)$ be a continuous function and $g:\Omega\times (0,+\infty)\to\mathbb{R}$ be a Carath\'eodory function. For $\delta\geq 0$, $\sigma,\tau\in\mathbb{R}$ and $p>1$, assume that $f$ satisfies \eqref{boundsf}, \eqref{limitf} and \eqref{fzero}, and that $g$ satisfies \eqref{sgsmallcond},\eqref{unifconv} and \eqref{sgzerolimit}.  Then, there exists at least a solution to \eqref{lambdaproblem} for every $\lambda>0$. If, in addition, either $g\geq 0$ or $g(x,\cdot)$ is continuous at $s=0$ a.e. $x\in\Omega$, then there exists at least a finite energy solution to \eqref{lambdaproblem} for every $\lambda>0$.
\end{theorem}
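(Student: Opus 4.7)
The plan is to realize solutions of \eqref{lambdaproblem} as fixed points of a compact operator $T_\lambda$ on a suitable positive cone $\mathcal{K}\subset C(\overline{\Omega})$, and then invoke the Krasnoselskii-type fixed-point theorem of \cite{Kral, deFigLiNuss}. For $w\in\mathcal{K}$, define $T_\lambda(w):=u$ to be the unique finite energy solution to
\begin{equation*}
-\Delta u + g(x,u)|\nabla u|^2 = \lambda f(w) \text{ in }\Omega,\qquad u=0 \text{ on }\partial\Omega.
\end{equation*}
Well-posedness comes from Theorem~\ref{fxmodelthm}: rewriting the quadratic term as $\widehat\mu(x,u)|\nabla u|^2/(u+\delta)$ with $\widehat\mu(x,s):=(s+\delta)g(x,s)$, hypothesis \eqref{sgsmallcond} yields $\|\widehat\mu\|_{L^\infty(\Omega)}\le\max(|\sigma|,|\tau|)<1$, while \eqref{sgzerolimit} lets us extend $\widehat\mu$ continuously up to $s=0$; moreover $\lambda f(w)\in L^\infty(\Omega)$ thanks to \eqref{boundsf} and the boundedness of $w$. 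Compactness and continuity of $T_\lambda$ will follow from the Hölder estimates recalled in the Appendix, which apply precisely because of \eqref{sgsmallcond}, combined with Arzelà-Ascoli.

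The fixed-point theorem requires a uniform a priori bound on all possible solutions, and this is the heart of the argument. Assume by contradiction the existence of $u_n=T_{\lambda_n}(u_n)$ with $\lambda_n$ bounded and $M_n:=\|u_n\|_{L^\infty(\Omega)}\to+\infty$; pick $x_n\in\overline{\Omega}$ with $u_n(x_n)=M_n$ and rescale
\begin{equation*}
v_n(y):=M_n^{-1}u_n(x_n+\rho_n y),\qquad \rho_n:=M_n^{-(p-1)/2},\qquad y\in\Omega_n:=\rho_n^{-1}(\Omega-x_n).
\end{equation*}
A direct computation gives
\begin{equation*}
-\Delta v_n + \widetilde g_n(y,v_n)|\nabla v_n|^2 = \lambda_n M_n^{-p} f(M_n v_n) \quad \text{in } \Omega_n,
\end{equation*}
with $(s+\delta/M_n)\widetilde g_n(y,s)$ uniformly bounded by \eqref{sgsmallcond} and $s\widetilde g_n(y,s)=M_n s\, g(x_n+\rho_n y, M_n s)\to\mu(x_\infty)$ uniformly on compact sets by \eqref{unifconv}, where $x_\infty:=\lim x_n$ (up to subsequence). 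The Hölder estimates of \cite{LU} type, scale-invariant thanks to the form of \eqref{sgsmallcond}, yield local equicontinuity of $\{v_n\}$ and, since $v_n(0)=1$, a uniform local positive lower bound. This lower bound is what allows one to control the singular quotient $|\nabla v_n|^2/(v_n+\delta/M_n)$ as $\delta/M_n\to 0$ and to pass to the limit.

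Depending on whether $\mathrm{dist}(x_n,\partial\Omega)/\rho_n\to+\infty$ or stays bounded, the limit $v_n\to v$ takes place locally uniformly on $\mathbb{R}^N$ or on a half-space with zero Dirichlet condition, and by \eqref{limitf} and \eqref{fzero} the limit $v$ satisfies
\begin{equation*}
-\Delta v + \mu(x_\infty)\frac{|\nabla v|^2}{v}=L v^p,\qquad v\ge 0,\quad v(0)=1.
\end{equation*}
The change of unknown $w:=v^{1-\mu(x_\infty)}/(1-\mu(x_\infty))$ removes the gradient term and produces $-\Delta w=c\,w^q$ with $q=(p-\mu(x_\infty))/(1-\mu(x_\infty))$. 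An elementary computation shows that $\mu(x_\infty)<(2^*-1-p)/(2^*-2)$, supplied by \eqref{unifconv}, is exactly equivalent to $q<2^*-1$, so the classical Liouville theorems of Gidas-Spruck type (in both the whole-space and half-space settings) force $w\equiv 0$, a contradiction. The a priori bound follows, and the Krasnoselskii theorem produces a fixed point of $T_\lambda$ for every $\lambda>0$.

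The hardest step is expected to be the uniform local positive lower bound on the rescaled sequence, since without it the singular gradient term cannot be passed to the limit; the scale-invariance of \eqref{sgsmallcond} is what makes the Ladyzhenskaya-Uraltseva type Hölder estimate apply uniformly across the blow-up, and this is the technical core of the argument. As for the finite energy conclusion, the additional assumption that $g\ge 0$ or that $g(x,\cdot)$ is continuous at $0$ removes the obstruction to using truncations $T_k(u)$ as test functions: either the quadratic term has a favorable sign or its coefficient stays bounded near $\{u=0\}$, and combined with \eqref{boundsf} and the a priori $L^\infty$-bound this produces the $H^1_0(\Omega)$-estimate.
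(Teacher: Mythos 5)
Your overall architecture (fixed point on a cone, blow-up a priori estimate, Liouville via the change of unknown $w=v^{1-\mu}$) is the right one and matches the paper in spirit, but the details leave several genuine gaps.

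\textbf{The operator is not well defined.} You set $T_\lambda(w)$ to be the solution of $-\Delta u + g(x,u)|\nabla u|^2 = \lambda f(w)$, keeping $g(x,u)$ evaluated at the unknown $u$. Theorem~\ref{fxmodelthm} does not cover this: it is stated for a coefficient $\mu(x)$ depending on $x$ only (and with $\mu\geq 0$, $\|\mu\|_{L^\infty}<1$). The paper avoids exactly this difficulty by freezing the quadratic coefficient at the iterate $v$, i.e.\ the auxiliary problem $(Q^t)$ reads $-\Delta u + (v(x)+\delta)g(x,v(x))\frac{|\nabla u|^2}{u+\delta}=\lambda f(v)+t v^\sigma$, which for $\delta=0$ is literally of the form handled by Theorem~\ref{fxmodelthm} and for $\delta>0$ is resolved by sub/supersolutions together with the comparison principle (Theorem~\ref{comparison}); the latter applies because the frozen-coefficient function $s\mapsto s\cdot\frac{(v(x)+\delta)g(x,v(x))}{s+\delta}$ automatically satisfies $(g_\nearrow)$, which is not assumed of $g$ itself and so gives you no uniqueness for your unfrozen operator. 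Moreover your claim $\|\widehat\mu\|_{L^\infty}\le\max(|\sigma|,|\tau|)<1$ is false as a consequence of $(g_*)$: the condition $2\sigma-1<\tau\leq\sigma<1$ allows $\tau$, and hence $|\tau|$, to be arbitrarily large negative (e.g.\ $\sigma=-2$, $\tau=-3$), so the smallness condition of Theorem~\ref{fxmodelthm} can fail.

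\textbf{The homotopy parameter is missing.} The Krasnosel'ski\u{\i}-type result from \cite{deFigLiNuss} requires, besides a uniform a priori bound, (i) that $u\neq sK(u,0)$ on a small sphere $\|u\|=r$ for $s\in[0,1]$ (this uses $(f_0)$), and (iii) nonexistence of fixed points for all $t$ large (this is why the paper introduces the extra term $t u^\sigma$ in $(P^t)$ and proves Proposition~\ref{testimate}). Your operator has no $t$-parameter at all, so the degree-theoretic argument cannot be run as stated; the uniform estimate on the norms is necessary but not sufficient.

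\textbf{No reduction to the case $g\geq 0$, $p<2^*-1$.} Your a priori estimate, as in Proposition~\ref{gidasspruck}, requires $g\geq 0$ and $p<2^*-1$ (the Hölder estimates and the blow-up limit rely on this). However Theorem~\ref{mainthm1} allows sign-changing $g$ and, via $(g_\infty)$, indirectly allows $p\geq 2^*-1$ (forcing $\mu<0$). The paper's Step~3 handles this by the substitution $u=(v+\delta^\gamma)^{1/\gamma}-\delta$, which translates the problem into one with exponent $p_\gamma=\frac{\gamma-1+p}{\gamma}<2^*-1$ and a shifted coefficient $\frac{\sigma+\gamma-1}{\gamma}\in(0,1)$ for $\gamma$ large, after which Steps~1--2 apply. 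This reduction is essential and entirely absent from your proposal.

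The blow-up and Liouville parts of your sketch are essentially correct and match Proposition~\ref{gidasspruck} and Lemma~\ref{liouvillelemma}; the finite-energy conclusion under the extra sign/continuity hypothesis is also handled correctly in spirit. But the three gaps above are not cosmetic: without freezing the coefficient the operator is not known to be well-posed, without the $t$-parameter the fixed-point theorem does not yield a nontrivial fixed point, and without the change of exponent the hypotheses of the a priori estimate are simply not met.
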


\begin{remark}\label{pohozaevremark}
We point out that the smallness condition $\max_{x\in\overline{\Omega}}\mu(x)<\frac{2^*-1-p}{2^*-2}$ in Theorem~\ref{mainthm1} coming from condition \eqref{unifconv} is necessary for the existence of solutions to \eqref{lambdaproblem}, at least in the model case of $\Omega$ a satarshaped domain, $f(s)=\lambda s^p$ for some $\lambda>0$, $p\in (1,2^*-1)$, and $sg(x,s)\equiv\mu$ for some constant $\mu<1$. Indeed, assume by contradiction that $\mu\in\left[\frac{2^*-1-p}{2^*-2},1\right)$ and that $0<u\in H_0^1(\Omega)\cap L^\infty(\Omega)$ satisfies $-\Delta u+\mu\frac{|\nabla u|^2}{u}=\lambda u^p$ in $\Omega$. Then, $v=c u^{1-\mu}$ satisfies $-\Delta v=v^\frac{p-\mu}{1-\mu}$ in $\Omega$ for some $c>0$. Therefore, since $\frac{p-\mu}{1-\mu}\geq 2^*-1$ and $\Omega$ is starshaped, the well-known Pohozaev's identity (see \cite{Pohozaev}) yields a contradiction.
\end{remark}


Let us fix again $p>1$. Next hypothesis is a limit condition on $g$ at infinity weaker than \eqref{unifconv} as it is only required to hold in a neighborhood of $\partial\Omega$.
\begin{equation}\label{unifconvboundary}
\begin{cases}
&\exists\omega\subset\subset\Omega,\,\,\mu\in C(\overline{\Omega\setminus\omega}): \max_{x\in\overline{\Omega\setminus\omega}}\mu(x)<\frac{2^*-1-p}{2^*-2},
\\
& \lim_{s\to +\infty}\|s g(\cdot,s)-\mu\|_{L^\infty(\Omega\setminus\omega)}=0.
\end{cases}
\tag{$\widetilde{g_\infty}$}
\end{equation}

The following result shows that, assuming a stronger control on $g$ from above, one can relax the limit condition on $g$ at infinity to obtain a solution to \eqref{lambdaproblem}.

\begin{theorem}\label{mainthm2}
Let $f:[0,+\infty)\to [0,+\infty)$ be a continuous function and $g:\Omega\times (0,+\infty)\to\mathbb{R}$ be a Carath\'eodory function. For $\delta\geq 0$, $\sigma,\tau\in\mathbb{R}$ and $p>1$, assume that $f$ satisfies \eqref{boundsf}, \eqref{limitf} and \eqref{fzero}, and that $g$ satisfies \eqref{sgsmallcond}, \eqref{unifconvboundary} and \eqref{sgzerolimit}. Assume in addition that 
$\sigma\leq \frac{N-(N-2)p}{2}$. Then, there exists at least a solution to \eqref{lambdaproblem} for every $\lambda>0$. If, in addition, either $g\geq 0$ or $g(x,\cdot)$ is continuous at $s=0$ a.e. $x\in\Omega$, then there exists at least a finite energy solution to \eqref{lambdaproblem} for every $\lambda>0$.
\end{theorem}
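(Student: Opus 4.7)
The plan is to mirror the proof of Theorem~\ref{mainthm1}: we reformulate \eqref{lambdaproblem} as a fixed point problem for a compact operator whose well-definition relies on Theorem~\ref{fxmodelthm}, and conclude via the same topological degree argument. The hypotheses on $f$ and on the behaviour of $g$ near $s=0$ are identical to those of Theorem~\ref{mainthm1}, so the only step that must be reworked is the a priori bound derived via blow-up, since \eqref{unifconvboundary} now gives information on $g$ at infinity only on $\overline{\Omega\setminus\omega}$.

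Assume by contradiction that there is a sequence $\{u_n\}$ of solutions to problems of type \eqref{lambdaproblem} with $\lambda=\lambda_n$ in a compact subset of $(0,+\infty)$ and with $M_n:=\|u_n\|_{L^\infty(\Omega)}\to+\infty$ attained at $x_n\in\overline{\Omega}$, $x_n\to x_\infty$. Rescale as in Theorem~\ref{mainthm1} by $v_n(y)=u_n(x_n+\ell_n y)/M_n$, $\ell_n=M_n^{-(p-1)/2}$. The H\"older estimates and positive lower bounds from the Appendix depend only on \eqref{sgsmallcond} and yield local compactness; along a subsequence $v_n\to v$ locally uniformly with $v(0)=1$, $v\geq 0$, defined on $\mathbb{R}^N$ if $x_\infty\in\Omega$ and on a half-space if $x_\infty\in\partial\Omega$. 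If $x_\infty\in\overline{\Omega\setminus\omega}$, then for $n$ large $x_n\in\overline{\Omega\setminus\omega}$ and \eqref{unifconvboundary} applies: the limit $v$ satisfies $-\Delta v+\mu(x_\infty)|\nabla v|^2/v=Lv^p$, the change of unknown $w=v^{1-\mu(x_\infty)}$ reduces it to a pure power equation, and the Liouville results of Section~\ref{estimates} yield a contradiction exactly as in Theorem~\ref{mainthm1}.

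The genuinely new case is $x_\infty\in\omega$, where the asymptotic behaviour of $s\, g(\cdot,s)$ at infinity is not prescribed. Here we exploit $\sigma\leq\frac{N-(N-2)p}{2}$ via the change of unknown $W_n=(u_n+\delta)^{1-\sigma}/(1-\sigma)$, which is well defined as $\sigma<1$. Using the upper bound $(s+\delta)g(x,s)\leq\sigma$ from \eqref{sgsmallcond} and $f(s)\geq s^p$ from \eqref{boundsf}, a direct computation analogous to the one in Remark~\ref{pohozaevremark} gives
\[
-\Delta W_n\ \geq\ (1-\sigma)\lambda_n\, u_n^{p}\,(u_n+\delta)^{-\sigma}\ \geq\ c_n\, W_n^{(p-\sigma)/(1-\sigma)},
\]
with $c_n$ bounded below by a positive constant, since $\lambda_n$ stays in a compact set and, in the rescaled picture, $u_n/(u_n+\delta)\to 1$ uniformly on compact sets where $v_n$ is bounded away from $0$. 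Applying the same blow-up scaling to $W_n$ (which is compatible with the exponent because $\ell_n^{2}\, M_n^{(1-\sigma)(q-1)}=\ell_n^{2}\, M_n^{p-1}=1$ for $q=(p-\sigma)/(1-\sigma)$) and passing to the limit yields a positive supersolution $W=v^{1-\sigma}/(1-\sigma)$ of $-\Delta W\geq c\, W^q$ on all of $\mathbb{R}^N$, with $W(0)=1/(1-\sigma)>0$. The assumption $\sigma\leq\frac{N-(N-2)p}{2}$ is algebraically equivalent to $q\leq N/(N-2)$, so the classical Mitidieri--Pohozaev Liouville theorem for positive supersolutions of $-\Delta W\geq W^q$ in $\mathbb{R}^N$ in the range $q\leq N/(N-2)$ forces $W\equiv 0$, a contradiction.

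The main obstacle is to rigorously justify that the scalar inequality for $W_n$ survives the blow-up limit in the distributional sense, and that the lower bound on $u_n/(u_n+\delta)$ which yields a uniform $c$ actually holds on the compact sets used to test convergence; both points reduce to combining the H\"older regularity and the positive lower bounds of the Appendix with the smoothness of the power transformation. Once the a priori estimate is established, the fixed point argument and the distinction between general and finite-energy solutions (according to whether $g\geq 0$ or $g(x,\cdot)$ is continuous at $s=0$) are handled exactly as in Theorem~\ref{mainthm1}.
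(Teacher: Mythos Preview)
Your overall strategy is correct and matches the paper's: same fixed point scheme as in Theorem~\ref{mainthm1}, with only the blow-up a priori estimate reworked. The paper packages this estimate as Proposition~\ref{gidasspruck3} and invokes it in place of Proposition~\ref{gidasspruck}. Two organizational points are worth flagging, though neither is a fatal gap.

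First, your case split differs from the paper's and is slightly leakier. You split according to whether $x_\infty\in\overline{\Omega\setminus\omega}$ or $x_\infty\in\omega$; the paper splits according to whether $x_0\in\Omega$ or $x_0\in\partial\Omega$. The paper's split is cleaner because for \emph{every} interior limit point it runs the supersolution argument (which needs only $(s+\delta)g(x,s)\leq\sigma$, valid on all of $\Omega$), reserving the use of \eqref{unifconvboundary} for boundary points, which are automatically at positive distance from $\omega$. Your split has a small hole at $x_\infty\in\partial\omega$: even if $x_n\in\overline{\Omega\setminus\omega}$ eventually, the rescaled balls $B_{R\eta_n}(x_n)$ may still meet $\omega$, where $\mu$ is undefined and the uniform convergence in \eqref{unifconvboundary} is not available, so the passage to the limit in the gradient term is not justified as written. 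The fix is easy (use the supersolution route for all interior $x_\infty$, as the paper does), but it should be said.

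Second, for the interior case you perform the power change $W_n=(u_n+\delta)^{1-\sigma}/(1-\sigma)$ \emph{before} rescaling and then pass to the limit in a differential inequality. The paper instead keeps $v_n$, uses $(s+\delta)g(x,s)\leq\sigma$ and $f(s)\geq s^p$ to obtain directly the rescaled inequality
\[
-\div(M_n\nabla v_n)+b_n\cdot\nabla v_n+\sigma\,\frac{M_n\nabla v_n\cdot\nabla v_n}{v_n}\ \geq\ \lambda\,v_n^p,
\]
passes to the limit once via Fatou to get a supersolution of $-\Delta v+\sigma|\nabla v|^2/v=v^p$ on $\mathbb{R}^N$, and only then applies the change $w=v^{1-\sigma}$ inside Lemma~\ref{liouvillelemmasupersol}. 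This sidesteps the ``main obstacle'' you mention about controlling $u_n/(u_n+\delta)$ uniformly: no such lower bound is needed because $\delta$ does not appear in the rescaled inequality. Your route works too (and your scaling computation $(1-\sigma)(q-1)=p-1$ is correct), but the paper's order of operations is simpler and avoids that technical point entirely.

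Finally, note that the paper's a priori bound is proved for the auxiliary family $(P^t)$ with the extra term $t u^\sigma$, not just for \eqref{lambdaproblem}; you should state the blow-up for that family, as the additional term is what the degree argument needs.
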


Next theorem shows that the limit conditions at infinity are actually not essential to obtain solutions to \eqref{lambdaproblem}. In return, one has to assume an even stronger control on $g$ from above.

\begin{theorem}\label{mainthm3}
Let $f:[0,+\infty)\to [0,+\infty)$ be a continuous function and $g:\Omega\times (0,+\infty)\to\mathbb{R}$ be a Carath\'eodory function. For $\delta\geq 0$, $\sigma,\tau\in\mathbb{R}$ and $p>1$, assume that $f$ satisfies \eqref{boundsf} and \eqref{fzero}, and that $g$ satisfies \eqref{sgsmallcond} and \eqref{sgzerolimit}. Assume in addition that $\sigma\leq\frac{N+1-(N-1)p}{2}$. Then, there exists at least a solution to \eqref{lambdaproblem} for every $\lambda>0$. Moreover, the following holds.
\begin{enumerate}
\item If either $g\geq 0$ or $g(x,\cdot)$ is continuous at $s=0$ a.e. $x\in\Omega$, then there exists at least a finite energy solution to \eqref{lambdaproblem} for every $\lambda>0$. 
\item If $\delta=0$, then there exists $C>0$ such that, for every $\lambda>0$ and for every solution $u$ to \eqref{lambdaproblem}, the estimate $\lambda^\frac{1}{p-1}\|u\|_{L^\infty(\Omega)}\leq C$ holds.
\end{enumerate}
\end{theorem}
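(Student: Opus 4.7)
The plan is to follow the topological framework of Theorems \ref{mainthm1} and \ref{mainthm2}. For each $\lambda>0$, introduce a compact operator $T_\lambda\colon C(\overline{\Omega})\to C(\overline{\Omega})$ sending $w\geq 0$ to the unique positive solution $u$ of the linearized problem
\begin{equation*}
-\Delta u + \widetilde\mu_w(x)\frac{|\nabla u|^2}{u} = \lambda f(w),\qquad u=0\text{ on }\partial\Omega,
\end{equation*}
where $\widetilde\mu_w(x)=(w(x)+\delta)g(x,w(x))$, employing \eqref{sgzerolimit} to define the coefficient where $w=0$ and a decomposition into positive and negative parts when $g$ is signed. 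Condition \eqref{sgsmallcond} yields $\|\widetilde\mu_w\|_{L^\infty(\Omega)}<1$, so Theorem \ref{fxmodelthm} (and its signed variant) applies. Fixed points of $T_\lambda$ are solutions of \eqref{lambdaproblem}, and a Leray--Schauder/Krasnoselskii argument reduces existence to a uniform a priori bound.

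I would establish item (2) directly: if $\delta=0$, any solution satisfies $\lambda^{1/(p-1)}\|u\|_{L^\infty(\Omega)}\leq C$ for a constant depending only on the structural data. This bound gives both the a priori estimate needed for the fixed-point argument and the asymptotic $\|u_\lambda\|_{L^\infty(\Omega)}\to 0$ as $\lambda\to+\infty$. Arguing by contradiction, suppose there are solutions $u_n$ with $M_n:=\|u_n\|_{L^\infty(\Omega)}=u_n(x_n)$ and $\lambda_n^{1/(p-1)}M_n\to+\infty$; set $r_n=(\lambda_n M_n^{p-1})^{-1/2}\to 0$ and $v_n(y)=M_n^{-1}u_n(x_n+r_n y)$ on $\Omega_n:=r_n^{-1}(\Omega-x_n)$. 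Then
\begin{equation*}
-\Delta v_n + M_n g(x_n+r_n y, M_n v_n)\,|\nabla v_n|^2 = F_n(y),
\end{equation*}
where by \eqref{boundsf} $F_n/v_n^p\to 1$, and by \eqref{sgsmallcond} $v_n(y)\,M_n g(x_n+r_n y, M_n v_n(y))\in[\tau,\sigma]+o(1)$. The Hölder estimates of the Appendix (valid since $\sigma<1$) provide a local uniform Hölder bound on $v_n$; along a subsequence, $v_n\to v$ locally uniformly, and $\Omega_n$ either exhausts $\mathbb{R}^N$ (when $r_n^{-1}\mathrm{dist}(x_n,\partial\Omega)\to\infty$) or converges to a half-space (with $v=0$ on its boundary). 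A weak-$\ast$ extraction yields $\mu_\infty(y)\in[\tau,\sigma]$ such that, securing $v>0$ inside by a Harnack argument so that the singular gradient term is well defined in the limit,
\begin{equation*}
-\Delta v + \mu_\infty(y)\frac{|\nabla v|^2}{v} = v^p \qquad\text{with } v(0)=1,\ 0\leq v\leq 1.
\end{equation*}

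Since $\mu_\infty$ is only known to take values in $[\tau,\sigma]$, I cannot transform to a semilinear equation by a direct change of variable; instead, using $\mu_\infty\leq\sigma$ and setting $w=v^{1-\sigma}/(1-\sigma)$, one computes
\begin{equation*}
-\Delta w \;=\; v^{-\sigma}\Bigl(-\Delta v + \sigma\frac{|\nabla v|^2}{v}\Bigr) \;=\; v^{p-\sigma}+(\sigma-\mu_\infty)v^{-\sigma-1}|\nabla v|^2 \;\geq\; ((1-\sigma)w)^{(p-\sigma)/(1-\sigma)}.
\end{equation*}
The hypothesis $\sigma\leq\frac{N+1-(N-1)p}{2}$ is equivalent to $\frac{p-\sigma}{1-\sigma}\leq\frac{N+1}{N-1}$, which is subcritical both for the Liouville nonexistence of positive supersolutions of $-\Delta w\geq c w^q$ on a half-space with zero boundary datum and, since $\frac{N+1}{N-1}<\frac{N}{N-2}$, for that on $\mathbb{R}^N$. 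Hence $w\equiv 0$, contradicting $w(0)>0$. The main technical obstacle I anticipate is the passage to the limit in the singular gradient term $|\nabla v_n|^2/v_n$: simultaneously securing a strict positive lower bound on $v_n$ on interior compact sets of the limit domain, and the correct boundary behaviour $v=0$ in the half-space case, both without the uniform-in-$s$ limit conditions \eqref{unifconv} or \eqref{unifconvboundary} available in the earlier theorems.
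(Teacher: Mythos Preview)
Your overall strategy coincides with the paper's: the same fixed-point framework, the same blow-up rescaling, and the same endpoint, namely a supersolution of $-\Delta w\geq c\,w^{(p-\sigma)/(1-\sigma)}$ on $\mathbb{R}^N$ or $\mathbb{R}^N_+$, ruled out by the Liouville results for supersolutions (Lemma~\ref{liouvillelemmasupersol}, applied through Propositions~\ref{gidasspruck2} and~\ref{gidasspruck4}).

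The gap lies in the weak-$\ast$ extraction step. You claim that, after extracting $\mu_\infty\in L^\infty$ as the weak-$\ast$ limit of the coefficients $c_n(y):=M_n v_n\, g(x_n+r_ny,M_nv_n)$, the limit $v$ satisfies the \emph{equation} $-\Delta v+\mu_\infty(y)|\nabla v|^2/v=v^p$. But the product $c_n\cdot|\nabla v_n|^2/v_n$ does not pass to the limit under weak-$\ast$ convergence of $c_n$ alone: even granting $\nabla v_n\to\nabla v$ a.e.\ and $v_n\to v$ locally uniformly with $v>0$, the factor $|\nabla v_n|^2/v_n$ converges only a.e., not strongly in $L^1_{\mathrm{loc}}$, so it cannot be paired against a merely weak-$\ast$ convergent coefficient. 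This is precisely why the uniform-limit hypotheses \eqref{unifconv} and \eqref{unifconvboundary} were needed in the earlier theorems and are \emph{not} available here. The paper bypasses the issue by never attempting to identify $\mu_\infty$: using $sg(x,s)\leq\sigma$ and $f(s)\geq s^p$ already at the level of $v_n$ yields the \emph{inequality}
\[
-\Delta v_n+\sigma\,\frac{|\nabla v_n|^2}{v_n}\geq v_n^p\qquad\text{in }\Omega_n,
\]
with the fixed constant $\sigma$ in place of $c_n$; one then passes to the limit in this inequality with a single Fatou argument (testing with $v\phi/v_n$ as in Step~1.3 of Proposition~\ref{gidasspruck}) to obtain a supersolution of \eqref{Xprob}. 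Your substitution $w=v^{1-\sigma}/(1-\sigma)$ is exactly the change of variable inside Lemma~\ref{liouvillelemmasupersol}; the fix is simply to apply the bound $\leq\sigma$ \emph{before} the limit rather than after. A secondary point: the a priori bound feeding the degree argument must be proved for the auxiliary family \eqref{tproblem} (with the extra term $tu^\sigma$), which is the content of Proposition~\ref{gidasspruck2}, not for \eqref{lambdaproblem} itself; item~(2) is the separate Proposition~\ref{gidasspruck4}.
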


Let $s_0\in (0,1)$, $\sigma,\tau\in\mathbb{R}$ and $p>1$. Consider the following growth conditions near zero:

\begin{equation}\label{boundsftilde}
\exists a\ge 1:\quad s^p\leq f(s)\leq a s^p\quad\forall s\in(0,s_0).
\tag{$\widetilde{f_*}$}
\end{equation}

\begin{equation}\label{sgsmallcondtilde}
\begin{cases}
2\sigma-1<\tau\leq\sigma<1,
\\
\tau\leq s g(x,s)\leq\sigma\quad\text{a.e. }x\in\Omega,\,\,\forall s\in(0,s_0).
\end{cases}
\tag{$\widetilde{g_*}$}
\end{equation}
Notice that condition \eqref{boundsftilde} implies that $\lim_{s\to 0} \frac{f(s)}{s}=0$ and, in particular, $f(0)=0$. On the other hand, we point out that, if $\tau>0$, then condition \eqref{sgsmallcondtilde} forces $g$ to be singular at $s=0$. However, if \eqref{sgsmallcondtilde} holds for some $\tau<0$ and $\sigma>0$, then $g$ may be rather general at $s=0$ since it can be continuous as well as unbounded from above and from below.

Our next existence theorem will require neither growth nor limit conditions at infinity on $f$ and $g$. In exchange, the existence of solution holds only for every $\lambda>0$ large enough.

\begin{theorem}\label{mainthm4}
Let $f:[0,+\infty)\to [0,+\infty)$ be a continuous function and $g:\Omega\times (0,+\infty)\to\mathbb{R}$ be a Carath\'eodory function. For $s_0\in (0,1)$, $\sigma,\tau\in\mathbb{R}$ and $p>1$, assume that $f$ satisfies \eqref{boundsftilde} and that $g$ satisfies \eqref{sgsmallcondtilde}. Assume in addition that there exists $\lim_{s\to 0} sg(x,s)$ for a.e. $x\in\Omega$, and also that $\sigma\leq\frac{N+1-(N-1)p}{2}$. Then, there exists $\lambda_0>0$ such that there exists at least a solution $u_\lambda$ to \eqref{lambdaproblem} for every $\lambda>\lambda_0$ satisfying $\lim_{\lambda\to+\infty}\|u_\lambda\|_{L^\infty(\Omega)}=0$. If, in addition, either $g\geq 0$ or $g(x,\cdot)$ is continuous at $s=0$ a.e. $x\in\Omega$, then $u_\lambda$ is a finite energy solution.
\end{theorem}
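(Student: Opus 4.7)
The strategy is to reduce Theorem~\ref{mainthm4} to Theorem~\ref{mainthm3} by extending $f$ and $g$ outside the interval $(0,s_0)$ so that the hypotheses of the latter hold \emph{globally} with $\delta=0$, and then exploiting the universal estimate in item~(2) of Theorem~\ref{mainthm3} to conclude that the solutions of the extended problem eventually take values in $(0,s_0/2)$ and hence are also solutions of the original problem.

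More concretely, I first define a continuous extension $\widetilde{f}:[0,+\infty)\to[0,+\infty)$ by setting $\widetilde{f}(s)=f(s)$ for $s\in[0,s_0/2]$ and $\widetilde{f}(s)=f(s_0/2)(s_0/2)^{-p}s^p$ for $s>s_0/2$. Because of \eqref{boundsftilde}, this guarantees $s^p\le\widetilde{f}(s)\le a\,s^p$ on all of $[0,+\infty)$, so \eqref{boundsf} (with $\delta=0$) holds, and \eqref{fzero} follows from $p>1$. Analogously, I extend $g$ to a Carath\'eodory function $\widetilde{g}:\Omega\times(0,+\infty)\to\mathbb{R}$ by $\widetilde{g}(x,s)=g(x,s)$ for $s\in(0,s_0/2]$ and $\widetilde{g}(x,s)=(s_0/2)\,g(x,s_0/2)/s$ for $s>s_0/2$. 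Since $\widetilde{g}(x,s_0/2^{-})=g(x,s_0/2)=\widetilde{g}(x,s_0/2^{+})$, continuity in $s$ is preserved, and the bounds $\tau\le s\widetilde{g}(x,s)\le\sigma$ extend from $(0,s_0)$ to all of $(0,+\infty)$, so \eqref{sgsmallcond} with $\delta=0$ is in force. The limit condition \eqref{sgzerolimit} transfers automatically from $g$ to $\widetilde{g}$, as they coincide near $s=0$; the growth control \eqref{ggeneral} is preserved by the explicit formulas; and if $g\geq 0$ (resp.\ $g(x,\cdot)$ is continuous at $0$), then so is $\widetilde{g}$.

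With these extensions in hand, Theorem~\ref{mainthm3} applies (the assumption $\sigma\le\frac{N+1-(N-1)p}{2}$ is inherited) to the problem obtained from \eqref{lambdaproblem} by replacing $f,g$ with $\widetilde{f},\widetilde{g}$. It provides, for every $\lambda>0$, a solution $\widetilde{u}_\lambda$ and, since $\delta=0$, a constant $C>0$ independent of $\lambda$ with $\lambda^{\frac{1}{p-1}}\|\widetilde{u}_\lambda\|_{L^\infty(\Omega)}\le C$. Taking $\lambda_0:=\bigl(2C/s_0\bigr)^{p-1}$, for every $\lambda>\lambda_0$ one has $\|\widetilde{u}_\lambda\|_{L^\infty(\Omega)}<s_0/2$, hence $\widetilde{f}(\widetilde{u}_\lambda(x))=f(\widetilde{u}_\lambda(x))$ and $\widetilde{g}(x,\widetilde{u}_\lambda(x))=g(x,\widetilde{u}_\lambda(x))$ a.e.\ in $\Omega$. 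Setting $u_\lambda:=\widetilde{u}_\lambda$ then yields a solution of \eqref{lambdaproblem} in the original sense, and the bound above forces $\|u_\lambda\|_{L^\infty(\Omega)}\to 0$ as $\lambda\to+\infty$. The finite-energy statement follows because, under either of the two additional assumptions, Theorem~\ref{mainthm3} already furnishes a finite-energy $\widetilde{u}_\lambda$.

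The main conceptual point is ensuring that the uniform $L^{\infty}$ bound from Theorem~\ref{mainthm3}(2), which holds precisely for $\delta=0$, is enough to suppress the extended portion of the nonlinearities. Nothing here is a genuine obstacle: the only care needed is in the choice of extensions so that (i) the bound $\tau\le s\widetilde{g}(x,s)\le\sigma$ continues to hold (the reciprocal extension $\widetilde{g}\sim 1/s$ is chosen exactly for this reason), (ii) the sign/continuity properties at $s=0$ are transferred, and (iii) the Carath\'eodory character is retained. Once these are checked, the proof is a direct application of Theorem~\ref{mainthm3}.
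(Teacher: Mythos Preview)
Your proposal is correct and follows essentially the same approach as the paper: extend $f$ and $g$ beyond the small interval by a power/reciprocal continuation that preserves \eqref{boundsf} and \eqref{sgsmallcond} with $\delta=0$, apply Theorem~\ref{mainthm3}, and use the universal bound $\lambda^{1/(p-1)}\|u_\lambda\|_{L^\infty}\le C$ to force the solutions back into the original range for large $\lambda$. The only cosmetic difference is that you cut at $s_0/2$ while the paper cuts at $s_0$, which is immaterial.
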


Regarding Theorem~\ref{mainthm4}, observe that, if $p>\frac{N+1}{N-1}$, then $\sigma<0$, so $g(x,s)<0$ for $s$ near zero. Thus, for instance, the particular case $g(x,s)=\frac{\mu}{s^\gamma}$ and $f(s)=s^p$ is not covered by Theorem~\ref{mainthm4} if $\gamma\in (0,1)$, $p\in \left[\frac{N+1}{N-1},2^*-1\right)$ and $\mu>0$ is constant. The result that comes next does cover this particular case. In fact, for $p\in (1,2^*-1)$, next theorem allows to consider general continuous functions $g$ satisfying 

\begin{equation}\label{integrablecond}
\exists G\in L^1((0,s_0)):\quad  0\leq g(x,s)\leq G(s)\quad\forall (x,s)\in\Omega\times (0,s_0).
\tag{G}
\end{equation}

\begin{equation}\label{sguniflimitzero}
\begin{cases}
&\exists\mu_0\in C(\overline{\Omega}): \max_{x\in\overline{\Omega}}\mu_0(x)<\frac{2^*-1-p}{2^*-2},
\\
& \lim_{s\to 0}\|s g(\cdot,s)-\mu_0\|_{L^\infty(\Omega)}=0.
\end{cases}
\tag{$\widetilde{g_0}$}
\end{equation}

\begin{remark}\label{muzeroremark}
Notice that \eqref{integrablecond} and \eqref{sguniflimitzero} imply that $\lim_{s\to 0} \|sg(\cdot,s)\|_{L^\infty(\Omega)}=0$, i.e. $\mu_0\equiv 0$ in condition \eqref{sguniflimitzero}. Indeed, we first observe that the fact that $g\geq 0$ implies that
\[\mu_0(x)\geq\mu_0(x)-sg(x,s)\geq-\|sg(\cdot,s)-\mu_0\|_{L^\infty(\Omega)}\quad \forall (x,s)\in\Omega\times (0,s_0).\]
Passing to the limit as $s\to 0$ leads to $\mu_0\geq 0$. On the other hand, we know that for every $\varepsilon>0$ there exists $s_\varepsilon\in (0,1)$ such that 
\[s g(x,s)\geq\mu_0(x)-\varepsilon\quad\forall (x,s)\in \Omega\times(0,s_\varepsilon).\]
Let us assume by contradiction that there exists $x_0\in\Omega$ such that $\mu_0(x_0)>0$. Then we choose $\varepsilon=\frac{\mu_0(x_0)}{2}$ and we obtain
\[G(s)\geq g(x_0,s)\geq\frac{\varepsilon}{s}\quad\forall s\in (0,s_\varepsilon).\]
This contradicts hypothesis \eqref{integrablecond}.
\end{remark}

The result reads as follows:
\begin{theorem}\label{mainthm5}
Let $g:\overline{\Omega}\times (0,+\infty)\to\mathbb{R}$ be a continuous function. For $s_0\in (0,1)$ and $p\in (1,2^*-1)$, assume that $g$ satisfies \eqref{integrablecond} and \eqref{sguniflimitzero}. Let us also consider the function $f:[0,+\infty)\to [0+\infty)$ defined by $f(s)=s^p$ for all $s\geq 0$. Then, there exists $\lambda_0>0$ such that there exists at least a finite energy solution $u_\lambda$ to \eqref{lambdaproblem} for every $\lambda>\lambda_0$ satisfying $\lim_{\lambda\to+\infty}\|u_\lambda\|_{L^\infty(\Omega)}=0$.
\end{theorem}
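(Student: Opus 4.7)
The plan is to reduce the problem to Theorem~\ref{mainthm1} by truncating the coefficient $g$ away from zero, and then to extract the $L^\infty$-smallness via the natural scaling of the superlinear problem. The key observation, encoded in Remark~\ref{muzeroremark}, is that \eqref{integrablecond} together with \eqref{sguniflimitzero} force $\mu_0 \equiv 0$, so that $\|s g(\cdot, s)\|_{L^\infty(\Omega)} \to 0$ as $s \to 0^+$.

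We fix $\varepsilon \in \left(0, \min\{1/2, (2^*-1-p)/(2^*-2)\}\right)$ and choose $s_\varepsilon \in (0, s_0)$ so that $s g(x,s) \leq \varepsilon$ for every $x \in \Omega$ and every $s \in (0, s_\varepsilon]$. We then introduce
\begin{equation*}
\tilde g(x, s) = \begin{cases} g(x, s) & \text{if } s \leq s_\varepsilon, \\[4pt] \dfrac{s_\varepsilon}{s}\, g(x, s_\varepsilon) & \text{if } s > s_\varepsilon, \end{cases}
\end{equation*}
which is continuous and nonnegative on $\overline\Omega \times (0, +\infty)$, satisfies $0 \leq s \tilde g(x, s) \leq \varepsilon$ globally, and verifies $s \tilde g(\cdot, s) \equiv \mu$ on $(s_\varepsilon, +\infty)$ for $\mu(x) := s_\varepsilon g(x, s_\varepsilon) \in C(\overline\Omega)$, $\max\overline{\Omega}\, \mu \leq \varepsilon$. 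Taking $\delta = 0$, $\tau = 0$, $\sigma = \varepsilon$, a direct verification shows that the pair $(f(s) = s^p, \tilde g)$ fulfills \eqref{boundsf}, \eqref{limitf}, \eqref{fzero}, \eqref{sgsmallcond}, \eqref{unifconv} and \eqref{sgzerolimit}. Theorem~\ref{mainthm1} then produces, for every $\lambda > 0$, a finite energy solution $u_\lambda$ of the modified equation $-\Delta u + \tilde g(x, u)|\nabla u|^2 = \lambda u^p$ in $\Omega$ with $u = 0$ on $\partial \Omega$.

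To recover the original problem for large $\lambda$, we rescale by $v_\lambda = \lambda^{1/(p-1)} u_\lambda$, which satisfies $-\Delta v_\lambda + \tilde g_\lambda(x, v_\lambda) |\nabla v_\lambda|^2 = v_\lambda^p$ in $\Omega$ with $v_\lambda = 0$ on $\partial \Omega$, where $\tilde g_\lambda(x, v) = \lambda^{-1/(p-1)} \tilde g(x, \lambda^{-1/(p-1)} v)$. The identity $v \tilde g_\lambda(x, v) = s \tilde g(x, s)$ with $s = \lambda^{-1/(p-1)} v$ shows that $0 \leq v \tilde g_\lambda \leq \varepsilon$ uniformly in $\lambda$. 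Running the blow-up a priori estimates of Section~\ref{estimates} uniformly along this parameter family will yield $\|v_\lambda\|_{L^\infty(\Omega)} \leq C$ for a constant $C$ independent of $\lambda$, whence $\|u_\lambda\|_{L^\infty(\Omega)} \leq C \lambda^{-1/(p-1)}$. Choosing $\lambda_0 > 0$ so that $C \lambda_0^{-1/(p-1)} \leq s_\varepsilon$, we obtain $0 < u_\lambda \leq s_\varepsilon$ for every $\lambda > \lambda_0$; hence $\tilde g(x, u_\lambda) = g(x, u_\lambda)$ pointwise, and $u_\lambda$ is a finite energy solution to \eqref{lambdaproblem} with $\|u_\lambda\|_{L^\infty(\Omega)} \to 0$.

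The hard part is the uniform $L^\infty$-bound on $\{v_\lambda\}_{\lambda > 0}$. We must verify that the blow-up contradiction behind the a priori estimates of Section~\ref{estimates} runs uniformly along the family $\tilde g_\lambda$, exploiting the uniform smallness $v \tilde g_\lambda \leq \varepsilon$ together with the pointwise vanishing $\tilde g_\lambda(x, v) \to 0$ as $\lambda \to +\infty$ at each fixed $v > 0$ (which reflects $\tilde g_\lambda(x, v) = v^{-1}\, s \tilde g(x, s)$ with $s \to 0$). Zooming in around a maximum point, the limit blow-up profile will solve an equation of the form $-\Delta w + \mu(x_0) w^{-1} |\nabla w|^2 = w^p$ on $\mathbb{R}^N$ or a half-space, with $\mu(x_0) \leq \varepsilon < (2^*-1-p)/(2^*-2)$; the change of unknown $z = w^{1-\mu(x_0)}$ transforms this into $-\Delta z = (1-\mu(x_0)) z^{(p-\mu(x_0))/(1-\mu(x_0))}$, whose exponent stays strictly below $2^*-1$ precisely because of the constraint on $\varepsilon$, so that the classical Liouville result of \cite{GidasSpruck} produces the required contradiction.
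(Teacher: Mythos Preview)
Your overall strategy matches the paper's: truncate $g$ near zero to define $\tilde g$, apply Theorem~\ref{mainthm1} to the truncated problem to get solutions for every $\lambda>0$, and then establish the $\lambda$-uniform bound $\lambda^{1/(p-1)}\|u_\lambda\|_{L^\infty(\Omega)}\le C$ so that for large $\lambda$ the solution falls below the truncation level and hence solves \eqref{lambdaproblem}.

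However, your last paragraph contains a genuine gap. In the blow-up for $v_\lambda$, after the second rescaling $w_n(y)=\eta_n^{2/(p-1)}v_{\lambda_n}(x_n+\eta_n y)$ with $\eta_n=\|v_{\lambda_n}\|_{L^\infty}^{-(p-1)/2}$, the equation reads
\[
-\Delta w_n+\frac{1}{L_n}\,\tilde g\Bigl(x_n+\eta_n y,\,\frac{w_n}{L_n}\Bigr)|\nabla w_n|^2=w_n^p,\qquad L_n^{p-1}=\lambda_n\eta_n^2.
\]
You only treat the situation leading to a limit of the form $-\Delta w+\mu(x_0)|\nabla w|^2/w=w^p$, which corresponds to $L_n\to 0$ (so that $w_n/L_n\to+\infty$ and $(g_\infty)$ applies) or $L_n\to+\infty$ (so that $w_n/L_n\to 0$ and $\mu_0\equiv 0$ applies). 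But nothing rules out a subsequence with $L_n\to L\in(0,+\infty)$, and then the limit equation is
\[
-\Delta w+h(w)|\nabla w|^2=w^p,\qquad h(s)=\tfrac{1}{L}\,\tilde g\bigl(x_0,\tfrac{s}{L}\bigr),
\]
which is \emph{not} of the homogeneous form $\mu(x_0)/w$. Your change of unknown $z=w^{1-\mu(x_0)}$ then no longer removes the gradient term, and the classical Gidas--Spruck Liouville theorem cannot be applied directly.

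This intermediate case is exactly where the paper does extra work: it appeals to Proposition~\ref{gidasspruck5}, whose key hypothesis~\eqref{Hdecreasing} asks that $s\mapsto\psi'(s)s^p/\psi(s)^{2^*-1}$ be decreasing for the $\psi$ built from $\tilde g(x_0,\cdot)$, so that Lemma~\ref{liouvillelemma} yields the Liouville contradiction for the general $h$. The paper then verifies~\eqref{Hdecreasing} by an explicit computation (inequality~\eqref{increasingineq} and the estimates following it), and it is precisely there that the integrability condition~\eqref{integrablecond} is used and that the truncation level must be chosen sufficiently small. Your sketch omits this entire step, so the blow-up contradiction is incomplete as written.
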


\begin{remark}
We point out that Theorem~\ref{mainthm5} is valid for a very wide class of functions $g$. For instance, if
\[g(x,s)=\mu(x)h(s)\quad\forall (x,s)\in\Omega\times [0,+\infty),\]
where $0\lneq\mu\in C(\overline{\Omega})$ and $h:[0,+\infty)\to [0,+\infty)$ is continuous (also at $s=0$), then $g$ satisfies \eqref{integrablecond} and \eqref{sguniflimitzero}. On the other hand, a prototypical example of function $g$ singular at $s=0$ satisfying the conditions \eqref{integrablecond} and \eqref{sguniflimitzero} is
\[g(x,s)=\frac{\mu(x)}{s^\gamma}\quad\forall (x,s)\in\Omega\times (0,+\infty)\]
where $\gamma\in (0,1)$, $0\lneq\mu\in C(\overline{\Omega})$.
\end{remark}

The last theorem of the section will be concerned with the following problem:
\begin{equation}\label{hproblem}
\begin{cases}
-\Delta u+g(x,u)|\nabla u|^2=h(x), &x\in\Omega,
\\
u>0, &x\in\Omega,
\\
u=0, &x\in\partial\Omega.
\end{cases}
\tag{$H$}
\end{equation}
The result provides a necessary condition for the existence of solution to \eqref{hproblem}. In order to prove it, we will assume, for fixed $\tau>0$, that $g(x,u)$ is sufficiently large near $\partial\Omega$ in the sense of the following condition:
\begin{equation}\label{gtoolarge}
\exists \omega\subset\subset\Omega,\,\, s_0\in (0,1):\quad sg(x,s)\geq\tau\quad\text{a.e. }x\in\Omega\setminus\omega,\,\,\forall s\in (0,s_0).
\tag{$g_2$}
\end{equation}
The theorem states the following:
\begin{theorem}\label{nonexistence}
Let $h\in L^1(\Omega)$ and let $g:\Omega\times (0,+\infty)\to\mathbb{R}$  be a Carath\'eodory function satisfying \eqref{ggeneral} and \eqref{gtoolarge} for some $\tau>1$. Then, every solution $u$ to \eqref{hproblem} satisfies
\[\int_\Omega\frac{|h(x)|}{u}=+\infty.\]
\end{theorem}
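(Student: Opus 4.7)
The plan is to argue by contradiction: assume $\int_\Omega |h|/u<+\infty$ and derive an impossibility. First I would identify the ``good region'' $A = \{x \in \Omega \setminus \overline\omega : u(x) < s_0\}$ on which hypothesis \eqref{gtoolarge} yields $g(x,u(x))u(x) \geq \tau > 1$. Since $u$ is continuous in $\Omega$, $u>0$ there, and $u^\gamma \in H_0^1(\Omega)$ forces $u$ to become small approaching $\partial\Omega$, the set $A$ contains a one-sided neighborhood of $\partial\Omega$. I would then test the weak formulation of \eqref{hproblem} with the admissible test function $\phi = T_k(u^{-1})\zeta^2$, where $T_k(s)=\min\{s,k\}$ and $\zeta \in C_c^\infty(\Omega)$ is nonnegative and supported in $A$ (admissibility comes from Remark~\ref{testfunctionsremark}).

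Expanding and using the pointwise lower bound $g(x,u) \geq \tau/u$ on the support of $\zeta$ yields
$$
(\tau - 1)\int_{\{u > 1/k\}} \zeta^2 \frac{|\nabla u|^2}{u^2} + \tau k \int_{\{u \leq 1/k\}} \zeta^2 \frac{|\nabla u|^2}{u} \leq \int_\Omega h\, T_k(u^{-1})\zeta^2 - 2 \int_\Omega T_k(u^{-1}) \zeta \nabla u \cdot \nabla \zeta.
$$
The cross term is dispatched by Young's inequality in the form $2|AB| \leq \lambda A^2 + \lambda^{-1} B^2$ with $A = \sqrt{T_k(u^{-1})/u}\,\zeta|\nabla u|$ and $B = \sqrt{u\,T_k(u^{-1})}|\nabla \zeta|$; the decisive elementary bound $u\,T_k(u^{-1}) \leq 1$ makes the $B$-contribution uniformly controlled by $\|\nabla \zeta\|_{L^2}$. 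Picking $\lambda=(\tau-1)/2$ absorbs half of each LHS term. Letting $k\to\infty$ (monotone convergence on the left, dominated convergence on the right, using $|h|/u \in L^1(\Omega)$) gives the key estimate
$$
\frac{\tau-1}{2}\int_\Omega \zeta^2 \frac{|\nabla u|^2}{u^2} \leq \int_\Omega \frac{|h|\zeta^2}{u} + \frac{2}{\tau-1}\int_\Omega|\nabla \zeta|^2
$$
valid for every nonnegative $\zeta \in C_c^\infty(\Omega)$ supported in $A$.

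To conclude I would pick $\zeta_n$ of the form $\bigl((d(x,\partial\Omega)-1/n)_+\wedge c\bigr)\eta(x)$, where $\eta$ is a fixed cutoff isolating $\overline{\omega}$ from $\{u \geq s_0\}$. This keeps $\int|\nabla \zeta_n|^2$ uniformly bounded while $\zeta_n$ increases pointwise to a strictly positive function on a one-sided neighborhood of $\partial\Omega$. Monotone convergence in the key estimate then forces a weighted $L^2$-integrability of $\nabla \log u$ up to the boundary; combined with $u^\gamma \in H_0^1(\Omega)$, which entails $\log u \to -\infty$ quasi-everywhere on $\partial\Omega$, this is expected to contradict the trace theory of Sobolev-type spaces. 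The main obstacle is precisely this last step, following \cite[Lemma~2.5]{CMS}: the basic Hardy inequality saturates the key estimate exactly at the scaling $u \sim d(x,\partial\Omega)^\alpha$, so the straightforward weighted bound is consistent with such Hölder profiles. Extracting a genuine contradiction therefore requires either a Moser-type iteration of the key estimate or a finer tuning of $\zeta_n$ that exploits the strict inequality $\tau>1$ to produce an LHS that is quantitatively too large to be absorbed by the boundary cost $\int|\nabla \zeta_n|^2$.
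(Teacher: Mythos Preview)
Your argument stalls precisely where you say it does, and the obstacle is real: with a spatial cutoff $\zeta$ the boundary cost $\int|\nabla\zeta|^2$ scales exactly like the gain in $\int\zeta^2|\nabla u|^2/u^2$ for H\"older profiles $u\sim d(x,\partial\Omega)^\alpha$, so no choice of $\zeta_n$ alone will produce a contradiction. The paper avoids this saturation entirely by two moves you are missing. First, it passes to $w=u^\gamma\in H_0^1(\Omega)$ (the exponent $\gamma$ from the definition of solution), which satisfies an equation of the same type with a new $g_\gamma$ still obeying $sg_\gamma(x,s)\geq\rho>1$ near $\partial\Omega$; the point is that now $w$ is genuinely a finite energy solution, so by Remark~\ref{testfunctionsremark} one may use test functions in $H_0^1(\Omega)\cap L^\infty(\Omega)$ \emph{without} compact support. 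Second, it tests with a pure function of $w$, namely
\[
\varphi_\varepsilon(w)=\begin{cases}\dfrac{1}{w}+\dfrac{w^{\rho-1}-\varepsilon^{\rho-1}}{(\rho-1)w^\rho},& w\geq\varepsilon,\\[2mm] \dfrac{w}{\varepsilon^2},& w<\varepsilon,\end{cases}
\]
which lies in $H_0^1(\Omega)\cap L^\infty(\Omega)$ because $w$ does. There is no $\zeta$ and hence no boundary cost: the computation collapses to
\[
\int_{\{w>\varepsilon\}}\frac{|\nabla w|^2}{w^2}\leq C\left(\int_\Omega\frac{|h|}{u}+1\right),
\]
and letting $\varepsilon\to 0$ gives $\int_\Omega|\nabla w|^2/w^2<\infty$. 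The contradiction then comes from the external result \cite{ZWQ}, which asserts that $\int_\Omega|\nabla w|^2/w^2=+\infty$ for \emph{every} positive $w\in H_0^1(\Omega)$; this is exactly the ``finer'' fact you were reaching for but did not isolate.

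In short, the missing idea is to drop the spatial cutoff in favor of a test function built from the solution itself, which is legitimate only after the preliminary reduction to $H_0^1(\Omega)$; your Moser iteration or tuned $\zeta_n$ would have to reprove \cite{ZWQ} from scratch.
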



\section{A priori estimates}\label{estimates}

We begin the section with two Liouville type results which will be the key points for proving a priori estimates. The first of them is the following.

\begin{lemma}\label{liouvillelemma}
Let $p>1$ and let $h:(0,+\infty)\to[0,+\infty)$ be a continuous function satisfying
\begin{equation*}
sh(s)\leq \sigma\quad\forall s>0
\end{equation*}
for some $\sigma\in(0,1)$. Let us assume that there exists $\alpha>0$ such that the function $\psi:[0,+\infty)\to [0,+\infty)$, defined by
\[\psi(s)=\int_0^s e^{-\int_\alpha^t h(r)dr}dt\quad\forall s\geq 0,\]
satisfies that 
\[s\mapsto\frac{\psi'(s)s^p}{\psi(s)^{2^*-1}}\quad\text{ is decreasing for all } s>0.\]
Then, the following problem
\begin{equation}\label{Xhprob}
\begin{cases}
-\Delta u+h(u)|\nabla u|^2=u^p, &x\in X,
\\
u>0, &x\in X,
\\
u=0, &x\in\partial X,
\end{cases}
\end{equation}
admits no solutions in $H^1_{\mbox{\tiny loc}}(X)\cap C(\overline{X})$, where $X$ denotes either $\mathbb{R}^N$ or $\mathbb{R}^N_+$.
\end{lemma}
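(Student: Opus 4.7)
My plan is to reduce the problem to a purely semilinear one by means of the change of unknown $v=\psi(u)$, which, by construction, kills the first order term, and then to invoke classical Liouville-type nonexistence results for semilinear equations whose nonlinearity is subcritical in the averaged sense that $F(v)/v^{2^*-1}$ is decreasing.

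\textbf{Step 1: the change of unknown eliminates the gradient term.} Note that $\psi\in C^2([0,+\infty))$ is strictly increasing, $\psi(0)=0$, $\psi'(s)=e^{-\int_\alpha^s h(r)\,dr}>0$ and $\psi''(s)=-h(s)\psi'(s)$. Since $sh(s)\leq\sigma<1$, one has $\psi'(s)\geq (s/\alpha)^{-\sigma}$ for $s\geq\alpha$, so $\psi(+\infty)=+\infty$ and $\psi^{-1}$ is defined on $[0,+\infty)$. Setting $v=\psi(u)$, the chain rule and \eqref{Xhprob} give
$$
-\Delta v=-\psi'(u)\Delta u-\psi''(u)|\nabla u|^2=\psi'(u)\bigl(u^p-h(u)|\nabla u|^2\bigr)+h(u)\psi'(u)|\nabla u|^2=\psi'(u)\,u^p.
$$
Since $\psi'$ is locally bounded and $u\in H^1_{\mbox{\tiny loc}}(X)\cap C(\overline X)$, the function $v$ lies in $H^1_{\mbox{\tiny loc}}(X)\cap C(\overline X)$, is positive in $X$ and vanishes on $\partial X$; because the right hand side $\psi'(u)u^p$ is locally bounded, standard elliptic regularity promotes $v$ to a classical solution.

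\textbf{Step 2: semilinear limit equation.} Define
$$F(w)\,:=\,\psi'\bigl(\psi^{-1}(w)\bigr)\bigl(\psi^{-1}(w)\bigr)^p,\qquad w\geq 0.$$
Then $v$ satisfies $-\Delta v=F(v)$ in $X$, with $v>0$ in $X$ and $v=0$ on $\partial X$. Since $\psi$ is strictly increasing, the hypothesis that $s\mapsto \psi'(s)s^p/\psi(s)^{2^*-1}$ is decreasing transfers, via the substitution $w=\psi(s)$, to the statement that $w\mapsto F(w)/w^{2^*-1}$ is strictly decreasing on $(0,+\infty)$, which is precisely the subcritical hypothesis used in Liouville-type results.

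\textbf{Step 3: applying Liouville.} On $X=\mathbb{R}^N$, I would invoke the Liouville theorem for positive classical solutions of $-\Delta v=F(v)$ under the monotonicity of $F(v)/v^{2^*-1}$, which goes back to the scaling/blow-up ideas of \cite{GidasSpruck} and has been extended to this class of nonlinearities. On $X=\mathbb{R}^N_+$, I would apply the moving planes technique in the direction orthogonal to $\partial X$ to obtain monotonicity of $v$ in $x_N$; then the limit $w(x')=\lim_{x_N\to+\infty}v(x',x_N)$ yields a positive solution of $-\Delta_{x'}w=F(w)$ on $\mathbb{R}^{N-1}$, contradicting the full-space Liouville result in one lower dimension.

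\textbf{Main obstacle.} The delicate point is to match the hypotheses of the transformed equation with the precise statement of a Liouville theorem one wishes to invoke: the strict monotonicity of $F(w)/w^{2^*-1}$ is expected to replace the usual strict subcritical power growth, and one has to verify that it is strong enough for the scaling argument of Gidas--Spruck (respectively, for the moving planes argument on the half-space) to carry through without any additional assumption on $F$ near $w=0$ or $w=+\infty$. This should be the technical heart of the lemma.
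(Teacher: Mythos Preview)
Your reduction via the change of unknown $v=\psi(u)$ is exactly what the paper does, and your identification of the key structural hypothesis --- that $w\mapsto F(w)/w^{2^*-1}$ is decreasing --- is the right one. For $X=\mathbb{R}^N$ the paper simply invokes Bianchi's Liouville theorem \cite{Bianchi}, which is proved by moving planes under precisely this monotone-ratio hypothesis; so there is no need to redo a scaling argument \`a la Gidas--Spruck.

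The genuine gap in your proposal is the half-space case. Your Dancer-type strategy (moving planes in $x_N$, then letting $x_N\to+\infty$) presupposes that $v$ is bounded; otherwise the limit $w(x')$ may be $+\infty$ and there is nothing to pass to the equation. No such bound is available here. The paper avoids this entirely: it observes two additional properties of $\varphi=F$, namely
\[
\lim_{t\to 0}\frac{\varphi(t)}{t}=0
\qquad\text{and}\qquad
\varphi(t)\geq c\,\psi^{-1}(t)^{\,p-\sigma}\ \text{for some }c>0,
\]
and then applies the half-space Liouville theorem of Damascelli--Gladiali \cite{DamGlad}, which does not require boundedness of the solution. These two checks are easy (the first follows from $\psi(s)\sim s\psi'(0)$ near $0$ and $p>1$; the second from $\psi'(s)\geq c\,s^{-\sigma}$), but they are exactly the ``additional assumptions on $F$ near $w=0$ or $w=+\infty$'' that you flagged as the obstacle --- and you do need them. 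Replace your Step~3 for $\mathbb{R}^N_+$ by this verification plus the citation, and the proof is complete.
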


\begin{remark}\label{liouvilleremark}
We stress that Lemma~\ref{liouvillelemma} includes the particular case $h(s)=\frac{\sigma}{s}$ with $\sigma<\frac{2^*-1-p}{2^*-2}$. Indeed, in this case, it is easy to check that $\frac{\psi'(s)s^p}{\psi(s)^{2^*-1}}=c s^{p-\sigma-(1-\sigma)(2^*-1)}$ for some constant $c>0$. Thus, it is decreasing if, and only if, $\sigma<\frac{2^*-1-p}{2^*-2}$. Moreover, $\frac{2^*-1-p}{2^*-2}<1$, so $sh(s)=\sigma<1$ for all $s>0$.
\end{remark}

\begin{proof}[Proof of Lemma~\ref{liouvillelemma}]
Arguing by contradiction, assume that there exists a solution $u\in H^1_{\mbox{\tiny loc}}(X)\cap C(\overline{X})$ to \eqref{Xhprob}. Straightforward computations imply that $v=\psi(u)\in H^1_{\mbox{\tiny loc}}(X)\cap C(\overline{X})$ satisfies
\begin{equation}\label{changedXprob}
\begin{cases}
-\Delta v=\varphi(v), &x\in X,
\\
v>0, &x\in X,
\\
v=0, &x\in\partial X,
\end{cases}
\end{equation}
where $\varphi(t)=\psi'(\psi^{-1}(t))\psi^{-1}(t)^p$ for all $t\in \text{Im}(\psi)$. Moreover, classical elliptic regularity theory implies that $v\in C^2(X)$. We claim now that, actually, problem \eqref{changedXprob} admits no solutions $v\in C^2(X)\cap C(\overline{X})$.

Indeed, one can easily deduce that
\[\psi(s)\geq\frac{\alpha^\sigma \left(s^{1-\sigma}-\alpha^{1-\sigma}\right)}{1-\sigma}\quad\forall s>\alpha.\]
Hence, $\lim_{s\to +\infty}\psi(s)=+\infty$. In consequence, the function $\varphi$ is defined in $[0,+\infty)$. On the other hand, it is clear that the function $t\mapsto\frac{\varphi(t)}{t^{2^*-1}}$ is decreasing for all $t>0$. Therefore, in case $X=\mathbb{R}^N$, the claim follows from \cite[Theorem 3]{Bianchi}.

Notice also that $\lim_{t\to 0}\frac{\varphi(t)}{t}=0$ and $\varphi(t)\geq c\psi^{-1}(t)^{p-\sigma}$ for every $t>0$ and for some $c>0$. Hence, the claim also holds true in case $X=\mathbb{R}^N_+$ by virtue of \cite[Theorem 1.3]{DamGlad}.

In any case, we have arrived to a contradiction. The proof is now completed.
\end{proof}

We present here the second Liouville type result which is valid for supersolutions.

\begin{lemma}\label{liouvillelemmasupersol}
Let $p>1$ and $\sigma\leq \frac{N-(N-2)p}{2}$. Then, the following problem
\begin{equation}\label{Xprob}
\begin{cases}
\displaystyle-\Delta u+\sigma\frac{|\nabla u|^2}{u}=u^p, &x\in X,
\\
u>0, &x\in X,
\\
u=0, &x\in\partial X,
\end{cases}
\end{equation}
admits no supersolutions in $H^1_{\mbox{\tiny loc}}(X)\cap C(\overline{X})$ provided $X=\mathbb{R}^N$ . On the other hand, if we assume that $\sigma\leq \frac{N+1-(N-1)p}{2}$, then problem \eqref{Xprob} with $X=\mathbb{R}^N_+$ admits no supersolutions in $H^1_{\mbox{\tiny loc}}(X)\cap C(\overline{X})$.
\end{lemma}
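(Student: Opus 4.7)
The plan is to reduce the gradient-dependent inequality to a purely semilinear differential inequality of the form $-\Delta v\geq c\,v^q$ via a power change of unknown, and then to invoke classical Serrin--Zou type Liouville theorems for positive supersolutions on $\mathbb{R}^N$ and $\mathbb{R}^N_+$. Note first that, since $p>1$, both hypotheses $\sigma\leq\frac{N-(N-2)p}{2}$ and $\sigma\leq\frac{N+1-(N-1)p}{2}$ force $\sigma<1$, so the exponent
\[
q:=\frac{p-\sigma}{1-\sigma}
\]
satisfies $q>1$ and the map $s\mapsto s^{1-\sigma}/(1-\sigma)$ is an increasing bijection from $[0,+\infty)$ onto itself.

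Given a putative positive supersolution $u\in H^1_{\mbox{\tiny loc}}(X)\cap C(\overline X)$, I set $v=u^{1-\sigma}/(1-\sigma)$, which lies in $H^1_{\mbox{\tiny loc}}(X)\cap C(\overline X)$, satisfies $\nabla v=u^{-\sigma}\nabla u$, and vanishes on $\partial X$ in the half-space case. The key step is to observe that, plugging the admissible test function $\psi=u^{-\sigma}\phi$ (for an arbitrary $0\leq\phi\in C_c^\infty(X)$, admissible because $u$ is bounded away from $0$ on the compact set $\text{supp}\,\phi$ by interior positivity) into the weak supersolution inequality for $u$, the contribution $-\sigma\int u^{-\sigma-1}|\nabla u|^2\phi$ produced by expanding $\nabla\psi$ cancels exactly the term $\sigma\int u^{-\sigma-1}|\nabla u|^2\phi$ furnished by the quadratic-gradient lower order term of the equation. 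What remains is precisely
\[
\int_X \nabla v\cdot\nabla\phi\,\geq\,(1-\sigma)^{q}\int_X v^q\phi\qquad\forall\,0\leq\phi\in C_c^\infty(X),
\]
so $v$ is a positive distributional supersolution of $-\Delta v\geq(1-\sigma)^q v^q$ on $X$, vanishing at the boundary when $X=\mathbb{R}^N_+$.

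I close the argument by appealing to the well-known Liouville-type nonexistence results for positive supersolutions of $-\Delta v\geq v^q$. On $\mathbb{R}^N$ these state nonexistence exactly when $q\leq N/(N-2)$ (see e.g.\ \cite{SerrinZou}), and an elementary algebraic manipulation shows that this inequality is equivalent to $\sigma\leq\frac{N-(N-2)p}{2}$. Analogously, in $\mathbb{R}^N_+$ with vanishing boundary datum the sharp threshold becomes $q\leq(N+1)/(N-1)$, equivalent to $\sigma\leq\frac{N+1-(N-1)p}{2}$; this case can be handled either by a direct reference analogous to the one in the proof of Lemma~\ref{liouvillelemma}, or by reflection/doubling reducing to the Serrin--Zou whole-space statement. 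In either case the existence of $v$ contradicts the cited theorem, concluding the proof.

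The step I expect to be most delicate is the rigorous justification of the test function $\psi=u^{-\sigma}\phi$ and of the chain rule identity $\nabla v=u^{-\sigma}\nabla u$ in the low-regularity setting $u\in H^1_{\mbox{\tiny loc}}\cap C(\overline X)$, especially when $\sigma<0$, for which $u^{-\sigma}$ is merely continuous rather than locally Lipschitz. I would handle this as in the proof of Lemma~\ref{liouvillelemma}: interior elliptic regularity applied to the equation satisfied by $v$ upgrades $u$ to $C^{1,\alpha}_{\mbox{\tiny loc}}(X)$, after which a standard truncation/approximation argument on $\phi$ transfers the formal cancellation above to the distributional setting and legitimizes the reduction.
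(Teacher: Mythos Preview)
Your proposal is correct and follows essentially the same route as the paper: both arguments perform the power change $v=c\,u^{1-\sigma}$ to reduce the quasilinear supersolution inequality to $-\Delta v\geq c\,v^{\frac{p-\sigma}{1-\sigma}}$, and then conclude by the classical Liouville theorems for semilinear supersolutions on $\mathbb{R}^N$ and $\mathbb{R}^N_+$ (the paper cites Mitidieri--Pokhozhaev \cite{MitPoho} and Birindelli--Mitidieri \cite{BirMit} rather than Serrin--Zou, but the content is the same). Your write-up is in fact more detailed on the justification of the test function $u^{-\sigma}\phi$ than the paper's proof, which simply records the transformed inequality without further comment.
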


\begin{remark}
Note that $\frac{N+1-(N-1)p}{2}<\frac{N-(N-2)p}{2}<\frac{2^*-1-p}{2^*-2}<1$, so the smallness conditions on $\sigma$ in Remark~\ref{liouvilleremark} and in Lemma~\ref{liouvillelemmasupersol} are gradually more restrictive. We also stress that such conditions on $\sigma$ in Lemma~\ref{liouvillelemmasupersol} are sharp. Indeed, if $\sigma>\frac{N-(N-2)p}{2}$ (resp. $\sigma>\frac{N+1-(N-1)p}{2}$), then one can find explicit supersolutions to \eqref{Xprob} for $X=\mathbb{R}^N$, see \cite{MitPoho} (resp. $X=\mathbb{R}^N_+$, see \cite{BirMit}). 
\end{remark}


\begin{proof}[Proof of Lemma~\ref{liouvillelemmasupersol}]
Reasoning by contradiction, assume that there exists $u\in H^1_{\mbox{\tiny loc}}(X)\cap C(\overline{X})$ a supersolution to \eqref{Xprob}. Then, there is a constant $c>0$ such that $v=c u^{1-\sigma}\in H^1_{\mbox{\tiny loc}}(X)\cap C(\overline{X})$ is a supersolution to 
\begin{equation*}
\begin{cases}
-\Delta v=v^\frac{p-\sigma}{1-\sigma}, &x\in X,
\\
v>0, &x\in X,
\\
v=0, &x\in\partial X.
\end{cases}
\end{equation*}
Hence, if $\sigma\leq\frac{N-(N-2)p}{2}$, then $\frac{p-\sigma}{1-\sigma}\leq \frac{N}{N-2}$, so in case $X=\mathbb{R}^N$ we arrived to a contraction with \cite[Theorem 2.1]{MitPoho}. On the other hand, if $\sigma<\frac{N+1-(N-1)p}{2}$, then $\frac{p-\sigma}{1-\sigma}<\frac{N+1}{N-1}$, so we have again a contradiction with \cite[Theorem 3.1]{BirMit}.
\end{proof}

Let $t\geq 0$, $\lambda>0$, $f:[0,+\infty)\to [0,+\infty)$ be a continuous function satisfying \eqref{boundsf} for some $p>1$ and $g:\Omega\times (0,+\infty)\to [0,+\infty)$ be a Carath\'eodory function satisfying \eqref{sgsmallcond2} for some $\sigma\in (0,1)$. From now and up to the end of the section, we will restrict ourselves to $g\geq 0$ and we will also impose diverse upper bounds on $p$. However, we will show in Section~\ref{existence} that, in most cases, those restrictions can be relaxed for proving existence of solution. 

Let us consider the following auxiliary problem:
\begin{equation}\label{tproblem}
\begin{cases}
\dys-\Delta u+g(x,u)|\nabla u|^2= \lambda f(u)+t u^\sigma, &x\in\Omega,
\\
u> 0, &x\in\Omega,
\\
u=0, &x\in\partial\Omega.
\end{cases}
\tag{$P^t$}
\end{equation}
Note that, in the term $tu^\sigma$, the exponent $\sigma$ is the same number that appears in condition \eqref{sgsmallcond2}. We will derive a priori estimates on the solutions to \eqref{tproblem} which will provide the existence of solution to \eqref{lambdaproblem}.

Next proposition gives an a priori estimate on the parameter $t$ in problem \eqref{tproblem}. 

\begin{proposition}\label{testimate}
Let $\lambda>0$, $f:[0,+\infty)\to [0,+\infty)$ be a continuous function and $g:\Omega\times (0,+\infty)\to[0,+\infty)$ be a Carath\'eodory function. For $p>1$ and $\sigma\in (0,1)$, assume that $f$ satisfies  \eqref{boundsf} and $g$ satisfies \eqref{sgsmallcond2}. Then, there exists $t_0>0$ such that problem \eqref{tproblem} admits no solution for any $t>t_0$.
\end{proposition}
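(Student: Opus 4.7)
The plan is to combine a change of variable with a test against the first Dirichlet eigenfunction $\varphi_1>0$ of $-\Delta$ on $\Omega$ (with eigenvalue $\lambda_1>0$). Setting $v:=u^{1-\sigma}/(1-\sigma)$ for any hypothetical solution $u$ of $(P^t)$, a pointwise computation yields
\[
-\Delta v \;=\; \sigma\, u^{-\sigma-1}|\nabla u|^2 \;-\; u^{-\sigma}\Delta u \;=\; u^{-\sigma-1}|\nabla u|^2\bigl[\sigma - ug(x,u)\bigr] \;+\; \lambda\, u^{-\sigma} f(u) \;+\; t.
\]
Assumption \eqref{sgsmallcond2} gives $ug(x,u)\le\sigma$, so the gradient contribution is nonnegative; assumption \eqref{boundsf} gives $f(u)\ge u^p$. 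Therefore $v$ satisfies the distributional inequality
\[
-\Delta v \;\ge\; \lambda\, u^{p-\sigma} \;+\; t\qquad\text{in }\Omega.
\]
Notice that the exponent $\sigma$ in the forcing $tu^\sigma$ matches the upper bound in \eqref{sgsmallcond2} precisely; this is what collapses both the gradient term and the $tu^\sigma$ source into the clean constant $t$ on the right-hand side.

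Testing the previous inequality against $\varphi_1\zeta_\delta$, where $\zeta_\delta\in C_c^\infty(\Omega)$ is a standard cutoff equal to $1$ on $\{d(x,\partial\Omega)>2\delta\}$ and $0$ on $\{d(x,\partial\Omega)<\delta\}$, and integrating by parts, one obtains in the limit $\delta\to 0^+$
\[
\frac{\lambda_1}{1-\sigma}\int_\Omega u^{1-\sigma}\varphi_1 \;\ge\; \lambda\int_\Omega u^{p-\sigma}\varphi_1 \;+\; t\int_\Omega\varphi_1. \qquad (\ast)
\]
The boundary-layer corrections coming from $\nabla\zeta_\delta$ and $\Delta\zeta_\delta$ vanish because $v=u^{1-\sigma}/(1-\sigma)$ vanishes on $\partial\Omega$ (since $u$ does) while $\varphi_1=O(d(\cdot,\partial\Omega))$ by Hopf's lemma, so their sizes are $o(1)$ and $o(\delta)$ respectively after accounting for $|\nabla\zeta_\delta|\lesssim\delta^{-1}$, $|\Delta\zeta_\delta|\lesssim\delta^{-2}$ and the $O(\delta)$ measure of the support.

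To close the argument we invoke Young's inequality with conjugate exponents $(p-\sigma)/(1-\sigma)>1$ and $(p-\sigma)/(p-1)$, which is legitimate since $p>1$: there exists a constant $C_0=C_0(\lambda,\sigma,p,\lambda_1)$, \emph{independent of $u$}, such that
\[
\frac{\lambda_1}{1-\sigma}\,s^{1-\sigma} \;\le\; \lambda\, s^{p-\sigma} \;+\; C_0\qquad\forall\,s\ge 0.
\]
Plugging $s=u$ into $(\ast)$ and integrating against $\varphi_1$ cancels the two $u^{p-\sigma}$-integrals, leaving $t\int_\Omega\varphi_1\le C_0\int_\Omega\varphi_1$, hence $t\le C_0$. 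Setting $t_0:=C_0$ proves that $(P^t)$ admits no solution for any $t>t_0$.

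The step I expect to be most delicate is the rigorous passage to the limit $\delta\to 0^+$, i.e.\ controlling the boundary-layer error terms. The distributional formulation of the inequality on $v$ is unproblematic on compact subsets (where $u\ge c_\omega>0$ by Definition~\ref{conceptofsolution}, so $v\in H^1_{\mathrm{loc}}(\Omega)$), but the vanishing of the corrections uses the continuity of $u$ up to $\partial\Omega$ (with $u=0$ on $\partial\Omega$) together with the linear decay of $\varphi_1$; once these are in hand, every correction is shown to vanish with an explicit rate, and $(\ast)$ follows.
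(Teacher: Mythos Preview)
Your argument is correct and shares the same backbone as the paper's proof: test the equation with $\varphi_1/u^\sigma$ (equivalently, pass to $v=u^{1-\sigma}/(1-\sigma)$), use \eqref{sgsmallcond2} to discard the gradient term, and close with Young's inequality balancing $u^{1-\sigma}$ against $u^{p-\sigma}$.

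The one genuine technical difference is in how the test function is localized. You take $\varphi_1$ to be the first Dirichlet eigenfunction on the \emph{whole} domain $\Omega$ and multiply by a boundary cutoff $\zeta_\delta$, then argue that the $\nabla\zeta_\delta$ and $\Delta\zeta_\delta$ corrections vanish as $\delta\to 0$. This works, but it relies on knowing that $u\in C(\overline\Omega)$ with $u=0$ on $\partial\Omega$ (which is true here by Lemma~\ref{continuouslemma}, though you should cite it rather than take it for granted). The paper instead fixes a smooth $\omega\subset\subset\Omega$, takes $\varphi_1$ to be the first eigenfunction on $\omega$ extended by zero, and uses $\phi=\varphi_1/u^\sigma$ directly as a compactly supported test function; since $u\ge c_\omega>0$ on $\overline\omega$ by Definition~\ref{conceptofsolution}, no boundary analysis near $\partial\Omega$ is needed at all. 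The only boundary term arises on $\partial\omega$, where Hopf's lemma for $\varphi_1$ gives it a favorable sign that can simply be dropped. The paper's device is a bit more self-contained and avoids invoking the global continuity lemma; your cutoff approach is also standard and fine once that lemma is in hand.
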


\begin{proof}
Let $u$ be a solution to \eqref{tproblem} for some $t>0$. For a fixed smooth open set $\omega\subset\subset\Omega$, let $\lambda_1$ be the principal eigenvalue to the homogeneous Dirichlet eigenvalue problem in $\omega$, and let $\varphi_1$ be any positive associated eigenfunction, i.e. $\lambda_1$ and $\varphi_1$ satisfy
\begin{equation*}
\begin{cases}
-\Delta\varphi_1=\lambda_1\varphi_1,& x\in\omega,
\\
\varphi_1>0,& x\in\omega
\\
\varphi_1=0,& x\in\partial\omega.
\end{cases}
\end{equation*}
If we extend $\varphi_1\equiv 0$ in $\Omega\setminus\omega$, then the function $\phi=\frac{\varphi_1}{u^\sigma}$ belongs to $H_0^1(\Omega)\cap L^\infty(\Omega)$ and has compact support. Taking $\phi$ as test function in \eqref{tproblem} (which is allowed by virtue of Remark~\ref{testfunctionsremark}) we obtain
\begin{equation}\label{testimate1}
\int_\Omega\frac{\nabla u\nabla \varphi_1}{u^\sigma}-\sigma\int_\Omega\frac{\varphi_1|\nabla u|^2}{u^{\sigma+1}}+\int_\Omega g(x,u)\frac{\varphi_1|\nabla u|^2}{u^\sigma}=\lambda\int_\Omega \frac{f(u)}{u^\sigma}\varphi_1+t\int_\Omega\varphi_1.
\end{equation}
On the one hand, it is clear by \eqref{sgsmallcond2} that
\[-\sigma\int_\Omega\frac{\varphi_1|\nabla u|^2}{u^{\sigma+1}}+\int_\Omega g(x,u)\frac{\varphi_1|\nabla u|^2}{u^\sigma}\leq 0.\]
Thus, using also \eqref{boundsf} we deduce from \eqref{testimate1} that 
\begin{equation}\label{testimate2}
t\int_\Omega\varphi_1+\lambda\int_\Omega u^{p-\sigma}\varphi_1\leq\int_\Omega\frac{\nabla u\nabla\varphi_1}{u^\sigma}.
\end{equation}
On the other hand, let us denote as $\nu$ the exterior normal unit vector to $\partial\omega$. Then, Hopf's lemma implies that $\nu\nabla\varphi_1<0$ on $\partial\omega$. Hence, integration by parts in $\int_\omega(-\Delta\varphi_1) u^{1-\sigma}$ and Young's inequality yield
\[\int_\Omega\frac{\nabla u\nabla\varphi_1}{u^\sigma}=\lambda_1\int_\omega\frac{\varphi_1 u^{1-\sigma}}{1-\sigma}+\int_{\partial\omega}\frac{u^{1-\sigma}}{1-\sigma}\nu\nabla\varphi_1<\lambda_1\int_\omega\frac{\varphi_1 u^{1-\sigma}}{1-\sigma}\leq\frac{\lambda}{2}\int_\Omega u^{p-\sigma}\varphi_1+C.\]
In sum, from \eqref{testimate2} we deduce that $t\leq t_0$ for some $t_0>0$, as we wanted to prove.
\end{proof}

Next lemma provides a summability property that the solutions to \eqref{tproblem} satisfy. The interesting point is that such a property becomes better as the singularity of $g$ becomes stronger. 

\begin{lemma}\label{summabilitylemma}
Let $\lambda>0$, $\sigma\in (0,1)$, $f:[0,+\infty)\to [0,+\infty)$ be a continuous function and $g:\Omega\times (0,+\infty)\to [0,+\infty)$ be a Carath\'eodory function satisfying \eqref{ggeneral}. For some $a>0$, $p\geq 1$ and $\tau\in [0,1)$, assume that
\begin{align*}
f(s)&\leq a s^p \quad \forall s\geq 0,
\\
sg(x,s)&\geq \tau \quad \text{a.e. }x\in\Omega,\,\,\forall s>0.
\end{align*} 
Let $u\in H^1_{\mbox{\tiny loc}}(\Omega)\cap L^\infty(\Omega)$ be a solution to \eqref{tproblem} for some $t\geq 0$. Then, $u^\gamma\in H_0^1(\Omega)$ for every $\gamma> \frac{\max\{1-\sigma,1-\tau\}}{2}$.
\end{lemma}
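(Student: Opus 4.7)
The plan is to test the equation in \eqref{tproblem} formally with $\phi=u^{2\gamma-1}$, absorb the quadratic gradient term using the pointwise bound $u\,g(x,u)\geq\tau$, and read off the desired summability of $\nabla u^\gamma$. Writing the weak formulation with such a $\phi$ produces
\[
(2\gamma-1)\int_\Omega u^{2\gamma-2}|\nabla u|^2+\int_\Omega g(x,u)\,u^{2\gamma-1}|\nabla u|^2=\lambda\int_\Omega f(u)\,u^{2\gamma-1}+t\int_\Omega u^{2\gamma-1+\sigma},
\]
and then the hypothesis $u\,g(x,u)\geq\tau$ on the left yields
\[
(2\gamma-1+\tau)\int_\Omega u^{2\gamma-2}|\nabla u|^2\leq\lambda\int_\Omega f(u)\,u^{2\gamma-1}+t\int_\Omega u^{2\gamma-1+\sigma}.
\]
The condition $\gamma>(1-\tau)/2$ makes the coefficient on the left strictly positive, and $\gamma>(1-\sigma)/2$ forces $2\gamma-1+\sigma>0$; together with $u\in L^\infty(\Omega)$ and the bound $f(s)\leq as^p$ with $p\geq 1$, this shows the right-hand side is finite. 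Hence $\int_\Omega|\nabla u^\gamma|^2=\gamma^2\int_\Omega u^{2\gamma-2}|\nabla u|^2\leq C$.

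The test function $u^{2\gamma-1}$ is of course not admissible (it may blow up near $\partial\Omega$ when $2\gamma-1<0$, and in any case lacks compact support), so to justify the computation the plan is to work instead with the truncations $\phi_n:=u^{2\gamma-1}\chi_n(u)$, where $\chi_n$ is a smooth nondecreasing cutoff equal to $0$ on $[0,1/n]$ and to $1$ on $[2/n,+\infty)$. Relying on the H\"older continuity of $u$ up to $\partial\Omega$ coming from the regularity estimates discussed in the Appendix, each $\phi_n$ has compact support in $\Omega$ and belongs to $H^1_0(\Omega)\cap L^\infty(\Omega)$, hence is admissible by Remark~\ref{testfunctionsremark}. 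Plugging $\phi_n$ into the weak formulation produces the inequality above plus an extra nonnegative term $\int_\Omega u^{2\gamma-1}\chi_n'(u)|\nabla u|^2$, which can simply be discarded; monotone convergence in $n$ then delivers the desired estimate.

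To finally promote $\nabla u^\gamma\in L^2(\Omega)$ to $u^\gamma\in H^1_0(\Omega)$, the plan is to exhibit $u^\gamma$ as a strong $H^1$-limit of the admissible sequence $\psi_n:=u^\gamma\chi_n(u)\in H^1_0(\Omega)\cap L^\infty(\Omega)$ with compact support in $\Omega$. The principal gradient contribution $\gamma u^{\gamma-1}\chi_n(u)\nabla u$ converges to $\gamma u^{\gamma-1}\nabla u$ in $L^2(\Omega)$ by dominated convergence thanks to the bound just obtained, while the spurious cutoff piece $u^\gamma\chi_n'(u)\nabla u$ has $L^2$-mass controlled by $C\int_{\{1/n<u<2/n\}}u^{2\gamma-2}|\nabla u|^2$, which tends to $0$ as $n\to\infty$ by absolute continuity of the integral.

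The main technical obstacle will be the singular case $2\gamma-1<0$, in which the approximating $\phi_n$ are uniformly unbounded in $n$ and the blow-up of $u^{2\gamma-1}$ near $\partial\Omega$ must be carefully tracked in every passage to the limit; all the remaining steps are routine manipulations once the choice of test function has been legitimated.
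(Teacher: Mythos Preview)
Your argument is correct and follows essentially the same strategy as the paper: use the continuity of $u$ up to $\partial\Omega$ to build compactly supported test functions that are powers of $u$, exploit the lower bound $sg(x,s)\geq\tau$ to absorb the quadratic gradient term, and pass to the limit via Fatou/monotone convergence. The only cosmetic difference is that the paper first performs the substitution $v=u^\alpha$ with $\alpha\geq\max\{1-\sigma,1-\tau\}$ to reduce to the clean inequality $-\Delta v\leq C$ and then tests with $(v^\beta-\varepsilon)^+$, whereas you test the original equation directly with $u^{2\gamma-1}\chi_n(u)$; the two computations are equivalent.
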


\begin{proof}
First of all, recall that Lemma~\ref{reglemma} in the Appendix below implies that $u\in C(\overline{\Omega})$. Now, for any $\alpha>0$, let us consider the function $v=u^\alpha\in H^1_{\mbox{\tiny loc}}(\Omega)\cap C(\overline{\Omega})$. It is easy to see that $v$ satisfies
\begin{align*}
-\Delta v &=\frac{(1-\alpha-v^\frac{1}{\alpha}g(x,v^\frac{1}{\alpha})) |\nabla v|^2}{\alpha v} + \alpha\lambda v^\frac{\alpha-1}{\alpha} f(v^\frac{1}{\alpha})+\alpha t v^{\frac{\alpha-1+\sigma}{\alpha}}
\\
&\leq \frac{(1-\alpha-\tau)|\nabla v|^2}{\alpha v}+\alpha \lambda a v^\frac{\alpha-1+p}{\alpha}+\alpha t v^{\frac{\alpha-1+\sigma}{\alpha}}, \quad x\in\Omega.
\end{align*}
Moreover, if we take $\alpha\geq\max\{1-\sigma,1-\tau\}$, then
\begin{equation}\label{summabilityeq}
-\Delta v\leq C,\quad x\in\Omega,
\end{equation}
for some constant $C>0$.

For every $\varepsilon,\beta>0$, let us now consider the function $\phi=(v^\beta-\varepsilon)^+$. It is clear that $\phi\in H^1_{\mbox{\tiny loc}}(\Omega)\cap C(\overline{\Omega})$. Furthermore, the continuity of $v$ up to $\partial\Omega$ implies that $\phi$ has compact support in $\Omega$. In sum, $\phi\in H_0^1(\Omega)\cap C(\overline{\Omega})$ and has compact support. Therefore, even though $v$ might not belong to $H_0^1(\Omega)$, it follows from Remark~\ref{testfunctionsremark} that one can take $\phi$ as test function in \eqref{summabilityeq}. Thus, we obtain
\[\int_\Omega \chi_{\{v^\beta\geq\varepsilon\}} \frac{|\nabla v|^2}{v^{1-\beta}}\leq C,\]
for another constant $C>0$ independent of $\varepsilon$. Now we let $\varepsilon$ tend to zero and, by virtue of Fatou's lemma, we deduce
\[\int_\Omega\frac{|\nabla v|^2}{v^{1-\beta}}\leq C.\]
Taking into account that \[\frac{|\nabla v|^2}{v^{1-\beta}}= C \left|\nabla v^\frac{1+\beta}{2}\right|^2=C\left|\nabla u^{\alpha\frac{1+\beta}{2}}\right|^2,\]
then we have proved that $u^{\alpha\frac{1+\beta}{2}}\in H_0^1(\Omega)$ for every $\alpha\geq\max\{1-\sigma, 1-\tau\}$ and every $\beta>0$. This proves the result.
\end{proof}

In the following result we prove a priori estimates on the solutions to \eqref{tproblem} if $f$ satisfies \eqref{boundsf} and \eqref{limitf} and $g$ satisfies \eqref{sgsmallcond} and \eqref{unifconv}. In the proof we adapt the blow-up method due to \cite{GidasSpruck}.

\begin{proposition}\label{gidasspruck}
Let $\lambda>0$, $f:[0,+\infty)\to[0,+\infty)$ be a continuous function and $g:\Omega\times (0,+\infty)\to[0,+\infty)$ be a Carath\'edory function. For $\delta,\tau\geq 0$, $\sigma>0$ and $p\in (1,2^*-1)$, assume that $f$ satisfies \eqref{boundsf} and \eqref{limitf}, and that $g$ satisfies \eqref{sgsmallcond} and \eqref{unifconv}. Then, there exists $C>0$ such that $\|u\|_{L^\infty(\Omega)}\leq C$ for every solution $u$ to \eqref{tproblem} for all $t\in [0,t_0]$, where $t_0>0$ is given by Proposition~\ref{testimate}.
\end{proposition}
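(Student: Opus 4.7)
The plan is to argue by contradiction via the blow-up method of \cite{GidasSpruck}. Suppose that there exist sequences $t_n \in [0, t_0]$ and solutions $u_n$ of \eqref{tproblem} with $t = t_n$ such that $M_n := \|u_n\|_{L^\infty(\Omega)} \to +\infty$, and pick $x_n \in \overline{\Omega}$ with $u_n(x_n) = M_n$ (possible because Lemma~\ref{reglemma} gives $u_n \in C(\overline{\Omega})$). With the classical scaling $\lambda_n = M_n^{-(p-1)/2} \to 0$, I define the rescaled functions
\[
v_n(y) = \frac{u_n(x_n + \lambda_n y)}{M_n}, \qquad y \in \Omega_n := \lambda_n^{-1}(\Omega - x_n).
\]
Then $0 < v_n \le 1$, $v_n(0) = 1$, $v_n$ vanishes on $\partial\Omega_n$, and a direct computation shows that $v_n$ satisfies
\[
-\Delta v_n + \bigl[(M_n v_n)\,g(x_n + \lambda_n y, M_n v_n)\bigr]\,\frac{|\nabla v_n|^2}{v_n} = \lambda M_n^{-p} f(M_n v_n) + t_n M_n^{\sigma - p} v_n^{\sigma} \quad \text{in } \Omega_n.
\]
By \eqref{limitf} the first term on the right tends to $\lambda L v^p$, while the second vanishes since $\sigma<1<p$ and $t_n$ is bounded by $t_0$. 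The bracketed coefficient on the left tends to $\mu(x_0)$ uniformly by \eqref{unifconv}, where, up to a subsequence, $x_n \to x_0 \in \overline{\Omega}$.

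I then split into cases according to whether $d_n := \operatorname{dist}(x_n, \partial\Omega)/\lambda_n$ is unbounded or bounded: in the first case the domains $\Omega_n$ exhaust $\mathbb{R}^N$, while in the second, after a suitable local straightening of $\partial\Omega$ near $x_0$, they converge to a half-space $\mathbb{R}^N_+$. The main obstacle is passing to the limit in the singular term $|\nabla v_n|^2/v_n$, since a zero of the limit would render it ill-defined. To rule this out, I would apply Lemma~\ref{reglemma} from the Appendix (a Ladyzhenskaya--Ural'tseva type H\"older estimate compatible with the singular lower order term controlled by \eqref{sgsmallcond}) to the rescaled equation, which preserves \eqref{sgsmallcond} under the scaling and has locally uniformly bounded right-hand side. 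This yields a uniform $C^{0,\alpha}_{\mathrm{loc}}$ estimate on $\{v_n\}$. Combined with $v_n(0) = 1$, it forces $v_n \ge 1/2$ on a fixed ball around the origin; classical interior or boundary elliptic regularity then upgrades this to a uniform $C^{1,\alpha}_{\mathrm{loc}}$ bound, first near the origin and then propagated by standard continuation arguments, since the coefficient of $|\nabla v_n|^2/v_n$ is uniformly bounded by \eqref{sgsmallcond} wherever $v_n$ stays away from zero.

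A diagonal extraction produces a limit $v \in H^1_{\mathrm{loc}}(X) \cap C(\overline{X})$, with $X = \mathbb{R}^N$ or $\mathbb{R}^N_+$, that is nonnegative, satisfies $v(0) = 1$, vanishes on $\partial X$ in the half-space case, and solves
\[
-\Delta v + \mu(x_0)\,\frac{|\nabla v|^2}{v} = \lambda L v^p \quad \text{in } X.
\]
A strong maximum principle argument forces $v > 0$ throughout $X$, so the singular term is well behaved globally and $v$ is a genuine classical solution. The rescaling $w = (\lambda L)^{1/(p-1)} v$ reduces to the equation of Lemma~\ref{liouvillelemma} with $h(s) = \mu(x_0)/s$; since \eqref{unifconv} ensures $\mu(x_0) < (2^* - 1 - p)/(2^* - 2)$, Remark~\ref{liouvilleremark} guarantees that the hypotheses of Lemma~\ref{liouvillelemma} are met, and the nonexistence statement therein yields the desired contradiction. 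The genuine technical hurdle throughout is the H\"older estimate provided by Lemma~\ref{reglemma}: without it one cannot prevent degeneracy of the singular term $|\nabla v_n|^2/v_n$, and consequently cannot pass to the limit in a meaningful way.
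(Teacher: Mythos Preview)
Your overall blow-up strategy matches the paper's, and the endgame via Lemma~\ref{liouvillelemma} and Remark~\ref{liouvilleremark} is exactly right. However, there is a genuine circularity in your handling of the singular term. You invoke Lemma~\ref{reglemma} to obtain the uniform H\"older estimate on $v_n$, but that lemma requires the solution to satisfy $c\le u\le M$ for some $c>0$: it avoids the singularity precisely by assuming $u$ is already bounded away from zero. Since a lower bound on $v_n$ is exactly what you are trying to establish, Lemma~\ref{reglemma} is not available at this stage. (The same issue arises when you cite it for $u_n\in C(\overline{\Omega})$; the paper uses Lemma~\ref{continuouslemma} instead.) The estimate you actually need is Lemma~\ref{holderestimatelemma}, which exploits \eqref{sgsmallcond} and the change of variable $v_n\mapsto (v_n+\delta_n)^\gamma-\delta_n^\gamma$ with $\gamma\in\bigl(\tfrac{1-\tau}{2},1-\sigma\bigr]$ to produce a H\"older bound \emph{without} any a priori positivity; this is where the additional hypothesis $2\sigma-1<\tau$ in \eqref{sgsmallcond} enters, and also why Lemma~\ref{summabilitylemma} is invoked to guarantee $v_n^\gamma\in H^1$ when $\delta=0$.

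Even granting the H\"older estimate, your mechanism for propagating the lower bound is incomplete: having $v_n\ge 1/2$ on a fixed ball around the origin does not, by ``continuation'', prevent $v_n$ from becoming small on compact sets away from $0$, and $C^{1,\alpha}$ regularity near the origin gives no quantitative obstruction to this. The paper obtains the local lower bound by a separate comparison argument: $w_n=v_n^{1-\sigma}/(1-\sigma)$ is a supersolution of $-\Delta w_n\ge \lambda v_n^{p-\sigma}$ in $\omega$, hence $w_n\ge z_n$ where $-\Delta z_n=\lambda v_n^{p-\sigma}$ with zero boundary data; since $z_n\to z$ uniformly and $z>0$ on every $\omega_0\subset\subset\omega$ by the strong maximum principle (using $v\gneq 0$), one gets $v_n\ge c_{\omega_0}>0$. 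The limit passage is then carried out not via $C^{1,\alpha}$ compactness but through $L^1_{\text{loc}}$ bounds on $\Delta v_n$, Boccardo--Murat a.e.\ gradient convergence, and two applications of Fatou's lemma---once directly, and once after testing with $\frac{v\phi}{v_n}$ to get the reverse inequality.
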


\begin{proof}
Reasoning by contradiction, we assume that there exist two sequences $\{t_n\}\subset [0,t_0]$ and $\{u_n\}$ such that $u_n$ is a solution to $(P^{t_n})$ for all $n$ and $\|u_n\|_{L^\infty(\Omega)}\to+\infty$ as $n\to+\infty$. By virtue of Lemma~\ref{continuouslemma}, we may consider a sequence $\{x_n\}\subset\Omega$ satisfying
\[\|u_n\|_{L^\infty(\Omega)}=u_n(x_n)\quad\forall n,\quad x_n\to x_0\in\overline{\Omega}\text{, up to a subsequence}.\]
We divide the rest of the proof into two parts. In the first of them we consider the case $x_0\in\Omega$, while the second one is devoted to the case $x_0\in\partial\Omega$. In turn, we divide each part into several steps.

\noindent{\bf CASE 1) $x_0\in\Omega$.}

\noindent{\bf Step 1.1) Scaling.}

Denoting $d=\text{dist}(x_0,\partial\Omega)/2>0$ and $\eta_n=\|u_n\|_{L^\infty(\Omega)}^{-\frac{p-1}{2}}$, we define $v_n:B_{d/\eta_n}(0)\to[0,+\infty)$ by
\[v_n(y)=\eta_n^{\frac{2}{p-1}}u_n(x_n+\eta_n y)\quad\forall y\in B_{d/\eta_n}(0).\]
Therefore, $v_n\in H^1(B_{d/\eta_n}(0))\cap L^\infty(B_{d/\eta_n}(0))$ and satisfies the equation
\begin{equation}\label{vneq}
-\Delta v_n+u_n g_n(y,u_n)\frac{|\nabla v_n|^2}{v_n}=\lambda v_n^p\frac{f(u_n)}{u_n^p}+t_n\eta_n^{\frac{2(p-\sigma)}{p-1}} v_n^{\sigma},\quad y\in B_{d/\eta_n}(0),
\end{equation}
where $g_n(y,s)=g(x_n+\eta_n y,s)$ for a.e. $y\in B_{d/\eta_n}(0)$ and $s>0$. Moreover, $\|v_n\|_{L^\infty(B_{d/\eta_n}(0))}=v_n(0)=1.$ Our aim now is to pass to the limit in \eqref{vneq}. In the next step we will prove the a priori estimates that will provide such a limit.

\noindent{\bf Step 1.2) A priori estimates.}

Let us fix $R>0$ and denote $\omega=B_R(0)$. It is clear that $\omega\subset B_{d/\eta_n}(0)$ for every $n$ sufficiently large, so $v_n$ satisfies the equation \eqref{vneq} in $\omega$ and $\|v_n\|_{L^\infty(\omega)}=1$ for $n$ large. Of course, the same thing happens in $B_{2R}(0)$. Notice also that, if $\delta=0$, then Lemma~\ref{summabilitylemma} implies that $u_n^\gamma\in H_0^1(\Omega)$ for all $\gamma\in\left(\frac{\max\{1-\sigma,1-\tau\}}{2},1-\sigma\right]=\left(\frac{1-\tau}{2},1-\sigma\right]$, where this last equality follows from $\tau<\sigma$ in condition \eqref{sgsmallcond}. Therefore, $v_n^\gamma\in H^1(B_{2R}(0))$ for all $\gamma\in \left(\frac{1-\tau}{2},1-\sigma\right]$. This fact, together with conditions \eqref{sgsmallcond} and  \eqref{boundsf}, allow to apply Lemma~\ref{holderestimatelemma} in the Appendix below to deduce that there exist $C>0$, $\alpha\in (0,1)$ such that 
\begin{equation*}
\|v_n\|_{C^{0,\alpha}(\overline\omega)}\leq C
\end{equation*}
for every $n$ large enough. As a consequence, there exists $v\in C(\overline\omega)$ such that, up to a subsequence, \[v_n\to v\text{ uniformly in }\overline\omega.\] Observe that $\|v\|_{L^\infty(\omega)}=1$ so, in particular, $v\not\equiv 0$.

On the other hand, let us consider a function $\phi\in C^1_c(B_{2R}(0))$ such that $0\leq\phi\leq 1$ in $B_{2R}(0)$ and $\phi\equiv 1$ in $\omega$. Now we multiply both sides of \eqref{vneq} by $v_n\phi^2\in H^1_0(B_{2R}(0))\cap L^\infty(B_{2R}(0))$ and integrate by parts, obtaining
\[\int_{B_{2R}(0)}|\nabla v_n|^2\phi^2+2\int_{B_{2R}(0)}v_n\phi\nabla v_n\nabla\phi\leq C,\]
where we have used that $g\geq 0$, $\|v_n\|_{L^\infty(B_{2R}(0))}=1$ and, one more time, condition \eqref{boundsf}. Hence, by Young's inequality we easily deduce that  
\begin{equation*}
\int_\omega|\nabla v_n|^2\leq C\left(\int_{B_{2R}(0)}|\nabla \phi|^2 v_n^2+1\right)\leq C.
\end{equation*}
That is to say, $\|v_n\|_{H^1(\omega)}\leq C$, and then, up to a subsequence, \[v_n\rightharpoonup v\text{ weakly in }H^1(\omega).\]

We will prove next that, for all $\omega_0\subset\subset\omega$, $v_n$ is bounded from below in $\overline{\omega_0}$ by a positive constant independent of $n$. The approach by comparison due to \cite{MA} is valid here. Indeed, it is straightforward to see that the function $w_n=\frac{v_n^{1-\sigma}}{1-\sigma}\in H^1(\omega)\cap L^\infty(\omega)$ satisfies
\[\int_{\omega} \nabla w_n \nabla\phi= \lambda\int_{\omega} v_n^{p-\sigma}\frac{f(u_n)}{u_n^p}\phi+\int_\omega(\sigma-u_n g_n(y,u_n))\frac{|\nabla v_n|^2\phi}{v_n^{\sigma+1}}+t_n\eta_n^\frac{2(p-\sigma)}{p-1}\int_\omega\phi\]
for all $\phi\in C_c^1(\omega)$. Therefore, using conditions \eqref{sgsmallcond} and \eqref{boundsf}, we derive
\[\int_{\omega} \nabla w_n \nabla\phi\geq \lambda\int_{\omega} v_n^{p-\sigma}\phi\]
for all $0\leq\phi\in C_c^1(\omega)$. On the other hand, let $z_n\in H_0^1(\omega)\cap C(\overline{\omega})$ be the unique solution to
\[
\begin{cases}
\displaystyle-\Delta z_n=\lambda v_n^{p-\sigma}, &y\in\omega,
\\
z_n=0, &y\in\partial\omega.
\end{cases}
\]
It is clear that $z_n\to z$ uniformly in $\overline{\omega}$ and $z_n\rightharpoonup z$ weakly in $H_0^1(\omega)$ for some $z\in H_0^1(\omega)\cap C(\overline{\omega})$. In consequence, $z$ satisfies
\[
\begin{cases}
\displaystyle-\Delta z=\lambda v^{p-\sigma}, &y\in\omega,
\\
z=0, &y\in\partial\omega.
\end{cases}
\]
Since $v\gneq 0$ in $\omega$, the strong maximum principle implies that, for every $\omega_0\subset\subset\omega$, there exists $c>0$ such that $z\geq c$ in $\omega_0$. Besides, by comparison, $w_n\geq z_n$ in $\overline{\omega}$. Hence, the uniform convergence of $\{z_n\}$ implies that, for every $\varepsilon>0$, we may take $n$ large enough so that $v_n^{1-\sigma}\geq(1-\sigma)(z-\varepsilon)$ in $\omega$. In sum, by choosing $\varepsilon=\frac{c}{2}$ we conclude that

\begin{equation}\label{lowerlocalestimate}
\forall\omega_0\subset\subset\omega,\,\,\exists c_{\omega_0}>0:\quad v_n\geq c_{\omega_0}, \quad y\in\omega_0,\,\,\forall n\text{ large.}
\end{equation}

From the previous estimates, it is straightforward to prove that $\{\Delta v_n\}$ is bounded in $L^1_{\mbox{\tiny loc}}(\omega)$. Then, \cite{BoccMur} implies that, passing to a subsequence, 
\[\nabla v_n\to\nabla v,\quad y\in\omega.\]

We are ready now to pass to the limit in \eqref{vneq}. 

\noindent{\bf Step 1.3) Passing to the limit.}

We already know that $v_n\geq c_{\omega_0}>0$ in $\omega_0\subset\subset\omega$. Moreover, $\|u_n\|_{L^\infty(\omega)}\to +\infty$. Therefore, $u_n\to+\infty$ locally uniformly in $\omega$. Then, by \eqref{unifconv} we deduce that $|u_n g_n(y,u_n)-\mu(x_n+\eta_n y)|\to 0$ locally uniformly in $\omega$. In consequence, the continuity of $\mu$ yields
\[u_n g_n(y,u_n)\to\mu(x_0)\quad\text{ locally uniformly in }\omega.\] 
In sum, we have that
\[0\leq u_n g_n(y,u_n)\frac{|\nabla v_n|^2}{v_n}\to\mu(x_0)\frac{|\nabla v|^2}{v}\quad\text{ pointwise in }\omega.\]
Furthermore, it follows from conditions \eqref{limitf} and \eqref{boundsf} and from the Dominated Convergence Theorem that 
\[\int_\omega v_n^p\frac{f(u_n)}{u_n^p}\phi\to\int_\omega v^p\phi\quad\forall\phi\in C_c^1(\omega).\]
Let us take $\phi\in C_c^1(\omega)$ such that $\phi\geq 0$ as test function in the weak formulation of \eqref{vneq}. By virtue of Fatou's lemma and using the convergences that we have proved, it is immediate to show that
\[\int_\omega\nabla v\nabla\phi+\mu(x_0)\int_\omega\frac{|\nabla v|^2}{v}\phi\leq \lambda\int_\omega v^p\phi.\]
If we take now $\frac{v\phi}{v_n}\in H_0^1(\omega)\cap L^\infty(\omega)$ as test function in \eqref{vneq}, we obtain
\begin{align*}
\int_\omega\frac{\nabla v_n\nabla v}{v_n}\phi &-\int_\omega\frac{v|\nabla v_n|^2}{v_n^2}\phi+\int_\omega\frac{v}{v_n}\nabla v_n\nabla\phi+\int_\omega u_n g_n(y,u_n)\frac{v|\nabla v_n|^2}{v_n^2}\phi
\\
&=\lambda\int_\omega v_n^{p-1}v\frac{f(u_n)}{u_n^p}\phi+t_n\eta_n^\frac{2(p-\sigma)}{p-1}\int_\omega\frac{v\phi}{v_n^{1-\sigma}}.
\end{align*}
Observe that
\[0\leq (1-\sigma)\frac{v|\nabla v_n|^2\phi}{v_n^2}\leq (1-u_n g_n(y,u_n))\frac{v|\nabla v_n|^2\phi}{v_n^2}\to(1-\mu(x_0))\frac{|\nabla v|^2}{v}\phi.\]
Then, again by Fatou's lemma we derive 
\[\int_\omega\nabla v\nabla\phi+\mu(x_0)\int_\omega\frac{|\nabla v|^2}{v}\phi\geq \lambda\int_\omega v^p\phi.\]
That is to say, $v\in H^1(\omega)\cap C(\overline{\omega})$ satisfies 
\[-\Delta v+\mu(x_0)\frac{|\nabla v|^2}{v}=\lambda v^p,\quad y\in\omega.\]

\noindent{\bf Step 1.4) Conclusion.}

Since $\omega=B_R(0)$ for arbitrary $R>0$, then a standard diagonal argument (see \cite{GidasSpruck}) implies that $v$ is well-defined in $\mathbb{R}^N$, it belongs to $H^1_{\mbox{\tiny loc}}(\mathbb{R}^N)\cap C(\mathbb{R}^N)$ and it satisfies
\[-\Delta v+\mu(x_0)\frac{|\nabla v|^2}{v}=\lambda v^p,\quad y\in\mathbb{R}^N.\]
Furthermore, it is straightforward to check that the function $w=\lambda^{\frac{1}{p-1}}v$ satisfies 
\[-\Delta w+\mu(x_0)\frac{|\nabla w|^2}{w}=w^p,\quad y\in\mathbb{R}^N.\]
This is impossible by virtue of Lemma~\ref{liouvillelemma} (see also Remark~\ref{liouvilleremark}).

\noindent{\bf CASE 2) $x_0\in\partial\Omega$.}

\noindent{\bf Step 2.1) Scaling.}

Recall that we are assuming that there exist sequences $\{t_n\}\subset[0,t_0]$, $\{u_n\}$ and $\{x_n\}\subset\Omega$ such that $u_n$ is a solution to $(P^{t_n})$ for all $n$, $\|u_n\|_{L^\infty(\Omega)}=u_n(x_n)\to +\infty$ as $n\to+\infty$ and $x_n\to x_0$ for some $x_0\in\partial\Omega$. Taking advantage of the smoothness of $\partial\Omega$, we are allowed to perform a convenient change of coordinates in such a way that $u_n$ is a solution to a similar problem except that $\partial\Omega$ becomes flat near $x_0$ (see Lemma~\ref{straighteninglemma} in the Appendix below for the detailed proof). In other words, we may assume without loss of generality that $u_n$ is a solution to $(R^{t_n})$ for all $n$, where
\begin{equation}
\begin{cases}
-\div(M(x)\nabla v)+b(x)\nabla v+g(x,v)M(x)\nabla v\nabla v=\lambda f(v)+tv^\sigma,\quad &x\in \Omega,
\\
v>0,\quad &x\in \Omega,
\\
v=0,\quad &x\in \Gamma,
\end{cases}
\tag{$R^t$}
\end{equation}
with $\Omega\subset\mathbb{R}^N_+$, $\emptyset\not=\Gamma\subset\partial\Omega\cap\partial\mathbb{R}^N_+$ being open as a subset of $\partial\mathbb{R}^N_+$ and connected, $M\in C^1(\overline{\Omega})^{N\times N}$ being uniformly elliptic, and $b\in C(\overline{\Omega})^N$.

Since $\Gamma$ is open in $\partial\mathbb{R}^N_+$, then $d_n=\text{dist}(x_n,\partial\Omega)=\text{dist}(x_n,\Gamma)=x_{n,N}$ for all $n$ large enough. Arguing as in the previous case, we define
\[v_n(y)=\eta_n^{\frac{2}{p-1}}u_n(x_n+\eta_n y)\quad\forall y\in\Omega_n\cup\Gamma_n,\]
where $\eta_n=\|u_n\|^{-\frac{p-1}{2}}_{L^\infty(\Omega)},$ $0\in \Omega_n=B_{d/\eta_n}(0)\cap\left\{y_N>-d_n/\eta_n\right\}$ and $\Gamma_n=B_{d/\eta_n}(0)\cap\left\{y_N=-d_n/\eta_n\right\}$ for some $d>0$. It is not difficult to see that $v_n$ is well-defined for all $n$ large enough and it satisfies  
\begin{equation*}
\begin{cases}
\displaystyle -\div(M_n(y)\nabla v_n)+b_n(y)\nabla v_n+g_n(y,u_n)u_n\frac{M_n(y)\nabla v_n\nabla v_n}{v_n} &
\\
\displaystyle =\lambda v_n^p \frac{f(u_n)}{u_n^p}+\eta_n^{\frac{2(p-\sigma)}{p-1}} t_n v_n^{\sigma}, &y\in\Omega_n,
\\
v_n>0, &x\in\Omega_n,
\\
v_n=0, &y\in\Gamma_n,
\end{cases}
\end{equation*}
where $M_n(y)=M(x_n+\eta_n y)$, $b_n(y)=\eta_n b(x_n+\eta_n y)$ and $g_n(y,\cdot)=g(x_n+\eta_n y,\cdot)$. 

Now, if $\{d_n/\eta_n\}$ is unbounded, we can extract a subsequence such that $d_n/\eta_n\to+\infty$ as $n\to+\infty$. In this case, $\bigcup_{n\in\mathbb{N}}\Omega_n=\mathbb{R}^N$, so we can argue as in the case $x_0\in\Omega$ without relevant changes and arrive to a contradiction. 

Let us assume now that $\{d_n/\eta_n\}$ is bounded. Then, up to a (not relabeled) subsequence, $d_n/\eta_n\to \kappa$ for some $\kappa\geq 0$. Thus, $\bigcup_{n\in\mathbb{N}}\Omega_n=\{y_N>\kappa\}$. Let us prove the estimates in this new situation. 

\noindent{\bf Step 2.2) A priori estimates.}

Let us denote 
\[T_n=(0,...,0,d_n/\eta_n)\in\mathbb{R}^N,\quad \Omega_n'=B_{d/\eta_n}(T_n)\cap \mathbb{R}^N_+,\quad \Gamma_n'=B_{d/\eta_n}(T_n)\cap \partial\mathbb{R}^N_+.\] Observe that $\bigcup_{n\in\mathbb{N}}\Omega_n'=\mathbb{R}^N_+$ and $\bigcup_{n\in\mathbb{N}}\Gamma_n'=\partial\mathbb{R}^N_+$. Let us define $w_n:\Omega_n'\cup\Gamma_n'\to [0,+\infty)$ by 
\[w_n(y)=v_n(y-T_n)\quad\forall y\in\Omega_n'\cup\Gamma_n'.\]
Obviously, $w_n$ satisfies 
\begin{equation}
\label{translatedproblem}
\begin{cases}
\displaystyle -\div(M_n'(y)\nabla w_n)+ b_n'(y)\nabla w_n+g'_n\left(y,\frac{w_n}{\eta_n^\frac{2}{p-1}}\right)\frac{1}{\eta_n^\frac{2}{p-1}}M_n'(y)\nabla v_n\nabla v_n &
\\
\displaystyle =\lambda \eta_n^\frac{2p}{p-1} f\left(\frac{w_n}{\eta_n^\frac{2}{p-1}}\right)+\eta_n^{\frac{2(p-\sigma)}{p-1}} t_n w_n^{\sigma}, &y\in\Omega'_n,
\\
w_n>0, &x\in\Omega'_n,
\\
w_n=0, &y\in\Gamma_n',
\end{cases}
\end{equation}
where $M_n'(y)=M_n(y-T_n)$, $b_n'(y)=b_n(y-T_n)$ and $g_n'(y,\cdot)=g_n(y-T_n,\cdot)$. We also have that $\|w_n\|_{L^\infty(\Omega_n')}=w_n(T_n)=1$ for all $n$. 

Let us denote $T_\infty=(0,...,0,\kappa)\in\mathbb{R}^N$ and, for $R>\kappa$, let us consider the sets 
\[\omega=B_R(T_\infty)\cap\mathbb{R}^N_+,\quad \beta=B_R(T_\infty)\cap\partial\mathbb{R}^N_+\quad \omega_1=B_{2R}(T_\infty)\cap\mathbb{R}^N_+,\quad \beta_1=B_{2R}(T_\infty)\cap \partial\mathbb{R}^N_+.\]It is easy to see that, for all $n$ large enough, 
\begin{equation*}
T_n\in\omega\subset\omega_1\subset\Omega_n',\quad\quad0\in\beta\subset\beta_1\subset \Gamma_n',\quad\quad \overline{\omega}\subseteq\omega_1\cup\beta_1.
\end{equation*}
In particular, $w_n$ satisfies problem \eqref{translatedproblem} by changing $\Omega_n'$ with $\omega_1$ and $\Gamma_n'$ with $\beta_1$, and further, $\|w_n\|_{L^\infty(\omega_1)}=1$ for all $n$ large enough. Thus, it follows again from Lemma~\ref{holderestimatelemma} in the Appendix (thanks to conditions \eqref{sgsmallcond} and \eqref{boundsf} and to Lemma~\ref{summabilitylemma}) that there exist $C>0,\alpha\in(0,1)$ such that 
\[\|w_n\|_{C^{0,\alpha}(\overline{\omega})}\leq C\]
for every $n$ large enough. As a consequence, there exists $v\in C(\overline{\omega})$ such that, up to a subsequence, $w_n\to v$ uniformly in $\overline{\omega}$. In particular, $v=0$ on $\beta$.

On the other hand, from the H\"older estimate one can also deduce that $\kappa>0$. Indeed, 
\[1=|w_n(T_n)-w_n(0)|\leq C|T_n|^\alpha=C(d_n/\eta_n)^\alpha\to C\kappa^\alpha.\]

Next, an estimate on $\{w_n\}$ in $H^1_{\mbox{\tiny loc}}(\omega)$ can be proven as in CASE 1, so that $w_n\rightharpoonup v$ weakly in $H^1_{\mbox{\tiny loc}}(\omega)$, up to a subsequence. Moreover, the same arguments are valid to prove that $\{w_n\}$ satisfies also \eqref{lowerlocalestimate} and, furthermore, that $\nabla w_n\to\nabla v$ a.e. in $\omega$. 

\noindent{\bf Step 2.3) Passing to the limit.}

We can now pass to the limit as in CASE 1 and deduce that $v\in H^1_{\mbox{\tiny loc}}(\omega)\cap C(\overline{\omega})$ is a solution to the following problem:
\begin{equation*}
\begin{cases}
\displaystyle-\div(M(x_0)\nabla v)+\mu(x_0)\frac{M(x_0)\nabla v\nabla v}{v}=\lambda v^p,\quad &y\in \omega,
\\
v>0, &y\in \omega,
\\
v=0,\quad &y\in \beta.
\end{cases}
\end{equation*}
Actually, as $R$ is arbitrary, the diagonal argument (see \cite{GidasSpruck}) implies that $v\in H^1_{\mbox{\tiny loc}}(\mathbb{R}^N_+)\cap C\left(\overline{\mathbb{R}^N_+}\right)$ and it satisfies
\begin{equation*}
\begin{cases}
\displaystyle-\div(M(x_0)\nabla v)+\mu(x_0)\frac{M(x_0)\nabla v\nabla v}{v}=\lambda v^p,\quad &y\in \mathbb{R}^N_+,
\\
v>0, &y\in \mathbb{R}^N_+,
\\
v=0,\quad &y\in \partial\mathbb{R}^N_+.
\end{cases}
\end{equation*}

\noindent{\bf Step 2.4) Conclusion.}

Observe that, if we denote $M(x_0)=(m_{ij})$ for $i,j=1,...,N$, then the previous equation may be written as
\[\displaystyle\sum_{i,j=1}^N m_{ij}\frac{\partial^2 v}{\partial y_i\partial y_j}+\frac{\mu(x_0)}{v}\sum_{i,j=1}^N m_{ij}\frac{\partial v}{\partial y_i}\frac{\partial v}{\partial y_j}=\lambda v^p,\quad y\in\mathbb{R}^N_+.\]
Since $i,j$ commute in both $\frac{\partial^2 v}{\partial y_i\partial y_j}$ and $\frac{\partial v}{\partial y_i}\frac{\partial v}{\partial y_j}$, then a simple change of coordinates (see the conclusion of Case~1 in Section~2 of \cite{GidasSpruck} for the details) leads to finding a solution $w\in H^1_{\mbox{\tiny loc}}(\mathbb{R}^N_+)\cap C\left(\overline{\mathbb{R}^N_+}\right)$ to
\begin{equation*}
\begin{cases}
\displaystyle-\Delta w+\mu(y_0)\frac{|\nabla w|^2}{w}=w^p,\quad &y\in \mathbb{R}^N_+,
\\
w>0, &y\in \mathbb{R}^N_+,
\\
w=0,\quad &y\in \partial\mathbb{R}^N_+,
\end{cases}
\end{equation*}
for some $y_0\in\partial\Omega$. This contradicts Lemma~\ref{liouvillelemma} (see also Remark~\ref{liouvilleremark}). The proof is concluded.
\end{proof}

The following result, as Proposition~\ref{gidasspruck}, provides a priori  estimates on the solutions to \eqref{tproblem}. The difference lies on the fact that we do not impose the limit conditions at infinity \eqref{unifconv} and \eqref{limitf} at the expense of making a stronger restriction on $\sigma,p$.

\begin{proposition}\label{gidasspruck2}
Let $\lambda>0$, $f:[0,+\infty)\to[0,+\infty)$ be a continuous function and $g:\Omega\times (0,+\infty)\to [0,+\infty)$ be a Carath\'eodory function. For $\delta,\tau\geq 0$, $\sigma>0$ and $p\in\left(1,\frac{N+1}{N-1}\right)$, assume that $f$ satisfies \eqref{boundsf} and $g$ satisfies \eqref{sgsmallcond}. Assume in addition that $\sigma\leq\frac{N+1-(N-1)p}{2}$. Then, there exists $C>0$ such that $\|u\|_{L^\infty(\Omega)}\leq C$ for every solution $u$ to \eqref{tproblem} for all $t\in [0,t_0]$, where $t_0>0$ is given by Proposition~\ref{testimate}.
\end{proposition}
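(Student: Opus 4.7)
The plan is to run the blow-up argument of Proposition~\ref{gidasspruck} essentially verbatim up to (but not including) the identification of the limiting equation, and to replace that step by a supersolution passage to the limit. The only point in Proposition~\ref{gidasspruck} where \eqref{unifconv} was needed was to identify $\lim_n u_n g_n(y,u_n) = \mu(x_0)$ and thereby obtain an \emph{equation} for the blow-up limit $v$, to which Lemma~\ref{liouvillelemma} was then applied. Since \eqref{unifconv} is no longer available, I will settle for a supersolution and invoke Lemma~\ref{liouvillelemmasupersol}, which only asks for the one-sided bound $u_n g_n(y,u_n)\le\sigma$ already contained in \eqref{sgsmallcond}.

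Concretely, arguing by contradiction I take sequences $t_n\in[0,t_0]$, $u_n$ solving $(P^{t_n})$, and $x_n\in\Omega$ with $\|u_n\|_{L^\infty(\Omega)}=u_n(x_n)\to+\infty$ and $x_n\to x_0\in\overline\Omega$. I rescale by $\eta_n=\|u_n\|_{L^\infty(\Omega)}^{-(p-1)/2}$, setting $v_n(y)=\eta_n^{2/(p-1)}u_n(x_n+\eta_n y)$ and straightening $\partial\Omega$ near $x_0$ in the boundary case exactly as in Proposition~\ref{gidasspruck}. Since the local Hölder estimate supplied by Lemma~\ref{holderestimatelemma}, the $H^1_{\mbox{\tiny loc}}$ bound, the local positive lower bound \eqref{lowerlocalestimate}, and the a.e.\ convergence of $\nabla v_n$ only used \eqref{sgsmallcond} and \eqref{boundsf}, they remain valid here. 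They produce a nontrivial limit $v\in H^1_{\mbox{\tiny loc}}(X)\cap C(\overline X)$ on either $X=\mathbb{R}^N$ or $X=\mathbb{R}^N_+$, with $v(0)=1$, $v>0$ on $X$, and $v=0$ on $\partial X$ in the boundary case.

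The new step is the passage to the limit. From $u_n g_n(y,u_n)\le\sigma$, $f(u_n)\ge u_n^p$ and $t_n\eta_n^{2(p-\sigma)/(p-1)}v_n^\sigma\ge 0$ one obtains
\[
-\Delta v_n+\sigma\frac{|\nabla v_n|^2}{v_n}\;\ge\;-\Delta v_n+u_n g_n(y,u_n)\frac{|\nabla v_n|^2}{v_n}\;\ge\;\lambda v_n^p.
\]
The cleanest way to pass this inequality to the limit is the change of unknown $w_n=v_n^{1-\sigma}/(1-\sigma)$, which is legitimate since $\sigma\in(0,1)$ and $v_n$ is locally bounded away from zero. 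A direct computation gives
\[
-\Delta w_n \;=\; v_n^{-\sigma}\Bigl(-\Delta v_n+\sigma\tfrac{|\nabla v_n|^2}{v_n}\Bigr)\;\ge\;\lambda v_n^{p-\sigma}\;=\;\lambda\bigl((1-\sigma)w_n\bigr)^{\frac{p-\sigma}{1-\sigma}},
\]
an inequality \emph{free} of gradient terms. The local positivity of $v_n$ together with the $H^1_{\mbox{\tiny loc}}$ bound yields $w_n\rightharpoonup w$ weakly in $H^1_{\mbox{\tiny loc}}(X)$ and locally uniformly, so testing against an arbitrary $0\le\phi\in C^1_c(X)$ and using weak convergence on the left and uniform convergence on the right, I obtain
\[
-\Delta w\;\ge\;\lambda(1-\sigma)^{\frac{p-\sigma}{1-\sigma}}w^{\frac{p-\sigma}{1-\sigma}}\quad\text{in }X,\qquad w>0,\quad w=0\text{ on }\partial X\text{ (boundary case)}.
\]
Undoing the change of unknown, $v\in H^1_{\mbox{\tiny loc}}(X)\cap C(\overline X)$ is a positive supersolution of $-\Delta v+\sigma|\nabla v|^2/v=\lambda v^p$ in $X$.

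The rescaling $\tilde v=\lambda^{1/(p-1)}v$ turns $v$ into a positive supersolution of \eqref{Xprob} lying in $H^1_{\mbox{\tiny loc}}(X)\cap C(\overline X)$. Since $p>1$ gives $\frac{N+1-(N-1)p}{2}\le\frac{N-(N-2)p}{2}$, the assumption $\sigma\le\frac{N+1-(N-1)p}{2}$ simultaneously covers both thresholds in Lemma~\ref{liouvillelemmasupersol}, whence a contradiction in both cases $X=\mathbb{R}^N$ and $X=\mathbb{R}^N_+$. I expect the only genuinely new point compared with Proposition~\ref{gidasspruck} to be conceptual: without \eqref{unifconv}, only the upper bound on $u_n g_n$ survives the limit, and the transformation $v_n\mapsto v_n^{1-\sigma}/(1-\sigma)$ is precisely the device that converts that one-sided information into a clean distributional supersolution — at the unavoidable cost of the sharper smallness condition $\sigma\le\frac{N+1-(N-1)p}{2}$ required by Lemma~\ref{liouvillelemmasupersol}.
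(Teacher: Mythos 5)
Your proposal is correct and follows essentially the same route as the paper's own proof: run the blow-up of Proposition~\ref{gidasspruck}, deduce the one-sided supersolution inequality for $v_n$ from \eqref{sgsmallcond} and \eqref{boundsf}, pass to the limit, and invoke Lemma~\ref{liouvillelemmasupersol}. The only difference is mechanical — the paper passes the inequality to the limit by testing with $v\phi/v_n$ and applying Fatou's lemma once, whereas you change unknown to $w_n=v_n^{1-\sigma}/(1-\sigma)$ (the very transformation used inside Lemma~\ref{liouvillelemmasupersol}, applied one step earlier) to obtain a gradient-free distributional inequality that passes to the limit by weak $H^1_{\mbox{\tiny loc}}$ and locally uniform convergence alone.
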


\begin{proof}
The proof is very similar to that of Proposition~\ref{gidasspruck}. Here we give only a sketch.

Arguing by contradiction, we assume that there exist two sequences $\{t_n\}\subset [0,t_0]$ and $\{u_n\}$ such that $u_n$ is a solution to $(P^{t_n})$ for all $n$ and $\|u_n\|_{L^\infty(\Omega)}\to+\infty$ as $n\to+\infty$. We also consider a sequence $\{x_n\}\subset\Omega$ satisfying
\[\|u_n\|_{L^\infty(\Omega)}=u_n(x_n)\quad\forall n,\quad x_n\to x_0\in\overline{\Omega}\text{, up to a subsequence};\]
this can be done by virtue of Lemma~\ref{continuouslemma}. We denote $\eta_n=\|u_n\|_{L^\infty(\Omega)}^{-\frac{p-1}{2}}$. Let us assume that $x_0\in\partial\Omega$ (we omit the simpler case $x_0\in\Omega$). Arguing as in the proof of Proposition~\ref{gidasspruck}, we assume without loss of generality that $u_n$ is a solution to $(R^{t_n})$ for all $n$, with $\Omega=V\subset\mathbb{R}^N_+$ and $\Gamma\subset\partial\Omega\cap\partial\mathbb{R}^N_+$, being $\Gamma$ open as a subset of $\partial\mathbb{R}^N_+$ and connected. Thus, there exists a sequence of bounded domains $\{\Omega_n\}$ satisfying, for every $n$, that $0\in\Omega_n$, $x_n+\eta_n y\in\Omega$ for all $y\in \Omega_n$ and $\bigcup_{n\in\mathbb{N}}\Omega_n=X$, where $X$ may be either $\mathbb{R}^N$ or $\{y_N>\kappa\}$ for some $\kappa\geq 0$.

In any case, we define $v_n:\Omega_n\to\mathbb{R}$ by
\[v_n(y)=\eta_n^{\frac{2}{p-1}}u_n(x_n+\eta_n y)\quad\forall y\in \Omega_n.\]
It is easy to check that $v_n$ satisfies the equation 
\begin{align*}
-\div(M_n(y)\nabla v_n)&+b_n(y)\nabla v_n+u_n g_n(y,u_n)\frac{M_n(y)\nabla v_n\nabla v_n}{v_n}
\\
&=\lambda v_n^p\frac{f(u_n)}{u_n^p}+\eta_n^{\frac{2(p-\sigma)}{p-1}} t_n v_n^{\sigma},\quad y\in\Omega_n,
\end{align*}
where $M_n(y)=M(x_n+\eta_n y)$, $b_n(y)=\eta_n b(x_n+\eta_n y)$ and $g_n(y,\cdot)=g(x_n+\eta_n y,\cdot)$. From the previous equation one obtains the same local estimates as in Proposition~\ref{gidasspruck}. However, in this case one cannot pass to the limit directly in the equation. We overcome this issue by deducing from $sg_n(x,s)\leq\sigma$, $M_n(y)\nabla v_n\nabla v_n\geq 0$ and $f(u_n)\geq u_n^p$ the inequality
\[-\div(M_n(y)\nabla v_n)+b_n(y)\nabla v_n+\sigma\frac{M_n(y)\nabla v_n\nabla v_n}{v_n}
\geq\lambda v_n^p,\quad y\in\Omega_n.\]
In fact, one can pass to the limit in the previous inequality as in Proposition~\ref{gidasspruck} (but using Fatou's lemma only once). Applying after that a convenient change of coordinates, we find a supersolution $v\in H^1_{\mbox{\tiny loc}}(X)\cap C(\overline{X})$ to \eqref{Xprob}, 
where either $X=\mathbb{R}^N$ or $X=\mathbb{R}^N_+$. This is a contradiction with Lemma~\ref{liouvillelemmasupersol}.
\end{proof}

Next proposition provides similar estimates as Proposition~\ref{gidasspruck} and Proposition~\ref{gidasspruck2}. The novelty is that $g$ satisfies now a limit condition at infinity only in a neighborhood of $\partial\Omega$. This means that we need to impose stronger restrictions on $\sigma,p$ than in Proposition~\ref{gidasspruck}, but milder than in Proposition~\ref{gidasspruck2}.

\begin{proposition}\label{gidasspruck3}
Let $\lambda>0$, $f:[0,+\infty)\to[0,+\infty)$ be a continuous function and $g:\Omega\times (0,+\infty)\to [0,+\infty)$ be a Carath\'eodory function. For $\delta,\tau\geq 0$, $\sigma>0$ and $p\in \left(1,\frac{N}{N-2}\right)$, assume that $f$ satisfies \eqref{boundsf} and \eqref{limitf}, and that $g$ satisfies \eqref{sgsmallcond} and \eqref{unifconvboundary}. Assume in addition that $\sigma\leq\frac{N-(N-2)p}{2}$. Then, there exists $C>0$ such that $\|u\|_{L^\infty(\Omega)}\leq C$ for every solution $u$ to \eqref{tproblem} for all $t\in [0,t_0]$, where $t_0>0$ is given by Proposition~\ref{testimate}.
\end{proposition}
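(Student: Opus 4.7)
The plan is to argue by contradiction and reuse the two blow-up schemes already developed in Propositions~\ref{gidasspruck} and \ref{gidasspruck2}, selecting one or the other according to where the concentration point lies relative to the exceptional set $\omega$ appearing in condition \eqref{unifconvboundary}. Assuming sequences $\{t_n\}\subset[0,t_0]$ and solutions $u_n$ of $(P^{t_n})$ with $M_n=\|u_n\|_{L^\infty(\Omega)}\to+\infty$, I would pick $x_n\in\Omega$ with $u_n(x_n)=M_n$ via Lemma~\ref{continuouslemma}, extract $x_n\to x_0\in\overline{\Omega}$, and form the rescaling $v_n(y)=\eta_n^{2/(p-1)}u_n(x_n+\eta_n y)$ with $\eta_n=M_n^{-(p-1)/2}$, so that $v_n(0)=\|v_n\|_{L^\infty}=1$.

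For $x_0\in\overline{\Omega\setminus\omega}$, condition \eqref{unifconvboundary} yields $u_n g_n(\cdot,u_n)\to\mu(x_0)$ uniformly on any compact set in which the rescaled sequence is eventually defined (using the positive lower bound \eqref{lowerlocalestimate} coming from the comparison argument of \cite{MA}), with $\mu(x_0)<\tfrac{2^*-1-p}{2^*-2}$. The H\"older estimate of Lemma~\ref{holderestimatelemma} (supplied by \eqref{sgsmallcond}, \eqref{boundsf} and Lemma~\ref{summabilitylemma}), the local $H^1$ bound, and the a.e.\ gradient convergence of \cite{BoccMur} go through unchanged. The analysis of Proposition~\ref{gidasspruck} (using the boundary straightening of Lemma~\ref{straighteninglemma} when $x_0\in\partial\Omega$) then produces a nontrivial $v\in H^1_{\mbox{\tiny loc}}(X)\cap C(\overline X)$ solving $-\Delta v+\mu(x_0)|\nabla v|^2/v=\lambda v^p$ in $X\in\{\mathbb{R}^N,\mathbb{R}^N_+\}$ with zero boundary values, contradicting Lemma~\ref{liouvillelemma} (see Remark~\ref{liouvilleremark}).

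For $x_0\in\omega$, the concentration point is interior to $\Omega$, so for every $R>0$ the ball $B_R(0)$ is contained in the domain of $v_n$ for all $n$ large, and the blown-up sets exhaust $\mathbb{R}^N$. Since no limit of $sg$ is available here, I would follow Proposition~\ref{gidasspruck2}: using only $u_ng_n(y,u_n)\leq\sigma$ from \eqref{sgsmallcond}, $f(u_n)\geq u_n^p$ from \eqref{boundsf}, and $t_n,v_n\geq 0$, the rescaled equation yields the pointwise supersolution inequality
\[-\Delta v_n+\sigma\,\frac{|\nabla v_n|^2}{v_n}\,\geq\,\lambda v_n^p\qquad\text{in }B_R(0)\]
for every $R>0$. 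The same local estimates as in the previous step are in force, and passing to the limit (invoking Fatou's lemma only once) together with a diagonal argument produces a nontrivial supersolution $v\in H^1_{\mbox{\tiny loc}}(\mathbb{R}^N)\cap C(\mathbb{R}^N)$ of $-\Delta v+\sigma|\nabla v|^2/v=\lambda v^p$ on $\mathbb{R}^N$. The homogeneity rescaling $w=\lambda^{1/(p-1)}v$ normalizes $\lambda=1$, and Lemma~\ref{liouvillelemmasupersol} together with $\sigma\leq\tfrac{N-(N-2)p}{2}$ delivers the contradiction.

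The main obstacle is the bookkeeping of the dichotomy and the matching of the constraints on $(\sigma,p)$. The key observation is that every point of $\omega$ is interior, so the $\mathbb{R}^N$ version of Lemma~\ref{liouvillelemmasupersol} is the only one ever needed in the ``no limit available'' case; this is precisely why the assumption $\sigma\leq\tfrac{N-(N-2)p}{2}$ suffices here, whereas the half-space constraint $\sigma\leq\tfrac{N+1-(N-1)p}{2}$ required in Proposition~\ref{gidasspruck2} can be dispensed with because any boundary concentration point automatically lies in $\overline{\Omega\setminus\omega}$, where \eqref{unifconvboundary} applies and the more permissive solution-type Liouville result (Lemma~\ref{liouvillelemma}) takes over.
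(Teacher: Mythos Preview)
Your proposal is correct and follows essentially the same strategy as the paper's proof. The only difference is the case split: the paper partitions according to $x_0\in\Omega$ versus $x_0\in\partial\Omega$, treating \emph{every} interior limit point via the supersolution route of Proposition~\ref{gidasspruck2} (and hence Lemma~\ref{liouvillelemmasupersol} with $X=\mathbb{R}^N$), while reserving the solution-type argument of Proposition~\ref{gidasspruck} (and Lemma~\ref{liouvillelemma}) for $x_0\in\partial\Omega$, where \eqref{unifconvboundary} is automatically available. Your finer split $x_0\in\overline{\Omega\setminus\omega}$ versus $x_0\in\omega$ is in the same spirit, but note that for $x_0\in\partial\omega$ the rescaled points $x_n+\eta_n y$ may fall inside $\omega$, so the uniform limit in \eqref{unifconvboundary} is not guaranteed on all of $B_R(0)$; the paper's coarser dichotomy sidesteps this by simply sending any interior $x_0$ (including those on $\partial\omega$) through the supersolution branch.
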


\begin{proof}
We argue similarly as for Proposition~\ref{gidasspruck} and Proposition~\ref{gidasspruck2}, so we provide only the main ideas.

Assume by contradiction that there exist two sequences $\{t_n\}\subset [0,t_0]$ and $\{u_n\}$ such that $u_n$ is a solution to $(P^{t_n})$ for all $n$ and $\|u_n\|_{L^\infty(\Omega)}\to+\infty$ as $n\to+\infty$. Thanks to Lemma~\ref{continuouslemma}, we may also consider a sequence $\{x_n\}\subset\Omega$ satisfying
\[\|u_n\|_{L^\infty(\Omega)}=u_n(x_n)\quad\forall n,\quad x_n\to x_0\in\overline{\Omega}\text{ up to a subsequence}.\]
Suppose that $x_0\in\Omega$. Since $x_0$ might belong to $\omega$ (where we know nothing about the asymptotic behavior of $g$ at infinity), we cannot proceed as in the proof of Proposition~\ref{gidasspruck}. Nevertheless, scaling $u_n$ conveniently and using \eqref{sgsmallcond} and \eqref{boundsf} we may argue as in the proof of Proposition~\ref{gidasspruck2} to find a supersolution $0<v\in H^1_{\mbox{\tiny loc}}(\mathbb{R}^N)\cap  C(\mathbb{R}^N)$ to
\begin{equation*}
\displaystyle-\Delta v+\sigma\frac{|\nabla v|^2}{v}=v^p, \quad y\in \mathbb{R}^N.
\end{equation*}
This is a contradiction with Lemma~\ref{liouvillelemmasupersol}.

On the other hand, if $x_0\in\partial\Omega$, then we may take advantage of  \eqref{unifconvboundary} to obtain, using also \eqref{limitf} and arguing as in Proposition~\ref{gidasspruck}, a solution $v\in H^1_{\mbox{\tiny loc}}(X)\cap  C(\overline{X})$ to \eqref{Xprob} for $\sigma=\mu(y_0)$ and some $y_0\in\overline{\Omega}$, and either $X=\mathbb{R}^N$ or $X=\mathbb{R}^N_+$. This contradicts Lemma~\ref{liouvillelemma} (see also Remark~\ref{liouvilleremark}).
\end{proof}

Next result provides an estimate for the solutions to problem \eqref{lambdaproblem} whose dependence on $\lambda$ is explicit. As a consequence, it is shown that the norm of the solutions to problem \eqref{lambdaproblem}, if they exist, becomes arbitrarily small as $\lambda$ tends to infinity.

\begin{proposition}\label{gidasspruck4}
Let $f:[0,+\infty)\to[0,+\infty)$ be a continuous function and $g:\Omega\times (0,+\infty)\to [0,+\infty)$ be a Carath\'eodory function. For $\delta=0$, $\tau\geq 0$, $\sigma>0$ and $p\in\left(1,\frac{N+1}{N-1}\right)$, assume that $f$ satisfies \eqref{boundsf} and $g$ satisfies \eqref{sgsmallcond}. Assume in addition that $\sigma\leq \frac{N+1-(N-1)p}{2}$. Then, there exists $C>0$ such that 
\begin{equation*}
\lambda^\frac{1}{p-1}\|u\|_{L^\infty(\Omega)}\leq C
\end{equation*}
for every solution $u$ to \eqref{lambdaproblem} for all $\lambda>0$.
\end{proposition}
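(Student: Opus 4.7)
The plan is to exploit the scale invariance of the problem when $\delta=0$. For any solution $u$ to \eqref{lambdaproblem}, a direct calculation shows that $v:=\lambda^{1/(p-1)}u$ solves
\[
\begin{cases}
-\Delta v + \tilde g(x,v)|\nabla v|^2 = \tilde f(v), & x\in\Omega,\\
v>0, & x\in\Omega,\\
v=0, & x\in\partial\Omega,
\end{cases}
\]
with $\tilde f(s)=\lambda^{p/(p-1)}f(\lambda^{-1/(p-1)}s)$ and $\tilde g(x,s)=\lambda^{-1/(p-1)}g(x,\lambda^{-1/(p-1)}s)$. Crucially, when $\delta=0$, condition \eqref{boundsf} transforms into $s^p\leq \tilde f(s)\leq a s^p$ and condition \eqref{sgsmallcond} transforms into $\tau\leq s\tilde g(x,s)\leq\sigma$, with the \emph{same} constants $a,\tau,\sigma$, while $\tilde g\geq 0$ is preserved. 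Thus the desired bound $\lambda^{1/(p-1)}\|u\|_\infty=\|v\|_\infty\leq C$ reduces to proving a uniform $L^\infty$ estimate on the family of rescaled solutions, with $C$ independent of $\lambda$.

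Arguing by contradiction, suppose there exist sequences $\lambda_n>0$ and $u_n$ with $M_n:=\|v_n\|_\infty\to+\infty$, where $v_n=\lambda_n^{1/(p-1)}u_n$. Pick $x_n\in\overline\Omega$ with $v_n(x_n)=M_n$ (using Lemma~\ref{continuouslemma}), set $\eta_n=M_n^{-(p-1)/2}$, and define $V_n(y)=M_n^{-1}v_n(x_n+\eta_n y)$, so that $V_n(0)=\|V_n\|_\infty=1$. A standard computation, using $\eta_n^{2}M_n^{p-1}=1$, yields
\[
-\Delta V_n + G_n(y,V_n)|\nabla V_n|^2 = F_n(V_n)
\]
on the rescaled domain, where $G_n(y,V):=M_n\tilde g_n(x_n+\eta_n y, M_n V)\geq 0$ still satisfies $\tau\leq VG_n(y,V)\leq\sigma$, and $F_n(V):=\eta_n^2 M_n^{-1}\tilde f_n(M_n V)$ satisfies $V^p\leq F_n(V)\leq aV^p$, all uniformly in $n$. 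Since the local a priori machinery of Proposition~\ref{gidasspruck2} (the H\"older estimate from Lemma~\ref{holderestimatelemma}, the local $H^1$ bound, the strict positivity on compact subsets via comparison with $-\Delta z_n=V_n^{p-\sigma}$, and the a.e.\ convergence of gradients) relies only on these uniform bounds, it applies verbatim to $\{V_n\}$. In the boundary case $x_0\in\partial\Omega$, Lemma~\ref{straighteninglemma} flattens $\partial\Omega$ exactly as before, and the dichotomy $d_n/\eta_n\to\infty$ versus $d_n/\eta_n\to\kappa<\infty$ is handled identically.

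Passing to the limit by applying Fatou's lemma to the supersolution-type inequality
\[
-\Delta V_n+\sigma\frac{|\nabla V_n|^2}{V_n}\geq V_n^p,
\]
which follows from $VG_n\leq\sigma$, $G_n\geq 0$ and $F_n\geq V_n^p$, we extract a nontrivial supersolution $V\in H^1_{\mathrm{loc}}(X)\cap C(\overline X)$ of the same inequality on either $X=\mathbb{R}^N$ or $X=\mathbb{R}^N_+$, with $V=0$ on $\partial X$ in the latter case. Since the hypothesis $\sigma\leq\frac{N+1-(N-1)p}{2}$ implies the stronger condition $\sigma\leq\frac{N-(N-2)p}{2}$ needed for the interior case, Lemma~\ref{liouvillelemmasupersol} rules out both alternatives, giving the desired contradiction. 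The main obstacle---and precisely the reason for the assumption $\delta=0$---is that only in this regime do \eqref{boundsf} and \eqref{sgsmallcond} remain invariant under the preliminary rescaling $v=\lambda^{1/(p-1)}u$; for $\delta>0$ one would obtain transformed nonlinearities with $\lambda$-dependent constants, breaking the uniformity required to pass to a Liouville limit.
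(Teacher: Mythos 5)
Your proof is correct and follows essentially the same route as the paper's. The paper also begins by setting $z_n=\lambda_n^{1/(p-1)}u_n$, then blows up via $v_n(y)=\eta_n^{2/(p-1)}z_n(x_n+\eta_n y)$ with $\eta_n=\|z_n\|_{L^\infty(\Omega)}^{-(p-1)/2}$, tracks the combined scaling parameter $L_n$ with $L_n^{p-1}=\lambda_n\eta_n^2$, derives the supersolution inequality $-\div(M_n\nabla v_n)+b_n\nabla v_n+\sigma\frac{M_n\nabla v_n\nabla v_n}{v_n}\geq v_n^p$ from \eqref{boundsf} and \eqref{sgsmallcond}, passes to the limit with a single application of Fatou's lemma, and invokes Lemma~\ref{liouvillelemmasupersol}. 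The only presentational difference is that you package the rescaled coefficients into $G_n,F_n$ and emphasize up front that when $\delta=0$ the conditions \eqref{boundsf} and \eqref{sgsmallcond} are invariant under the preliminary rescaling $v=\lambda^{1/(p-1)}u$ with $\lambda$-independent constants, which is a helpful conceptual remark that the paper leaves implicit; otherwise the two computations are identical, and your closing observation about why $\delta=0$ is essential accurately explains the hypothesis.
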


\begin{proof}
Arguing again as in the proof of Proposition~\ref{gidasspruck}, assume that there exist two sequences $\{\lambda_n\}\subset [0,+\infty)$ and $\{u_n\}$ such that $u_n$ is a solution to $(P_{\lambda_n})$ for all $n$ and $\|z_n\|_{L^\infty(\Omega)}\to+\infty$ as $n\to+\infty$, where $z_n=\lambda_n^\frac{1}{p-1}u_n$. It is easy to see that $z_n$ satisfies
\[-\Delta z_n+\lambda_n^{-\frac{1}{p-1}} g\left(x,\lambda_n^{-\frac{1}{p-1}}z_n\right)|\nabla z_n|^2=\lambda_n^\frac{p}{p-1}f\left(\lambda_n^{-\frac{1}{p-1}}z_n\right),\quad x\in\Omega.\]
Since $z_n\in C(\overline{\Omega})$ by virtue of Lemma~\ref{continuouslemma}, then we may take $\{x_n\}\subset\Omega$ such that $z_n(x_n)=\|z_n\|_{L^\infty(\Omega)}$ for all $n$. Let $x_0\in\overline{\Omega}$ be such that, passing to a subsequence if necessary, $x_n\to x_0\in\partial\Omega$ (the case $x_0\in\Omega$ is analogous, so we omit it). Arguing as in the proof of Proposition~\ref{gidasspruck} (Case~2), we may assume without loss of generality that $v_n\in H^1(\Omega_n)\cap L^\infty(\Omega_n)$, defined by $v_n(y)=\eta_n^\frac{2}{p-1}z_n(x_n+\eta_n y)$ for all $y\in\Omega_n$, satisfies 
\begin{equation}\label{Lnequation}
-\div(M_n(y)\nabla v_n)+b_n(y)\nabla v_n + \frac{v_n}{L_n} g_n\left(y,\frac{v_n}{L_n}\right)\frac{M_n(y)\nabla v_n\nabla v_n}{v_n}= L_n^p f\left(\frac{v_n}{L_n}\right),\quad y\in \Omega_n,
\end{equation}
where $\eta_n=\|z_n\|_{L^\infty(\Omega)}^{-\frac{p-1}{2}}$,  $L_n^{p-1}=\lambda_n\eta_n^2$ and $\Omega_n, \Gamma_n, M_n, b_n, g_n$ are as in the proof of Proposition~\ref{gidasspruck} (Case~2). From \eqref{Lnequation}, and using conditions \eqref{boundsf} and \eqref{sgsmallcond}, one can prove the same local estimates on $\{v_n\}$ as in the proof of Proposition~\ref{gidasspruck}. Moreover, again by \eqref{boundsf} and \eqref{sgsmallcond} we deduce that
\begin{equation*}
-\div(M_n(y)\nabla v_n)+b_n(y)\nabla v_n + \sigma\frac{M_n(y)\nabla v_n\nabla v_n}{v_n}\geq v_n^p,\quad y\in \Omega_n.
\end{equation*}
Now we pass to the limit in the previous inequality by using Fatou's lemma once and obtain a supersolution $v\in H^1_{\mbox{\tiny loc}}(X)\cap C(\overline{X})$ to \eqref{Xprob}, where either $X=\mathbb{R}^N$ or $X=\mathbb{R}^N_+$. This is a contradiction by virtue of Lemma~\ref{liouvillelemmasupersol}.
\end{proof}

In the last result of this section we impose stronger restrictions on $f$ and $g$ to obtain an a priori estimate for the solutions to \eqref{lambdaproblem} similar to that in Proposition~\ref{gidasspruck4}. The advantage is that $p$ can be chosen near $2^*-1$.

\begin{proposition}\label{gidasspruck5}
Let $g:\overline{\Omega}\times (0,+\infty)\to [0,+\infty)$ be a continuous function. For $\delta=0$, $\tau\geq 0$, $\sigma>0$ and $p\in\left(1,2^*-1\right)$, assume that $g$ satisfies \eqref{sgsmallcond}, \eqref{unifconv} and \eqref{sguniflimitzero}.  Assume in addition that there exists $\alpha>0$ such that, for each $x_0\in\overline{\Omega}$ and $L>0$, the function $\psi:[0,+\infty)\to [0,+\infty)$, defined by
\[\psi(s)=\int_0^s e^{-\frac{1}{L}\int_{\alpha L}^t g\left(x_0,\frac{r}{L}\right)dr}dt\quad\forall s\geq 0,\]
satisfies that 
\begin{equation}\label{Hdecreasing}
s\mapsto \frac{\psi'(s)s^p}{\psi(s)^{2^*-1}}\quad\text{ is decreasing for all }s>0.
\end{equation}
Lastly, let us consider the function $f:[0,+\infty)\to [0,+\infty)$ defined by $f(s)=s^p$ for all $s\geq 0$. Then, there exists $C>0$ such that 
\begin{equation*}
\lambda^\frac{1}{p-1}\|u\|_{L^\infty(\Omega)}\leq C
\end{equation*}
for every solution $u$ to \eqref{lambdaproblem} for all $\lambda>0$.
\end{proposition}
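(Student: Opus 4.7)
The plan is to run a blow-up argument in the spirit of Proposition~\ref{gidasspruck} and Proposition~\ref{gidasspruck4}, adapted to the rescaling $z_n=\lambda_n^{1/(p-1)}u_n$ and supplemented with a trichotomy that uses each of the three hypotheses \eqref{unifconv}, \eqref{sguniflimitzero} and \eqref{Hdecreasing} according to the asymptotic regime of the blown-up sequence. Arguing by contradiction, I would assume the existence of $\{\lambda_n\}\subset(0,+\infty)$ and solutions $u_n$ of $(P_{\lambda_n})$ with $\|z_n\|_{L^\infty(\Omega)}\to+\infty$, pick (via Lemma~\ref{continuouslemma}) $x_n\in\Omega$ where $z_n$ attains its maximum, extract $x_n\to x_0\in\overline{\Omega}$, and set $\eta_n=\|z_n\|_{L^\infty(\Omega)}^{-(p-1)/2}$ and $L_n=\|u_n\|_{L^\infty(\Omega)}^{-1}=\lambda_n^{1/(p-1)}\eta_n^{2/(p-1)}$. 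The rescaled functions $v_n(y)=\eta_n^{2/(p-1)}z_n(x_n+\eta_n y)$ then satisfy $\|v_n\|_{L^\infty}=v_n(0)=1$ and
\[
-\Delta v_n+\Bigl[\tfrac{v_n}{L_n}\,g\bigl(x_n+\eta_n y,\tfrac{v_n}{L_n}\bigr)\Bigr]\frac{|\nabla v_n|^2}{v_n}=v_n^p
\]
on their natural domains (if $x_0\in\partial\Omega$ I would first apply Lemma~\ref{straighteninglemma} to flatten the boundary exactly as in Case~2 of the proof of Proposition~\ref{gidasspruck}).

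The compactness part is essentially the one from Proposition~\ref{gidasspruck}: \eqref{sgsmallcond} still gives $0\leq(v_n/L_n)g(\cdot,v_n/L_n)\leq\sigma<1$ regardless of the size of $L_n$, so Lemma~\ref{summabilitylemma} and Lemma~\ref{holderestimatelemma} produce uniform local H\"older estimates, hence $v_n\to v$ locally uniformly with $v\not\equiv 0$. Testing with $v_n\phi^2$ yields weak $H^1_{\mbox{\tiny loc}}$ bounds; the comparison $w_n:=v_n^{1-\sigma}/(1-\sigma)\geq z_n$ with $z_n$ solving $-\Delta z_n=v_n^{p-\sigma}$ on a ball with zero Dirichlet data, combined with the strong maximum principle, delivers a positive local lower bound on $v_n$; and boundedness of $\{\Delta v_n\}$ in $L^1_{\mbox{\tiny loc}}$ lets \cite{BoccMur} upgrade to $\nabla v_n\to\nabla v$ a.e.

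The decisive new step is a trichotomy on $L_n$. Up to a subsequence, either (i) $L_n\to 0$, (ii) $L_n\to L\in(0,+\infty)$, or (iii) $L_n\to+\infty$. In (i), the local lower bound forces $v_n/L_n\to+\infty$ locally uniformly and \eqref{unifconv} gives $(v_n/L_n)g(x_n+\eta_n\cdot,v_n/L_n)\to\mu(x_0)$; in (iii), $v_n/L_n\to 0$ locally uniformly and \eqref{sguniflimitzero} gives the same convergence with $\mu_0(x_0)$; in (ii), $v_n/L_n$ stays in a compact subset of $(0,+\infty)$ and continuity of $g$ yields $(v_n/L_n)g(x_n+\eta_n\cdot,v_n/L_n)\to(v/L)g(x_0,v/L)$. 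In every case, the two-Fatou test-function procedure of Proposition~\ref{gidasspruck} (testing first with nonnegative $\phi$, then with $v\phi/v_n$, which is admissible because $\sigma<1$) upgrades these pointwise convergences to passage to the limit in the equation, producing $v\in H^1_{\mbox{\tiny loc}}(X)\cap C(\overline{X})$ (with $X=\mathbb{R}^N$ or $X=\mathbb{R}^N_+$) solving $-\Delta v+h(v)|\nabla v|^2=v^p$, $v>0$ in $X$, $v=0$ on $\partial X$, where $h(s)$ equals $\mu(x_0)/s$, $L^{-1}g(x_0,s/L)$ or $\mu_0(x_0)/s$ respectively.

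Cases (i) and (iii) are then closed by Remark~\ref{liouvilleremark} together with Lemma~\ref{liouvillelemma}, since $\mu(x_0),\mu_0(x_0)<(2^*-1-p)/(2^*-2)$. The main obstacle will be case (ii), where neither the behavior at infinity nor at zero of $g$ is available: it is precisely here that the hypothesis \eqref{Hdecreasing} is tailor-made to verify the monotonicity assumption of Lemma~\ref{liouvillelemma} for the coefficient $h(s)=L^{-1}g(x_0,s/L)$. Note that replacing the base point $\alpha$ by $\alpha L$ in the definition of $\psi$ only multiplies $\psi$ by a positive constant and thus preserves the decreasing character of $s\mapsto\psi'(s)s^p/\psi(s)^{2^*-1}$. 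Each of the three cases yields a contradiction, from which the desired a priori estimate follows.
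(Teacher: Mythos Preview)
Your proposal is correct and follows essentially the same approach as the paper: reproduce the blow-up setup of Proposition~\ref{gidasspruck4} up to the rescaled equation, borrow the compactness estimates from Proposition~\ref{gidasspruck}, and then split into the trichotomy $L_n\to 0$, $L_n\to L\in(0,+\infty)$, $L_n\to+\infty$, using respectively \eqref{unifconv}, the continuity of $g$ together with \eqref{Hdecreasing}, and \eqref{sguniflimitzero} to pass to the limit and invoke Lemma~\ref{liouvillelemma}. Your observation about the base point $\alpha$ versus $\alpha L$ is a valid clarification of how the hypothesis \eqref{Hdecreasing} matches the form required by Lemma~\ref{liouvillelemma}.
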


\begin{proof}
We may reproduce the proof of Proposition~\ref{gidasspruck4} up to \eqref{Lnequation}, where $\{v_n\}$ satisfies the same estimates as in the proof of Proposition~\ref{gidasspruck}. Now, unlike in the proof of Proposition~\ref{gidasspruck4}, we aim to pass to the limit directly in \eqref{Lnequation}. In order to do so, let us take a not relabeled subsequence so that $L_n\to L$ for some $L\in [0,+\infty]$. Let us assume first that $L=0$. In this case, we can pass to the limit in \eqref{Lnequation} as in the proof of Proposition~\ref{gidasspruck} and we obtain a solution $v\in H^1_{\mbox{\tiny loc}}(X)\cap C(\overline{X})$ to \eqref{Xprob} with $\sigma=\mu(y_0)$ for some $y_0\in\overline{\Omega}$, where either $X=\mathbb{R}^N$ or $X=\mathbb{R}^N_+$. This is a contradiction by virtue of Lemma~\ref{liouvillelemma} (see also Remark~\ref{liouvilleremark}).

Therefore, necessarily $L>0$. Assume now that $L=+\infty$. Using \eqref{sguniflimitzero}, we may pass to the limit again in \eqref{Lnequation} and we find a solution $v\in H^1_{\mbox{\tiny loc}}(X)\cap C(\overline{X})$ to \eqref{Xprob} with $\sigma=\mu_0(y_0)$ for some $y_0\in\overline{\Omega}$, where either $X=\mathbb{R}^N$ or $X=\mathbb{R}^N_+$. One more time, this contradicts Lemma~\ref{liouvillelemma} (see also Remark~\ref{liouvilleremark}).

The only remaining possibility is $0<L<+\infty$. Thanks to the continuity of $g$, we may pass to the limit in \eqref{Lnequation} once more to obtain a solution $v\in H^1_{\mbox{\tiny loc}}(X)\cap C(\overline{X})$ to \eqref{Xhprob}, where $h(s)=\frac{1}{L}g\left(y_0,\frac{s}{L}\right)$ for some $y_0\in\overline{\Omega}$ and either $X=\mathbb{R}^N$ or $X=\mathbb{R}^N_+$. Again, this is a contradiction with Lemma~\ref{liouvillelemma}.
\end{proof}


\section{Proofs of the main results}\label{existence}

We start by proving Theorem~\ref{comparison}. 

\begin{proof}[Proof of Theorem~\ref{comparison}]
Let us first define the function $\tilde{g}:\Omega\times(0,+\infty)\to [0,+\infty)$ by
\[\tilde{g}(x,s)=\frac{\sigma-s^\frac{1}{1-\sigma}g\left(x,s^\frac{1}{1-\sigma}\right)}{(1-\sigma)s},\quad\text{a.e. }x\in\Omega,\,\,\forall s>0.\]
Thanks to \eqref{sgsmallcond2} and \eqref{nondecrasingcond}, it is clear that $\tilde{g}$ is nonincreasing in the $s$ variable. Moreover, 
\[|\tilde{g}(s,x)|\leq\frac{\sigma+s^\frac{1}{1-\sigma} g_0\left(s^\frac{1}{1-\sigma}\right)}{(1-\sigma)s},\quad\text{a.e. }x\in\Omega,\,\,\forall s>0,\]
so it is a bounded function in the $x$ variable.
 
For some $0\leq \phi\in H_0^1(\Omega)\cap L^\infty(\Omega)$ with compact support, let us take $\frac{(1-\sigma)\phi}{u^\sigma}$ as test function in \eqref{compsubsol}. Then, if we denote $\tilde{u}=u^{1-\sigma}$, we deduce that
\[\int_\Omega\nabla\tilde{u}\nabla\phi\leq\int_\Omega \tilde{g}(x,\tilde{u})|\nabla \tilde{u}|^2\phi+\int_\Omega\frac{(1-\sigma)h(x)}{\tilde{u}^\frac{\sigma}{1-\sigma}}\phi.\]
Arguing similarly, $\tilde{v}=v^{1-\sigma}$ satisfies
\[\int_\Omega\nabla\tilde{v}\nabla\phi\geq\int_\Omega \tilde{g}(x,\tilde{v})|\nabla \tilde{v}|^2\phi+\int_\Omega\frac{(1-\sigma)h(x)}{\tilde{v}^\frac{\sigma}{1-\sigma}}\phi.\]
Moreover, $\tilde{u},\tilde{v}\in C(\Omega)\cap W^{1,N}_{\mbox{\tiny loc}}(\Omega)$ and they satisfy $\limsup_{x\to x_0}(\tilde{u}(x)-\tilde{v}(x))\leq 0$ for all $x_0\in\partial\Omega$. 

At this point one can reproduce the proof of \cite[Theorem 2.1]{L} without relevant changes and conclude that $\tilde{u}\leq\tilde{v}$ in $\Omega$. Equivalently, $u\leq v$ in $\Omega$.  
\end{proof}

Next we prove Theorem~\ref{nonexistence}.

\begin{proof}[Proof of Theorem~\ref{nonexistence}]
Let $v$ be a solution to \eqref{hproblem}. Recall that the definition of solution implies that there exists $\gamma>0$ such that $v^\gamma\in H_0^1(\Omega)$. It is easy to see that $u=v^\gamma$ satisfies the equation
\begin{equation}\label{hgammaproblem}
-\Delta u+g_\gamma(x,u)|\nabla u|^2=\gamma v^{\gamma-1}h(x),\quad x\in\Omega,
\end{equation}
where 
\[g_\gamma(x,s)=\frac{s^\frac{1}{\gamma}g\left(x,s^\frac{1}{\gamma}\right)+\gamma-1}{\gamma s},\quad\text{a.e. }x\in\Omega,\,\,\forall s>0.\]
It follows from \eqref{gtoolarge} that 
\begin{equation}\label{gammasgtoolarge}
s g_\gamma(x,s)\geq\frac{\tau+\gamma-1}{\gamma}\quad\text{a.e. }x\in\Omega\setminus\omega,\,\,\forall s\in (0,s_0^\gamma),
\end{equation}
where clearly $\frac{\tau+\gamma-1}{\gamma}>1$. From now we will denote $\rho=\frac{\tau+\gamma-1}{\gamma}$ and $s_1=s_0^\gamma$.

Recall that there exists $c>0$ such that $v\geq c^\frac{1}{\gamma}$ in $\omega$. For every $\varepsilon\in (0,\min\{s_1,c\})$, let us define the function $\varphi_\varepsilon:[0,+\infty)\to[0,+\infty)$ by
\[\varphi_\varepsilon(s)=
\begin{cases}
\displaystyle \frac{1}{s}+\frac{s^{\rho-1}-\varepsilon^{\rho-1}}{(\rho-1)s^\rho}, &s\geq\varepsilon,
\\
\displaystyle\frac{s}{\varepsilon^2}, &0\leq s<\varepsilon.
\end{cases}
\]
Clearly, $\varphi_\varepsilon(u)\in H_0^1(\Omega)\cap L^\infty(\Omega)$. Hence, taking $\varphi_\varepsilon(u)$ as test function in the weak formulation of \eqref{hgammaproblem} (this can be done thanks to Remark~\ref{testfunctionsremark} because $u\in H_0^1(\Omega)$) we obtain
\begin{equation}\label{ecu0}
\int_\Omega\varphi_\varepsilon'(u)|\nabla u|^2+\int_\Omega g_\gamma(x,u)|\nabla u|^2\varphi_\varepsilon(u)=\gamma\int_\Omega v^{\gamma-1}h(x)\varphi_\varepsilon(u).
\end{equation}
Observe that, from \eqref{gammasgtoolarge}, it follows that
\begin{align}\label{ecu1}
\nonumber\int_\Omega &g_\gamma(x,u)|\nabla u|^2 \varphi_\varepsilon(u) =\int_{\omega\cup\{u\geq s_1\}} g_\gamma(x,u)|\nabla u|^2 \varphi_\varepsilon(u)+\int_{(\Omega\setminus\omega)\cap\{u<s_1\}} g_\gamma(x,u)|\nabla u|^2 \varphi_\varepsilon(u)
\\
&\geq-C+ \int_{(\Omega\setminus\omega)\cap \{\varepsilon<u<s_1\}} \frac{\rho|\nabla u|^2}{u}\left(\frac{1}{u}+\frac{u^{\rho-1}-\varepsilon^{\rho-1}}{(\rho-1)u^\rho}\right).
\end{align}
On the other hand,
\begin{equation}\label{ecu2}
\int_\Omega\varphi_\varepsilon'(u)|\nabla u|^2=\int_{\{u>\varepsilon\}}\frac{\rho|\nabla u|^2}{u^2}-\int_{\{u>\varepsilon\}}\frac{\rho|\nabla u|^2}{u}\left(\frac{1}{u}+\frac{u^{\rho-1}-\varepsilon^{\rho-1}}{(\rho-1)u^\rho}\right)+\int_{\{u\leq\varepsilon\}}\frac{|\nabla u|^2}{\varepsilon^2}.
\end{equation}
We will now absorb the negative term in \eqref{ecu2} with the last term in \eqref{ecu1}. Indeed,
\begin{align*}
\int_{(\Omega\setminus\omega)\cap \{\varepsilon<u<s_1\}} &\frac{\rho|\nabla u|^2}{u}\left(\frac{1}{u}+\frac{u^{\rho-1}-\varepsilon^{\rho-1}}{(\rho-1)u^\rho}\right)-\int_{\{u>\varepsilon\}}\frac{\rho|\nabla u|^2}{u}\left(\frac{1}{u}+\frac{u^{\rho-1}-\varepsilon^{\rho-1}}{(\rho-1)u^\rho}\right)
\\
&=-\int_{[\{\varepsilon<u<s_1\}\cap\omega]\cup\{u\geq s_1\}}\frac{\rho|\nabla u|^2}{u}\left(\frac{1}{u}+\frac{u^{\rho-1}-\varepsilon^{\rho-1}}{(\rho-1)u^\rho}\right)
\\
&\geq -\int_{\{u\geq \min\{s_1,c\}\}}\frac{\rho|\nabla u|^2}{u}\left(\frac{1}{u}+\frac{u^{\rho-1}-\varepsilon^{\rho-1}}{(\rho-1)u^\rho}\right)\geq -C.
\end{align*}
In conclusion, from \eqref{ecu0}, \eqref{ecu2} and from the previous discussion we deduce that
\[\int_{\{u>\varepsilon\}}\frac{\rho|\nabla u|^2}{u^2}\leq \gamma\int_\Omega v^{\gamma-1}h(x)\varphi_\varepsilon(u)+C.\]
Since $\varphi_\varepsilon(s)\leq \frac{\rho}{(\rho-1)s}$ for all $s>0$, we finally deduce that
\[\int_{\{u>\varepsilon\}}\frac{\rho|\nabla u|^2}{u^2}\leq C\left(\int_\Omega\frac{v^{\gamma-1}|h(x)|}{u}+1\right)=C\left(\int_\Omega\frac{|h(x)|}{v}+1\right).\]
Therefore, we let $\varepsilon$ tend to zero and by virtue of Fatou's lemma we obtain that
\[\int_\Omega\frac{|\nabla u|^2}{u^2}\leq C\left(\int_\Omega\frac{|h(x)|}{v}+1\right).\]
Now, in \cite{ZWQ} it is proven that $\int_\Omega\frac{|\nabla u|^2}{u^2}=+\infty$. Therefore, $\int_\Omega\frac{|h(x)|}{v}=+\infty$ and the proof is finished.
\end{proof}

We are ready now to prove Theorem~\ref{fxmodelthm}.

\begin{proof}[Proof of Theorem~\ref{fxmodelthm}]
The existence of a solution $u\in H_0^1(\Omega)$ (not necessarily bounded) is a consequence of \cite[Theorem 3.1]{MA}. Moreover, since $q>\frac{N}{2}$, the well-known Stampacchia's lemma (see \cite{Stampacchia}) implies that $u\in L^\infty(\Omega)$. 

On the other hand, Lemma~\ref{continuouslemma} leads to $u\in C(\overline{\Omega})$. Furthermore, $u\in W^{1,N}_{\mbox{\tiny loc}}(\Omega)$ by virtue of Lemma~\ref{reglemma} and Remark \ref{strongsolremark} in the Appendix below. Therefore, the uniqueness of solution follows directly from Theorem~\ref{comparison}. 

Finally, it is clear that, if $h\gneq 0$ has compact support, then $\int_\Omega\frac{h(x)}{u}<+\infty$ for every solution $u$ to \eqref{modelfxprob}. Thus, Theorem~\ref{nonexistence} implies the nonexistence part of the theorem.
\end{proof}

Now we prove Theorem~\ref{mainthm1}. Since the proofs of Theorem~\ref{mainthm2} and Theorem~\ref{mainthm3} are analogous, we will only make some comments below.

\begin{proof}[Proof of Theorem~\ref{mainthm1}]
We divide the proof into several steps.

\noindent{\bf Step 1) An auxiliary problem}

In addition to the hypotheses of Theorem~\ref{mainthm1}, let us further assume for now that $p<2^*-1$ and $g\gneq 0$. At the end of the proof of the theorem we will make the extension to general $p,g$. 

For every $0\lneq v\in C(\overline{\Omega})$, $t\geq 0$, $\lambda>0$, we consider the following problem:
\begin{equation}\label{auxprob}
\begin{cases}
\displaystyle-\Delta u+(v+\delta)g(x,v)\frac{|\nabla u|^2}{u+\delta}=\lambda f(v)+tv^\sigma,\quad & x\in\Omega,
\\
u>0,\quad & x\in\Omega,
\\
u=0,\quad & x\in\partial\Omega,
\end{cases}
\tag{$Q^t$}
\end{equation}
where $\sigma$ is the same real number that appears in condition \eqref{sgsmallcond}; note that $\sigma\in(0,1)$ since $g\gneq 0$. It is clear that, if $u$ is a solution to \eqref{auxprob} with $t=0$ and $v=u$, then it is actually a solution to \eqref{lambdaproblem}. In this step we will show that, if $v\gneq 0$, there exists a unique finite energy solution to \eqref{auxprob}. 

Observe that, if $0\lneq v\in C(\overline{\Omega})$, then $0\lneq\lambda f(v)+t v^\sigma\in C(\overline{\Omega})$ too. Therefore, if $\delta=0$, then Theorem~\ref{fxmodelthm} implies directly that there exists a unique finite energy solution to \eqref{auxprob}, that we will denote as $u_0$. On the other hand, let us assume that $\delta>0$. Then, $u_0$ is clearly a subsolution to \eqref{auxprob}. Let us consider the unique solution $w\in H_0^1(\Omega)\cap C(\overline{\Omega})$ to the linear problem
\[
\begin{cases}
-\Delta w=\lambda f(v)+tv^\sigma, &x\in\Omega,
\\
w>0, &x\in\Omega,
\\
w=0, &x\in\partial\Omega.
\end{cases}
\]
Then, $w$ is a supersolution to \eqref{auxprob}. Furthermore, $u_0\leq w$ in $\Omega$ by comparison, so \cite[Th\'eor\`eme~{3.1}]{BMP3} implies that there exists a finite energy solution $u_\delta$ to \eqref{auxprob} such that $0<u_0\leq u_\delta\leq w$ in $\Omega$. Moreover, every solution to \eqref{auxprob} belongs to $C(\overline{\Omega})\cap W^{1,N}_{\mbox{\tiny loc}}(\Omega)$ by virtue of Lemma~\ref{continuouslemma}, Lemma~\ref{reglemma} and Remark~\ref{strongsolremark}. Also, the function $(x,s)\mapsto \frac{(v(x)+\delta)g(x,v(x))}{s+\delta}$ satisfies \eqref{sgsmallcond2} and \eqref{nondecrasingcond}. Therefore, Theorem~\ref{comparison} implies that $u_\delta$ is the unique solution to \eqref{auxprob}.

\noindent{\bf Step 2) Existence of a finite energy solution to \eqref{lambdaproblem} if $p<2^*-1$ and $g\gneq 0$.}

Let $X=\{w\in C(\overline{\Omega}):w(x)\geq 0\,\,\forall x\in\overline{\Omega}\}$. We have shown in the previous step that the operator $K: X\times [0,+\infty)\to X$ given by 
\[
\begin{cases}
K(v,t)=u, \quad\forall v\in X\setminus\{0\},\,\, t\geq 0\text{, where $u$ is the unique solution to \eqref{auxprob}},
\\
K(0,t)=0,\quad\forall t\geq 0. 
\end{cases}
\]
is well-defined. We aim to prove that there exists $0\lneq u\in X$ such that $K(u,0)=u$, which is equivalent to finding a solution to \eqref{lambdaproblem}. 

We first prove that $K$ is continuous. Indeed, let $\{v_n\}\subset X$ and $\{t_n\}\subset [0,+\infty)$ be such that $v_n\to v$ in $C(\overline{\Omega})$ for some $v\in X$ and $t_n\to t$ for some $t\geq 0$. Let us denote $u_n=K(v_n,t_n)$ and $h_n=\lambda f(v_n)+t_nv_n^\sigma$. Taking into account that $g\geq 0$, it is clear that
\begin{equation}\label{hnineq}
-\Delta u_n\leq h_n,\quad x\in \Omega,\,\,\forall n.
\end{equation}
We know that there exists $C>0$ such that $h_n\leq C$ for all $n$. Then, \eqref{hnineq} leads to $-\Delta u_n\leq C$ in $\Omega$. From this inequality, it is straightforward to prove that $\{u_n\}$ is bounded in $H_0^1(\Omega)$. Hence, up to a subsequence, $u_n\rightharpoonup u$ weakly in $H_0^1(\Omega)$ and $u_n\to u$ strongly in $L^m(\Omega)$ for some $0\leq u\in H_0^1(\Omega)$ and for all $m\in[1,2^*)$. Furthermore, the inequality \eqref{hnineq} yields the following estimate by virtue of the well-known Stampacchia's lemma (see \cite{Stampacchia}):
\begin{equation}\label{stampacchiaestimate}
\|u_n\|_{L^\infty(\Omega)}\leq C\|h_n\|_{L^\infty(\Omega)},\quad\forall n.
\end{equation}
In particular, if $v\equiv 0$, \eqref{stampacchiaestimate} yields directly that $u_n\to 0$ strongly in $C(\overline{\Omega})$. That is to say, $K$ is continuous in $\{0\}\times [0,+\infty)$.


Let us assume on the contrary that $v\gneq 0$. In this case, we will prove following \cite{MA} that $\{u_n\}$ is locally bounded away from zero. Indeed, first of all observe that, using that $h_n\leq C$, we deduce from \eqref{stampacchiaestimate} that $\{u_n\}$ is bounded in $L^\infty(\Omega)$. Hence, using \eqref{boundsf}, \eqref{sgsmallcond} and the $L^\infty$ estimate on $u_n$, one can easily check that the function $w_n=u_n^{1-\sigma}$ satisfies
\[-\Delta w_n\geq \frac{(1-\sigma)\lambda v_n^p}{u_n^\sigma}\geq C v_n^p,\quad x\in\Omega,\]
for some constant $C>0$. On the other hand, let $z_n\in H_0^1(\Omega)\cap C(\overline{\Omega})$ be the unique solution to the linear problem
\[
\begin{cases}
-\Delta z_n= Cv_n^p, &x\in\Omega,
\\
z_n=0, &x\in\partial\Omega.
\end{cases}
\]
By comparison, $w_n\geq z_n$ for all $n$. Moreover, it is clear that $\{z_n\}$ is bounded in $H_0^1(\Omega)$ and, following \cite{MA} (which in turn is based on \cite{LU}), we also have that $\{z_n\}$ is bounded in $C^{0,\alpha}(\overline{\Omega})$ for some $\alpha\in (0,1)$. Hence, passing to a subsequence, $z_n$ converges weakly in $H_0^1(\Omega)$ and strongly in $C(\overline{\Omega})$ to the unique solution $z\in H_0^1(\Omega)\cap C(\overline{\Omega})$ to
\[
\begin{cases}
-\Delta z= Cv^p, &x\in\Omega,
\\
z=0, &x\in\partial\Omega.
\end{cases}
\]
Since $v\gneq 0$, the strong maximum principle implies that, for all $\omega\subset\subset\Omega$, there exists $c_\omega>0$ such that $z\geq 2 c_\omega$ in $\omega$. Furthermore, since $z_n\to z$ uniformly in $\overline{\Omega}$, it follows that $z_n\geq z-c_\omega$ in $\overline{\Omega}$ for all $n$ large enough. In sum,
\[u_n^{1-\sigma}=w_n\geq z_n\geq c_\omega,\quad x\in\omega\]
for all $n$ large enough, as we claimed.

It is clear now that $\{\Delta u_n\}$ is bounded in $L^1_{\mbox{\tiny loc}}(\Omega)$. Hence, from \cite{BoccMur} we deduce that $\nabla u_n\to\nabla u$ a.e. in $\Omega$. We may now pass to the limit using Fatou's lemma twice, similarly as in the proof of Proposition~\ref{gidasspruck} (Case 1, Step 1.3), so that $u$ is the unique solution to \eqref{auxprob} i.e. $u=K(v,t)$. 

It remains to prove that $u_n\to u$ strongly in $C(\overline{\Omega})$. Indeed, since $\{u_n\}$ is locally bounded away from zero, Lemma~\ref{reglemma}  implies that, for every $\omega\subset\subset\Omega$, one may take a subsequence such that $u_n\to u$ strongly in $C(\overline{\omega})$. Actually, the uniqueness of $u$ assures that the original sequence $\{u_n\}$, and not merely a subsequence, converges itself to $u$ strongly in $C(\overline{\omega})$. On the other hand, let $\zeta_n\in H_0^1(\Omega)\cap C(\overline{\Omega})$ be the unique solution to
\[
\begin{cases}
-\Delta \zeta_n= h_n, &x\in\Omega,
\\
\zeta_n=0, &x\in\partial\Omega.
\end{cases}
\]
By comparison, $u_n\leq \zeta_n$ in $\Omega$ for all $n$. Moreover, $\zeta_n\to\zeta$ strongly in $C(\overline{\Omega})$, where $\zeta\in H_0^1(\Omega)\cap C(\overline{\Omega})$ is the unique solution to 
\[
\begin{cases}
-\Delta \zeta= \lambda f(v)+tv^\sigma, &x\in\Omega,
\\
\zeta=0, &x\in\partial\Omega.
\end{cases}
\]
In particular, for fixed $\varepsilon>0$, we deduce that
\[
|u_n(x)-u(x)|\leq\zeta_n(x)+u(x)< \zeta(x)+u(x)+\frac{\varepsilon}{2}\quad\forall x\in\overline{\Omega},\,\,\forall n\text{ large enough.}
\]
Therefore, since $u,\zeta\in C(\overline{\Omega})$ with $u=\zeta=0$ on $\partial\Omega$, there exits $\omega\subset\subset\Omega$ such that
\[|u_n(x)-u(x)|<\varepsilon\quad\forall x\in\overline{\Omega}\setminus\omega,\,\,\forall n\text{ large enough.}\]
Since $u_n\to u$ strongly in $C(\overline{\omega})$, we conclude that
\[|u_n(x)-u(x)|<\varepsilon\quad\forall x\in\overline{\Omega},\,\,\forall n\text{ large enough.}\]
In other words, $u_n\to u$ strongly in $C(\overline{\Omega})$, so $K$ is continuous.

We prove now that $K$ is compact, i.e. it maps bounded sets to relatively compact sets. Indeed, let $\{v_n\}\subset X$ and $\{t_n\}\subset [0,+\infty)$ be bounded sequences. Taking subsequences, $v_n\to v$ in the weak-$\star$ topology of $L^\infty(\Omega)$ for some $v\in L^\infty(\Omega)$, while $t_n\to t$ for some $t\geq 0$. This is enough to pass to the limit in the equations as above. We conclude that, up to subsequences, $K(v_n,t_n)\to K(v,t)$ strongly in $C(\overline{\Omega})$. This proves that $K$ is compact. 

We will prove next that there exist $0<r<R$ and $t_1\geq 0$ such that
\begin{enumerate}
\item \label{rsphere}$u\not= s K(u,0)\,\,\forall s\in[0,1],\,\,\forall u\in X$ with $\|u\|_{L^\infty(\Omega)}=r$,
\
\item \label{Rsphere}$u\not= K(u,t)\,\,\forall t\geq 0,\,\,\forall u\in X$ with $\|u\|_{L^\infty(\Omega)}=R$,
\
\item \label{Rball}$u\not= K(u,t)\,\,\forall t\geq t_1,\,\,\forall u\in X$ with $\|u\|_{L^\infty(\Omega)}\leq R$.
\end{enumerate}

In order to prove item \eqref{rsphere}, let us assume by contradiction that, for all $r>0$, there exist $s\in [0,1]$ and $u\in X$ with $\|u\|_{L^\infty(\Omega)}=r$ such that $u= s K(u,0)$. In particular, $u>0$ in $\Omega$ and
\begin{equation*}
\begin{cases}
-\Delta u\leq s\lambda f(u),  &x\in\Omega,
\\
u=0,\ &x\in\partial\Omega.
\end{cases}
\end{equation*}
Since \eqref{fzero} holds, we can choose $r>0$ such that $\lambda f(t)\leq \frac{\lambda_1}{2}t$ for all $t\in [0,r]$, where $\lambda_1$ stands for the principal eigenvalue
of $-\Delta$ in $\Omega$ with zero Dirichlet boundary conditions. Furthermore, $u\leq r$ in $\Omega$, so we have that
\begin{equation*}
\begin{cases}
\displaystyle-\Delta u\leq\frac{\lambda_1}{2}u, &x\in\Omega,
\\
u>0, &x\in\Omega,
\\
u=0, &x\in\partial\Omega.
\end{cases}
\end{equation*}
This is a clear contradiction with the definition of $\lambda_1$.

On the other hand, if we take $R>C$, where $\|u\|_{L^\infty(\Omega)}\leq C$ (see Proposition~\ref{gidasspruck}) then it is clear that \eqref{Rsphere} holds. Moreover, if we take $t_1>t_0$, where $t_0>0$ is given by Proposition~\ref{testimate}, then \eqref{Rball} also holds.

In conclusion, \cite[Proposition~{2.1} and Remark~{2.1}]{deFigLiNuss} can be applied and in consequence we obtain a positive fixed point of $K_0$, i.e. a finite energy solution to \eqref{lambdaproblem}.

\noindent{\bf Step 3) Extension to the general case} 

From this point we assume only the hypotheses of Theorem~\ref{mainthm1}, i.e. $g$ need not be non-negative and $p$ might be greater than or equal to $2^*-1$. 

Fix $\lambda>0$. For some $\gamma>1$ that will be chosen later, let us consider the following problem:

\begin{equation}\label{equivalentproblem}
\begin{cases}
\dys-\Delta v+ g_\gamma(x,v)|\nabla v|^2= \lambda f_\gamma(v), &x\in \Omega,
\\
v> 0, &x\in \Omega,
\\
v=0, &x\in\partial \Omega,
\end{cases}
\end{equation}
where 
\begin{align*}
f_\gamma(s)&=b (s+\delta^\gamma)^\frac{\gamma-1}{\gamma}f\left((s+\delta^\gamma)^\frac{1}{\gamma}-\delta\right), \quad\forall s\geq 0,
\\
g_\gamma(x,s)&=\frac{(s+\delta^\gamma)^\frac{1}{\gamma} g\left(x,(s+\delta^\gamma)^\frac{1}{\gamma}-\delta\right)+\gamma-1}{\gamma (s+\delta^\gamma)},\quad \text{a.e. }x\in \Omega,\,\,\forall s>0,
\end{align*}
and $b>0$ is a constant, which depends only on $p,\delta,\gamma$, that will be chosen below too. It is easy to see that, if $v$ is a solution to \eqref{equivalentproblem}, then $u=(v+\delta^\gamma)^\frac{1}{\gamma}-\delta$ is a solution to \eqref{lambdaproblem}. Moreover, if $v\in H_0^1(\Omega)$ and $\delta>0$, then $u\in H_0^1(\Omega)$ too, so it has finite energy. We will now choose $\gamma>1$ so that problem \eqref{equivalentproblem} admits a finite energy solution.

Indeed, let us first denote $p_\gamma=\frac{\gamma-1+p}{\gamma}$. Obviously, $p_\gamma>1$. It is easy to see that the function 
\[s\mapsto \frac{s^{p_\gamma}}{\left(s+\delta^\gamma\right)^\frac{\gamma-1}{\gamma}\left[\left(s+\delta^\gamma\right)^\frac{1}{\gamma}-\delta\right]^p}\]
is bounded in $[0,+\infty)$. Then, using that $f$ satisfies \eqref{boundsf}, $b>0$ can be chosen (depending only on $p,\delta,\gamma$) such that
\[f_\gamma(s)\geq s^{p_\gamma}\quad\forall s\geq 0.\]
Moreover, since $f$ satisfies \eqref{boundsf}, it is clear that 
\[f_\gamma(s)\leq ab\gamma (s+\delta^\gamma)^{p_\gamma}\quad\forall s\geq 0.\]
We have proved that $f_\gamma$ satisfies \eqref{boundsf}. Taking into account that $f$ satisfies \eqref{limitf} and \eqref{fzero}, it is straightforward to check that also $f_\gamma$ satisfies \eqref{limitf} and \eqref{fzero} respectively.

On the other hand, since $g$ satisfies \eqref{unifconv}, then
\[\lim_{s\to+\infty}\left\|sg_\gamma(\cdot,s)-\frac{\mu+\gamma-1}{\gamma}\right\|_{L^\infty(\Omega)}=0.\]
Using that $\max_{x\in\overline{\Omega}}\mu(x)<\frac{2^*-1-p}{2^*-2}$, it is easy to see that 
\[\max_{x\in\overline{\Omega}}\frac{\mu(x)+\gamma-1}{\gamma}<\frac{2^*-1-p_\gamma}{2^*-2}.\]
Thus, $g_\gamma$ satisfies \eqref{unifconv}. It is also immediate to check that $g_\gamma$ satisfies \eqref{sgzerolimit} using that $g$ does too. Notice finally that, since $g$ satisfies \eqref{sgsmallcond}, then
\[\frac{\tau+\gamma-1}{\gamma}\leq \left(s+\delta^\gamma\right)g_\gamma(s,x)\leq \frac{\sigma+\gamma-1}{\gamma}\quad\text{a.e. }x\in\Omega,\,\,\forall s>0.\]
It is clear that 
\[2\frac{\sigma+\gamma-1}{\gamma}-1<\frac{\tau+\gamma-1}{\gamma}\leq \frac{\sigma+\gamma-1}{\gamma}<1.\]
Therefore, $g_\gamma$ satisfies \eqref{sgsmallcond}.

If we now choose $\gamma>1$ large enough so that $\frac{\tau+\gamma-1}{\gamma}>0$ and $p_\gamma<2^*-1$, then we may apply the previous step in order to obtain a finite energy solution to \eqref{equivalentproblem}. The proof is now finished.
\end{proof}

\begin{proof}[Proofs of Theorem~\ref{mainthm2} and Theorem~\ref{mainthm3}]
The proofs are analogous to that for Theorem~\ref{mainthm1}. The only essential difference lies in how the estimates on the solutions are obtained. In fact, instead of Proposition~\ref{gidasspruck}, we have to apply Proposition~\ref{gidasspruck3} for Theorem~\ref{mainthm2} and both Proposition~\ref{gidasspruck2} and Proposition~\ref{gidasspruck4} for Theorem~\ref{mainthm3}.
\end{proof}

Here we carry out the proof of Theorem~\ref{mainthm4}.

\begin{proof}[Proof of Theorem~\ref{mainthm4}]
Let us define the Carath\'eodory function $\bar{g}:\Omega\times (0,+\infty)\to\mathbb{R}$ by
\[\bar{g}(x,s)=
\begin{cases}
g(x,s)\quad &\text{if }x\in\Omega,\,\,s\in (0,s_0),
\\
\displaystyle\frac{s_0 g(x,s_0)}{s}\quad &\text{if }x\in\Omega,\,\,s\geq s_0.
\end{cases}
\]
Let us also consider the continuous function $\bar{f}:[0,+\infty)\to[0,+\infty)$ defined by
\[\bar{f}(s)=
\begin{cases}
f(s)\quad &\text{if }s\in (0,s_0),
\\
\displaystyle \frac{f(s_0)}{s_0^p}s^p&\text{if }s\geq s_0.
\end{cases}
\]
It is clear that $\bar{f}$ and $\bar{g}$ satisfy the hypotheses of Theorem~\ref{mainthm3}. Thus, if we write $(\bar{P}_\lambda)$ for denoting  problem \eqref{lambdaproblem} with $\bar{g}$ and $\bar{f}$ instead of $g$ and $f$, then Theorem~\ref{mainthm3} implies that there exists a solution $u_\lambda$ to $(\bar{P}_\lambda)$ for all $\lambda>0$ that satisfies
\begin{equation*}
\lambda^\frac{1}{p-1}\|u_\lambda\|_{L^\infty(\Omega)}\leq C\quad\forall\lambda>0.
\end{equation*}
In particular, there exists $\lambda_0>0$ such that $\|u_\lambda\|_{L^\infty(\Omega)}<s_0$ for every $\lambda\geq\lambda_0$. Hence, $u_\lambda$ is, in fact, a solution to \eqref{lambdaproblem} for every $\lambda\geq\lambda_0$. The proof is finished.
\end{proof}

It is left to prove Theorem~\ref{mainthm5}.

\begin{proof}[Proof of Theorem~\ref{mainthm5}]
Let $\sigma\in\left(0,\min\left\{\frac{1}{2},\frac{2^*-1-p}{2^*-2}\right\}\right)$. By virtue of Remark~\ref{muzeroremark}, we may take $\delta\in (0,1)$ such that
\[sg(x,s)\leq\sigma\quad\forall (x,s)\in\overline{\Omega}\times (0,\delta].\]
Notice that $\delta$ can be chosen as small as necessary. In fact, it will be taken small enough several times throughout the proof, depending its size only on $N,p,G$.

Let us define the function $\bar{g}:\overline{\Omega}\times (0,+\infty)\to [0,+\infty)$ by
\[\bar{g}(x,s)=
\begin{cases}
g(x,s)\quad &\text{if }(x,s)\in\overline{\Omega}\times (0,\delta],
\\
\displaystyle\frac{\delta g(x,\delta)}{s}\quad &\text{if }(x,s)\in\overline{\Omega}\times (\delta,+\infty).
\end{cases}
\]

It is clear that $\bar{g}$ satisfies \eqref{sgsmallcond}, \eqref{unifconv} and \eqref{sgzerolimit}. Thus, if we write $(\bar{P}_\lambda)$ for denoting problem \eqref{lambdaproblem} with $\bar{g}$ instead of $g$, then Theorem~\ref{mainthm1} implies that there exists a solution to $(\bar{P}_\lambda)$ for all $\lambda>0$. We aim now to prove that the hypotheses of Proposition~\ref{gidasspruck5} hold true, so that there exists $C>0$ such that 
\begin{equation}\label{lambdaestimate}
\lambda^\frac{1}{p-1}\|u\|_{L^\infty(\Omega)}\leq C\quad\forall u \text{ solution to }(\bar{P}_\lambda)\text{ with }\lambda>0.
\end{equation}
Observe that, if this is true, in particular there exists $\lambda_0>0$ such that $\|u\|_{L^\infty(\Omega)}\leq \delta$ for every solution $u$ to $(\bar{P}_\lambda)$ with $\lambda\geq\lambda_0$. Hence, the solutions to $(\bar{P}_\lambda)$ with $\lambda\geq\lambda_0$ are, in fact, solution to \eqref{lambdaproblem}. Thus, the proof will finish as soon as we prove \eqref{lambdaestimate}.

We will now prove that \eqref{Hdecreasing} in Proposition~\ref{gidasspruck5} holds. Indeed, for any $L>0$, $x_0\in\overline{\Omega}$, let us consider the function $\psi:[0,+\infty)\to[0,+\infty)$ given by
\[\psi(s)=\int_0^s e^{-\frac{1}{L}\int_{\delta L}^t \bar{g}\left(x_0,\frac{r}{L}\right)dr}dt\quad\forall s\geq 0.\]
It is straightforward to check that
\[\psi(s)=L\int_0^{\frac{s}{L}} e^{-\int_{\delta}^t \bar{g}\left(x_0,r\right)dr}dt\quad\forall s\geq 0.\]
We will prove next that $s\mapsto \frac{\psi'(s)s^p}{\psi(s)^{2^*-1}}$ is decreasing for all $s>0$, or equivalently, that
\begin{equation}\label{increasingineq}
(p-s\bar{g}(x_0,s))\frac{\psi(sL)}{L}< (2^*-1)s\psi'(sL)\quad\forall s>0.
\end{equation}
Notice that neither $\frac{\psi(sL)}{L}$ nor $\psi'(sL)$ depends on $L$.

In order to prove \eqref{increasingineq}, we distinguish two possibilities: either $0<s\leq\delta$ or $s>\delta$. Assume first that $0<s\leq\delta$. Then, 
\begin{align*}
&(p-s\bar{g}(x_0,s))\frac{\psi(sL)}{L}-(2^*-1)s\psi'(sL)
\\
=&(p-sg(x_0,s))\int_0^s e^{-\int_\delta^t g(x_0,r)dr}dt-(2^*-1)se^{-\int_\delta^s g(x_0,r)dr}.
\end{align*}
It follows from the mean value theorem that there exists $\theta\in (0,1)$ such that 
\begin{align*}
&(p-sg(x_0,s))\int_0^s e^{-\int_\delta^t g(x_0,r)dr}dt-(2^*-1)se^{-\int_\delta^s g(x_0,r)dr}
\\
=&s e^{\int_s^\delta g(x_0,r)dr}\left[ (p-sg(x_0,s)) e^{\int_{\theta s}^s g(x_0,r)dr}-(2^*-1)\right].
\end{align*}
Thanks to \eqref{integrablecond} we derive
\begin{equation*}
(p-sg(x_0,s)) e^{\int_{\theta s}^s g(x_0,r)dr}-(2^*-1)\leq p e^{\int_0^\delta G(r)dr}-(2^*-1).
\end{equation*}
Using that $\lim_{\delta\to 0}\int_0^\delta G(r)dr=0$ and $p<2^*-1$, we are allowed to choose $\delta$ small enough so that
\[p e^{\int_0^\delta G(r)dr}< 2^*-1.\]
Hence, \eqref{increasingineq} holds for $0<s\leq\delta$.

We assume now that $s>\delta$. In this case,
\[\psi(s)=AL+\delta^{\tau}L\frac{\left(\frac{s}{L}\right)^{1-\tau}-\delta^{1-\tau}}{1-\tau},\]
where $\tau=\delta g(x_0,\delta)$ and 
\[A=\frac{\psi(\delta L)}{L}=\int_0^\delta e^{\int_t^\delta g(x_0,r)dr}dt.\]
Recall that $\tau\leq\sigma<\frac{2^*-1-p}{2^*-2}$. Then, 
\[\frac{p-\tau}{1-\tau}\leq\frac{p-\sigma}{1-\sigma}<2^*-1.\]
Let us denote $\gamma=2^*-1-\frac{p-\sigma}{1-\sigma}$. We deduce that
\begin{align*}
(p&-s\bar{g}(x_0,s))\frac{\psi(sL)}{L}- (2^*-1)s\psi'(sL)
\\
&=(p-\tau)\left(A+\delta^{\tau}\frac{s^{1-\tau}-\delta^{1-\tau}}{1-\tau}\right)-(2^*-1)\delta^\tau s^{1-\tau}
\\
&\leq (2^*-1)\left((1-\tau)A+\delta^\tau(s^{1-\tau}-\delta^{1-\tau})-\delta^\tau s^{1-\tau}\right)-\gamma\left(A+\delta^{\tau}\frac{s^{1-\tau}-\delta^{1-\tau}}{1-\tau}\right)
\\
&\leq (2^*-1)(A-\delta)-\gamma A=A\left((2^*-1)\left(1-\frac{\delta}{A}\right)-\gamma\right)
\\
&\leq A\left((2^*-1)\left[1-\delta\left(\int_0^\delta e^{\int_t^\delta G(r)dr}dt\right)^{-1}\right]-\gamma\right).
\end{align*}
It is clear that
\[\lim_{\delta\to 0}\frac{\int_0^\delta e^{\int_t^\delta G(r)dr}dt}{\delta}=\lim_{\delta\to 0}\frac{e^{\int_0^\delta G(r)dr}\int_0^\delta e^{-\int_0^t G(r)dr}dt}{\delta}=\lim_{\delta\to 0}e^{-\int_0^\delta G(r)dr}=1.\]
Then, we can take $\delta$ small enough so that 
\[(2^*-1)\left[1-\delta\left(\int_0^\delta e^{\int_t^\delta G(r)dr}dt\right)^{-1}\right]< \gamma.\]
In conclusion, \eqref{increasingineq} holds for $s>\delta$, and thus, for all $s>0$. 

In sum, it follows that the hypotheses of Proposition~\ref{gidasspruck5} are fulfilled for problem $(\bar{P}_\lambda)$ so our claim was true. The proof is now finished.
\end{proof}

We conclude the section by proving Theorem~\ref{modelthm1}, Theorem~\ref{modelthm2} and Theorem~\ref{modelthm3} as consequences of the general results in Section~\ref{mainresults}.

\begin{proof}[Proof of Theorem~\ref{modelthm1}]
The first item of the theorem is a direct consequence of Theorem~\ref{mainthm1}, while the second one follows from Theorem~\ref{nonexistence}.
\end{proof}

\begin{proof}[Proof of Theorem~\ref{modelthm2}]
Let us denote $g(x,s)=\frac{\mu(x)}{(s+\delta)^\gamma}$. In order to prove the first item, we assume that $\delta>0$ and that $\mu$ satisfies \eqref{musmallcond}. It is easy to see that 
\[\inf_{(x,s)\in\Omega\times [0,+\infty)}(s+\delta)g(x,s)=-\frac{\|\mu^-\|_{L^\infty(\Omega)}}{\delta^{\gamma-1}},\quad \sup_{(x,s)\in\Omega\times [0,+\infty)}(s+\delta)g(x,s)=\frac{\|\mu^+\|_{L^\infty(\Omega)}}{\delta^{\gamma-1}}.\]
Therefore, $\tau\leq (s+\delta)g(x,s)\leq\sigma$ for $\tau=-\frac{\|\mu^-\|_{L^\infty(\Omega)}}{\delta^{\gamma-1}}$ and $\sigma=\frac{\|\mu^+\|_{L^\infty(\Omega)}}{\delta^{\gamma-1}}$. Hence, \eqref{musmallcond} implies that $g$ satisfies \eqref{sgsmallcond}. On the other hand, it is clear that $\|(s+\delta)g(\cdot,s)\|_{L^\infty(\Omega)}\to 0$ as $s\to +\infty$. Thus, $g$ also satisfies \eqref{unifconv}. In conclusion, Theorem~\ref{mainthm1} implies that there exists a finite energy solution to \eqref{model1} for any $\lambda>0$.

On the contrary, let us assume now that $\delta=0$ and $\mu(x)\geq\tau>0$ for a.e. $x\in\Omega\setminus\omega$. Then, \eqref{gtoolarge} holds. Thus, Theorem~\ref{nonexistence} implies that problem \eqref{model1} admits no solution for any $\lambda>0$.
\end{proof}

\begin{proof}[Proof of Theorem~\ref{modelthm3}]
Let us denote $g(x,s)=\frac{\mu(x)}{(s+\delta)^\gamma}$. We know that, for every $\varepsilon>0$, there exists $s_0>0$ such that
\begin{equation}\label{ineqvarepsilon}
s|g(x,s)|\leq\|\mu\|_{L^\infty(\Omega)}\frac{s}{(s+\delta)^\gamma}<\varepsilon\quad\forall s\in(0,s_0),\,\,\text{a.e. }x\in\Omega.
\end{equation}
Therefore, \eqref{sguniflimitzero} holds (in particular, $\lim_{s\to 0}sg(x,s)$ obviously exists for a.e. $x\in\Omega$). If we assume that $0\lneq \mu\in C(\overline{\Omega})$ and $p<2^*-1$, then the hypotheses of Theorem~\ref{mainthm5} are fulfilled, so we conclude.

Let us assume on the contrary that $p<\frac{N+1}{N-1}$. Then, in \eqref{ineqvarepsilon} we may take $\sigma=-\tau=\varepsilon\in\left(0,\min\left\{\frac{1}{3},\frac{N+1-(N-1)p}{2}\right\}\right)$ in such a way that $g$ satisfies \eqref{sgsmallcondtilde}. Thus, the result follows after applying Theorem~\ref{mainthm4}.

The proof is finished.
\end{proof}

\section{Appendix}\label{appendix} 

\subsection{Technical lemma}

In this subsection we prove a technical result that is required by the blow-up method.

\begin{lemma}\label{straighteninglemma}
Let $\Omega\subset\mathbb{R}^N (N\geq 3)$ be a bounded domain with boundary of class $\mathcal{C}^2$, $\lambda>0$, $g:\Omega\times (0,+\infty)\to\mathbb{R}$ be a Carath\'eodory function, and $f:[0,+\infty)\to\mathbb{R}$ be a continuous function. Then, for every $x_0\in\partial\Omega$, there exist $U\subset\mathbb{R}^N$ a neighborhood of $x_0$ and an injective and $C^2$ map $y:U\to \mathbb{R}^N$, with $C^2$ inverse, such that $V=y(U\cap\Omega)\subset\mathbb{R}^N_+$, $\Gamma=y(U\cap\partial\Omega)\subset\partial V\cap \partial\mathbb{R}^N_+$ and, if $u$ is a solution to \eqref{lambdaproblem},
then the function $v=u\circ y^{-1}:V\to (0,+\infty)$ is a solution to
\begin{equation}\label{halfspaceeq}
\begin{cases}
-\div(M(y)\nabla v)+b(y)\nabla v+j(y,v)M(y)\nabla v\nabla v=f(v),\quad &y\in V,
\\
v>0,\quad &y\in V,
\\
v=0,\quad &y\in \Gamma,
\end{cases}
\end{equation}
where $M\in C^1(\overline{V})^{N\times N}$ is uniformly elliptic, $b\in C(\overline{V})^N$ and $j(\cdot,\cdot)=g(y^{-1}(\cdot),\cdot)$, being $M$ and $b$ independent of $u$.
\end{lemma}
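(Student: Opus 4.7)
The plan is to build, for each boundary point $x_0 \in \partial\Omega$, a local $C^2$ chart that flattens $\partial\Omega$, and then to compute by the chain rule how $-\Delta$ and $|\nabla u|^2$ transform under it. Since $\partial\Omega$ is of class $C^2$, after a rotation and translation we may assume $x_0 = 0$, that the inward normal is $e_N$, and that $\partial\Omega$ is locally the graph $x_N = \varphi(x')$ of a $C^2$ function $\varphi$ with $\varphi(0)=0$ and $\nabla\varphi(0)=0$. On a small neighborhood $U$ of $x_0$ define $y(x) = (x', x_N - \varphi(x'))$; this is an injective $C^2$ map with $C^2$ inverse, whose Jacobian $Dy$ has determinant $1$ everywhere. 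Shrinking $U$, we get $V := y(U \cap \Omega) \subset \mathbb{R}^N_+$ open with $\overline V$ compact, and $\Gamma := y(U \cap \partial\Omega) \subset \partial V \cap \partial\mathbb{R}^N_+$.

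Set $v = u \circ y^{-1}$ on $V$. The chain rule $\partial_{x_k}u = \sum_i (\partial_{x_k}y_i)(\partial_{y_i}v)$ yields
\begin{equation*}
|\nabla u(x)|^2 = \sum_{i,j} A_{ij}(x)\,(\partial_{y_i}v)(\partial_{y_j}v), \qquad A_{ij}(x) := \sum_k \partial_{x_k}y_i(x)\,\partial_{x_k}y_j(x),
\end{equation*}
so that if we put $M(y) := A(y^{-1}(y)) = (Dy\,Dy^T)\circ y^{-1}(y)$, then $|\nabla u|^2 = M(y)\nabla v\,\nabla v$. Differentiating once more and using $\partial_{x_k}^2 u = \sum_{i,j}(\partial_{x_k}y_i)(\partial_{x_k}y_j)\partial_{y_i}\partial_{y_j}v + \sum_i (\partial_{x_k}^2 y_i)\partial_{y_i}v$, one gets $\Delta u = M{:}D_y^2 v + (\Delta y)\cdot\nabla v$. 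Converting to divergence form via the identity $-\operatorname{div}(M\nabla v) = -M{:}D_y^2 v - (\operatorname{div} M)\cdot\nabla v$ (where $(\operatorname{div} M)_j = \sum_i \partial_{y_i}M_{ij}$), one obtains
\begin{equation*}
-\Delta u \;=\; -\operatorname{div}\bigl(M(y)\nabla v\bigr) + b(y)\cdot\nabla v, \qquad b(y) := (\operatorname{div} M)(y) - (\Delta y)\circ y^{-1}(y).
\end{equation*}

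Since $y \in C^2$ and $Dy$ is invertible with uniformly bounded inverse on the compact set $\overline U$, the matrix $M = Dy\,Dy^T\circ y^{-1}$ is symmetric and its smallest eigenvalue is bounded below by a positive constant on $\overline V$, so $M \in C^1(\overline V)^{N\times N}$ is uniformly elliptic. The drift $b$ involves first derivatives of $M$ and second derivatives of $y$, hence $b \in C(\overline V)^N$; both $M$ and $b$ are manifestly determined by the chart $y$ alone and are independent of $u$. Combining these, the equation $-\Delta u + g(x,u)|\nabla u|^2 = f(u)$ in $U\cap\Omega$ becomes
\begin{equation*}
-\operatorname{div}(M\nabla v) + b\cdot\nabla v + j(y,v)\,M(y)\nabla v\,\nabla v = f(v) \quad \text{in } V,
\end{equation*}
with $j(y,s) := g(y^{-1}(y),s)$, while $v>0$ in $V$ and $v=0$ on $\Gamma$ follow from the corresponding properties of $u$.

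The only slightly delicate point is the bookkeeping when passing from the non-divergence expression $M{:}D_y^2 v$ produced by the chain rule to the required divergence form: one has to isolate the correct first-order correction $(\operatorname{div} M)\cdot\nabla v$ so that it can be absorbed together with $(\Delta y)\cdot\nabla v$ into a single drift $b$ that is continuous (and not just distributional) on $\overline V$. This works because $M \in C^1$ and $y \in C^2$, which is exactly what the $C^2$ regularity of $\partial\Omega$ buys us. Everything else reduces to routine chain-rule computations.
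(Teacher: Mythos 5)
Your proposal is correct and uses the same underlying idea as the paper: flatten $\partial\Omega$ locally via $y(x)=(x',x_N-\psi(x'))$ after a rigid motion, then rewrite $-\Delta u$ and $|\nabla u|^2$ in the $y$ coordinates via the chain rule. The one genuine difference is in the choice of $M$. You take the symmetric matrix $M=(Dy\,Dy^T)\circ y^{-1}$, so that $M\nabla v\cdot\nabla v=|\nabla u|^2$ is automatic, uniform ellipticity follows immediately from the bound $M\xi\cdot\xi=|Dy^T\xi|^2\geq\sigma_{\min}(Dy)^2|\xi|^2$ with $\sigma_{\min}(Dy)$ bounded below on the compact closure, and (for this particular shear, which satisfies $\operatorname{div}M=(\Delta y)\circ y^{-1}=(0,\dots,0,-\Delta\psi)$) the drift $b=\operatorname{div}M-(\Delta y)\circ y^{-1}$ actually vanishes. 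The paper instead uses a \emph{non-symmetric} $M$, with $-2\partial_i\psi$ in the last column and zeros in the last row off the diagonal; this $M$ has the same symmetric part as yours (so the quadratic form, hence the gradient term, agrees), but its divergence only partially cancels $\Delta y$, producing the nonzero drift $b=(0,\dots,0,-\Delta\psi)$, and the paper then verifies ellipticity by a direct Cauchy--Schwarz/Young estimate. Both decompositions are legitimate since the lemma only asserts \emph{some} $M\in C^1$ elliptic and $b\in C^0$; your symmetric choice is the more economical one. Two small points worth being explicit about to keep the argument watertight: the pointwise chain-rule manipulations on $u$ require the interior regularity $u\in W^{2,q}_{\mathrm{loc}}$ recorded in Remark~\ref{strongsolremark}, which the paper invokes and you should too; and the $C^1$ regularity of $M$ and $C^0$ regularity of $b$ on $\overline V$ depend on $\psi$ being $C^2$ up to the closure of the (shrunken) chart domain, which is exactly what the $C^2$ boundary provides.
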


\begin{proof}

Since $\partial\Omega$ is of class $\mathcal{C}^2$, there exist $U\subset\mathbb{R}^N$ a neighborhood of $x_0$ and a $C^2$ function $\psi:U'\to\mathbb{R}$, where $U'=\{x'\in\mathbb{R}^{N-1}:\exists x_N\in\mathbb{R}, (x',x_N)\in U\}$, such that 
\begin{align*}
\psi(x')<x_N\quad &\forall(x',x_N)\in U\cap\Omega,
\\
\psi(x')=x_N\quad &\forall(x',x_N)\in U\cap\partial\Omega.
\end{align*}
Let us define the change of variables $y:U\to\mathbb{R}^N$ by
\[y(x)=(x',x_N-\psi(x'))\quad\forall x\in U.\]
It is clear that
\begin{align*}
y(x)\in\mathbb{R}^N_+\quad &\forall(x',x_N)\in U\cap\Omega,
\\
y(x)\in\partial\mathbb{R}^N_+\quad &\forall(x',x_N)\in U\cap\partial\Omega.
\end{align*}
This proves that $V\subset\mathbb{R}^N_+$ and $\Gamma\subset\partial\mathbb{R}^N_+$. Moreover, it is a simple exercise to prove that $\Gamma\subset\partial V$. It is also easy to see that the function $y^{-1}:y(U)\to U$ given by
\[y^{-1}(z)=(z',z_N+\psi(z'))\quad\forall z\in y(U)\]
is the inverse function of $y$. Note that $y^{-1}$ is well defined since $z'\in U'$ for every $z\in y(U)$.

Let us now define $v:V\cup\Gamma\to\mathbb{R}$ by
\[v(z)=u(y^{-1}(z))\quad\forall z\in V\cup\Gamma.\]
Observe that $v=0$ on $\Gamma$ and $u(x)=v(y(x))$ for all $x\in U\cap\Omega$. We will show next that $v$ satisfies an equation in $V$. 

Now we compute the derivatives that we need. We emphasize that such derivatives can be understood in a pointwise sense due to Remark~\ref{strongsolremark} and to the $C^2$ regularity of $\psi$. We stress also that, as $\psi$ does not depend on $x_N$, it will be understood that $\frac{\partial\psi}{\partial x_N}(x)=0$ for $x\in U$.

\begin{enumerate}[label=\alph*)]
\item $\displaystyle Dy(x)=\left(
\begin{array}{cccc|c}
1&0&\cdots&0&0
\\
0&1& &\vdots&\vdots
\\
\vdots& &\ddots&0&\vdots
\\
0&\cdots&0&1&0
\\
\hline
-\frac{\partial\psi}{\partial x_1}(x')&\cdots&\cdots&-\frac{\partial\psi}{\partial x_{N-1}}(x')&1
\end{array}\right),$
\\\vspace{0.5 cm}
\item $\begin{cases}\displaystyle
\frac{\partial u}{\partial x_i}(x)=\frac{\partial v}{\partial z_i}(y(x))-\frac{\partial v}{\partial z_N}(y(x))\frac{\partial\psi}{\partial x_i}(x'),\quad i=1,\cdots, N-1,
\\\\
\displaystyle
\frac{\partial u}{\partial x_N}(x)=\frac{\partial v}{\partial z_N}(y(x)),
\end{cases}$
\\\vspace{0.5 cm}
\item $|\nabla u(x)|^2=|\nabla v(y(x))|^2+\left(\frac{\partial v}{\partial z_N}(y(x))\right)^2|\nabla\psi(x')|^2-2\frac{\partial v}{\partial z_N}(y(x))\nabla v(y(x))\nabla\psi(x'),$
\\\vspace{0.5 cm}
\item \begin{align*} \frac{\partial^2 u}{\partial x_i^2}(x) &= \frac{\partial^2 v}{\partial z_1\partial z_i}(y(x))\frac{\partial y_1}{\partial x_i}(x)+\cdots+\frac{\partial^2 v}{\partial z_N\partial z_i}(y(x))\frac{\partial y_N}{\partial x_i}(x)-\frac{\partial v}{\partial z_N}(y(x))\frac{\partial^2\psi}{\partial x_i^2}(x')
\\
&-\left[\frac{\partial^2 v}{\partial z_1\partial z_N}(y(x))\frac{\partial y_1}{\partial x_i}(x)+\cdots +\frac{\partial^2 v}{\partial z_N^2}(y(x))\frac{\partial y_N}{\partial x_i}(x)\right]\frac{\partial\psi}{\partial x_i}(x')
\\
&=\frac{\partial^2 v}{\partial z_i^2}(y(x))-2\frac{\partial^2 v}{\partial z_i\partial z_N}(y(x))\frac{\partial\psi}{\partial x_i}(x')+\frac{\partial^2 v}{\partial z_N^2}(y(x))\left(\frac{\partial\psi}{\partial x_i}(x')\right)^2
\\
&-\frac{\partial v}{\partial z_N}(y(x))\frac{\partial^2\psi}{\partial x_i^2}(x'),\quad i=1,\cdots, N-1,
\end{align*}
\item $\displaystyle\Delta u=\Delta v-2\nabla\frac{\partial v}{\partial z_N}\nabla\psi+\frac{\partial^2 v}{\partial z_N^2}|\nabla\psi|^2-\frac{\partial v}{\partial z_N}\Delta\psi.$
\end{enumerate}


Let us denote $j(z,s)=g(y^{-1}(z),s)$ for a.e. $z\in V$ and for all $s>0$. Thus, $v=v(y)$ (from this point $y$ will simply denote variable in $V$) satisfies the equation
\begin{align}\label{newequation}
\nonumber -\Delta v &-\frac{\partial^2 v}{\partial y_N^2}|\nabla\psi|^2+2\nabla\frac{\partial v}{\partial y_N}\nabla\psi+\frac{\partial v}{\partial y_N}\Delta\psi
\\
&=f(v)-j(y,v)\left(|\nabla v|^2+\left(\frac{\partial v}{\partial y_N}\right)^2|\nabla\psi|^2-2\frac{\partial v}{\partial y_N}\nabla v\nabla\psi\right),\quad y\in V.
\end{align}

Let us define the matrix
\[M(y)=\left(
\begin{array}{cccc|c}
1&0&\cdots&0&-2\frac{\partial\psi}{\partial x_1}(y')
\\
0&1& &\vdots&\vdots
\\
\vdots& &\ddots&0&\vdots
\\
0&\cdots&0&1&-2\frac{\partial\psi}{\partial x_{N-1}}(y')
\\
\hline
0&\cdots&\cdots&0&1+|\nabla\psi(y')|^2
\end{array}\right),\quad y\in V,\]
and also the vector $b(y)=(0,\cdots,0,-\Delta\psi(y')),\, y\in V$. Then, one can check that $v$ is a solution to \eqref{halfspaceeq}.

Moreover, let $a>1$ be such that $(a-1)\|\nabla\psi\|_{L^\infty(V)}^2<1$. Then, by Cauchy-Schwarz's and Young's inequalities, the following holds for every $\xi\in\mathbb{R}^N$:
\begin{align*}
M(y)\xi\xi&=|\xi|^2-2\xi_N\nabla\psi\xi'+\xi_N^2|\nabla\psi|^2
\\
&\geq |\xi|^2-(a-1)\xi_N^2|\nabla\psi|^2-\frac{1}{a}|\xi'|^2
\\
&=\left(1-\frac{1}{a}\right)|\xi'|^2+(1-(a-1)|\nabla\psi|^2)\xi_N^2
\\
&\geq \left(1-\frac{1}{a}\right)|\xi'|^2+(1-(a-1)\|\nabla\psi\|_{L^\infty(V)}^2)\xi_N^2
\\
&\geq\min\left\{1-\frac{1}{a},1-(a-1)\|\nabla\psi\|_{L^\infty(V)}^2\right\}|\xi|^2.
\end{align*}
Then, $M$ is uniformly elliptic. The proof is finished.
\end{proof}


\subsection{Local H\"older estimate and global continuity}
\text{ }

This subsection is devoted to proving a local H\"older condition on the solutions to the problems that concern this manuscript. In particular, the solutions are continuous in the interior of $\Omega$. We will show at the end of the section that the solutions are, in fact, continuous up to the boundary.

\begin{lemma}\label{reglemma}
Let $g:\Omega\times (0,+\infty)\to\mathbb{R}$ be a Carath\'eodory function satisfying \eqref{ggeneral} and let $h\in L^q(\Omega)$ for some $q>\frac{N}{2}$. Then, for every $M>c>0$ and every $\omega\subset\subset\Omega$, there exist $\alpha\in (0,1)$, $L>0$ such that every solution $u\in H^1_{\mbox{\tiny loc}}(\Omega)\cap L^\infty(\Omega)$ to
\begin{equation}\label{regholdereq}
\begin{cases}
-\Delta u+g(x,u)|\nabla u|^2=h(x), &x\in\Omega,
\\
c\leq u\leq M, &x\in\Omega,
\end{cases}
\end{equation}
satisfies 
\begin{equation}\label{localholdercondition} \|u\|_{C^{0,\alpha}(\overline{\omega})}\leq L.
\end{equation}
\end{lemma}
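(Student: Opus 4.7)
The plan is to reduce equation \eqref{regholdereq} to a uniformly elliptic quasilinear equation with bounded coefficients and natural (quadratic) growth in the gradient, which is then covered by the classical Hölder regularity theory of Ladyzhenskaya-Uraltseva.

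First I would exploit the two-sided bound $c\leq u\leq M$. Since $g_0$ is continuous on $(0,+\infty)$ and $u$ takes values in the compact interval $[c,M]\subset (0,+\infty)$, hypothesis \eqref{ggeneral} yields
\[
|g(x,u(x))|\leq g_0(u(x))\leq K:=\max_{s\in[c,M]}g_0(s)<+\infty\quad\text{a.e. in }\Omega,
\]
with $K$ depending only on $c,M,g_0$ and not on the particular solution $u$. Setting $b(x):=g(x,u(x))\in L^\infty(\Omega)$ with $\|b\|_{L^\infty(\Omega)}\leq K$, $u$ becomes a bounded weak solution of
\[
-\Delta u+b(x)|\nabla u|^2=h(x),\quad x\in\Omega,
\]
with $h\in L^q(\Omega)$, $q>N/2$.

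Next I would carry out the standard Caccioppoli estimate for equations with natural growth. For a cut-off $\eta\in C_c^1(\Omega)$ with $0\leq\eta\leq 1$ and $\eta\equiv 1$ on a slightly enlarged neighborhood of $\overline{\omega}$, and for a level $k\in\mathbb{R}$, I would test with $\phi=\eta^2\varphi((u-k)^+)$, where
\[
\varphi(t)=\frac{e^{2Kt}-1}{2K},\qquad \varphi'(t)=e^{2Kt},\qquad \varphi'(t)-K\varphi(t)\geq 1\quad\forall t\geq 0.
\]
This exponential factor is precisely designed so that, upon distributing the derivatives, the quadratic-gradient contribution $b(x)|\nabla u|^2\eta^2\varphi((u-k)^+)$ is absorbed by $\eta^2\varphi'((u-k)^+)|\nabla u|^2$ thanks to $\varphi'-K\varphi\geq 1$. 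A symmetric test function $\phi=\eta^2\varphi((u-k)^-)$ handles the lower truncations. Combining the two and using Young's inequality on the cross term $\int 2\eta\nabla\eta\cdot\nabla u\,\varphi((u-k)^\pm)$ yields a Caccioppoli-type inequality for $(u-k)^\pm$ on level sets, with constants depending only on $K$, $\|\eta\|_{C^1}$ and $\|h\|_{L^q(\Omega)}$.

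Finally, since $q>N/2$ places $h$ in the correct scaling class, this Caccioppoli inequality feeds directly into the De Giorgi iteration scheme (as in \cite{LU}) and produces the local Hölder estimate \eqref{localholdercondition} with exponent $\alpha\in(0,1)$ and constant $L$ depending on $c,M,g_0,q,\|h\|_{L^q(\Omega)},\omega,\Omega$, but not on the particular solution $u$. The main obstacle is precisely the quadratic gradient term $g(x,u)|\nabla u|^2$: all the subtlety lies in absorbing it, and the exponential test function $\varphi$ is the standard device for doing so; once the Caccioppoli inequality is in hand, the remaining De Giorgi iteration is routine and well-known.
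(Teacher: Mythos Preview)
Your proposal is correct and leads to the same conclusion, but it takes a different standard route from the paper. The paper also bounds $|g(x,u)|\leq m_0:=\max_{[c,M]}g_0$, but instead of the exponential test function it uses the plain test function $(u-k)^+\zeta^2$ together with the Ladyzhenskaya--Ural'tseva level restriction: one only considers levels $k\geq \sup_{B_\rho}u-\delta$ with $\delta<\frac{1}{2m_0}$, so that on $\{u>k\}$ one has $0\leq u-k\leq\delta$ and hence $|g(x,u)||\nabla u|^2(u-k)\zeta^2\leq \delta m_0|\nabla u|^2\zeta^2$ is absorbed directly. This places $u$ in the class $\mathcal{B}_2(\Omega,M,\gamma,\delta,\frac{1}{2q})$ of \cite[Chapter~2, \S6]{LU}, and the H\"older estimate follows from \cite[Theorem~6.1]{LU}. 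Your exponential trick (in the spirit of Boccardo--Murat--Puel) absorbs the quadratic term for \emph{all} levels $k$ at once, at the price of a slightly less elementary test function; the paper's approach keeps the test function simple but must restrict the levels, which is exactly what the $\mathcal{B}_2$ framework is built for. Both are textbook devices and yield the same Caccioppoli inequality feeding into De Giorgi iteration, with constants depending only on $c,M,g_0,q,\|h\|_{L^q},\omega,\Omega$.
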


\begin{proof}
Let us consider $m_0=\max_{s\in[c,M]}g_0(s)$, where $g_0:(0,+\infty)\to [0,+\infty)$ is the continuous function given by \eqref{ggeneral}, and let us take $\delta\in\left(0,\frac{1}{2m_0}\right)$. With the notation of \cite[Chapter~2, Section~6, p.~81]{LU}, we will show that $u$ belongs to the class $\mathcal{B}_2\left(\Omega,M,\gamma,\delta,\frac{1}{2q}\right)$ for some $\gamma>0$ to be determined later. This implies \eqref{localholdercondition} by virtue of \cite[Theorem 6.1, p.90]{LU}.

Indeed, let us fix $\rho>0$ and let $B_\rho$ be an open ball of radius $\rho$ such that $B_\rho\subset\Omega$. Let us fix also $\sigma\in (0,1)$, and consider a function $\zeta\in C_c^\infty(B_\rho)$ satisfying that $0\leq\zeta\leq 1$ in $B_\rho$, $\zeta\equiv 1$ in the concentric open ball $B_{\rho-\sigma\rho}$, and $|\nabla\zeta|<\frac{b}{\sigma\rho}$ in $B_\rho$ for some constant $b>0$ independent of $\rho,\sigma$. 

Let us denote $k_\rho=\sup_{x\in B_\rho}u(x)-\delta$, and take $k\geq k_\rho$. We also consider the set 
\[A_{k,\rho}=\{x\in B_\rho: u(x)\geq k\}.\]
Clearly, $(u-k)^+\zeta^2\in H_0^1(\Omega)\cap L^\infty(\Omega)$ and has compact support in $\Omega$, so it may be taken as test function in \eqref{regholdereq}. Thus we obtain
\[2\int_{A_{k,\rho}}\zeta(u-k)\nabla\zeta\nabla u+\int_{A_{k,\rho}}|\nabla u|^2\zeta^2+\int_{A_{k,\rho}}g(x,u)|\nabla u|^2(u-k)\zeta^2=\int_{A_{k,\rho}}h(x)(u-k)\zeta^2.\]
On the one hand, by \eqref{ggeneral} and by the definitions of $k_\rho, m_0$ we deduce that
\[-\int_{A_{k,\rho}}g(x,u)|\nabla u|^2(u-k)\zeta^2\leq \delta m_0\int_{A_{k,\rho}}|\nabla u|^2\zeta^2.\]
On the other hand, Young's inequality yields
\[-2\int_{A_{k,\rho}}\zeta(u-k)\nabla\zeta\nabla u\leq 2\int_{A_{k,\rho}}(u-k)^2|\nabla\zeta|^2+\frac{1}{2}\int_{A_{k,\rho}}|\nabla u|^2\zeta^2.\]
Therefore, we derive
\[\left(\frac{1}{2}-\delta m_0\right)\int_{A_{k,\rho}}|\nabla u|^2\zeta^2\leq2\int_{A_{k,\rho}}(u-k)^2|\nabla\zeta|^2+\int_{A_{k,\rho}}h(x)(u-k)\zeta^2.\]
Next, it follows from H\"older's inequality that
\[\int_{A_{k,\rho}}h(x)(u-k)\zeta^2\leq\delta\|h\|_{L^q(\Omega)}|A_{k,\rho}|^\frac{1}{q'}\]
and
\begin{align*}
2\int_{A_{k,\rho}}(u-k)^2\zeta^2 &\leq \frac{2b}{\rho^2\sigma^2}\int_{A_{k,\rho}}(u-k)^2\leq \frac{2b}{\rho^2\sigma^2}\|(u-k)^2\|_{L^\infty(A_{k,\rho})}|A_{k,\rho}|
\\
&\leq \frac{2b C(N)}{\rho^{2-\frac{N}{q}}\sigma^2}\|(u-k)^2\|_{L^\infty(A_{k,\rho})}|A_{k,\rho}|^\frac{1}{q'},
\end{align*}
where $|B_\rho|=C(N)^q\rho^N$. In sum,
\begin{equation}\label{LUclassineq}
\int_{A_{k,\rho-\sigma\rho}}|\nabla u|^2\leq \gamma\left(\frac{1}{\rho^{2-\frac{N}{q}}\sigma^2}\|(u-k)^2\|_{L^\infty(A_{k,\rho})}+1\right)|A_{k,\rho}|^\frac{1}{q'}, 
\end{equation}
where 
\[\gamma=\frac{\max\left\{2bC(N),\delta\|h\|_{L^q(\Omega)}\right\}}{\frac{1}{2}-\delta m_0}.\]
Furthermore, notice that $v=-u$ satisfies
\[-\Delta v-g(x,-v)|\nabla v|^2=-h(x),\quad x\in\Omega.\]
Therefore, it is straightforward to check that the inequality \eqref{LUclassineq} also holds by substituting $u$ with $-u$. In conclusion, $u\in \mathcal{B}_2\left(\Omega,M,\gamma,\delta,\frac{1}{2q}\right)$ and the proof is finished.
\end{proof}

\begin{remark}\label{strongsolremark}
Let $g:\Omega\times (0,+\infty)\to\mathbb{R}$ be a Carath\'eodory function satisfying \eqref{ggeneral}, $h\in L^q(\Omega)$ for some $q>\frac{N}{2}$ and $u\in H^1_{\mbox{\tiny loc}}(\Omega)\cap L^\infty(\Omega)$. Using the local H\"older regularity given by Lemma~\ref{reglemma} one can prove that, if $u\in H^1_{\mbox{\tiny loc}}(\Omega)\cap L^\infty(\Omega)$ is a solution to \eqref{holdereq} that is locally bounded away from zero, then $u\in W^{1,2q}_{\mbox{\tiny loc}}(\Omega)$. The proof is based on a standard bootstrap argument, see \cite[Appendix]{CLLM} for further details. Moreover, combining this local summability of the gradients with the classical Calderon-Zygmund regularity theory one can easily prove that, if $h\in L^\infty(\Omega)$, then $u\in W^{2,q}_{\mbox{\tiny loc}}(\Omega)$ for every $q<\infty$. 
\end{remark}

We will show next that every solution to \eqref{hproblem} is continuous up to $\partial\Omega$. 

\begin{lemma}\label{continuouslemma}
Let $g:\Omega\times (0,+\infty)\to[0,+\infty)$ be a Carath\'edory function satisfying \eqref{ggeneral} and let $h\in L^q(\Omega)$ for some $q>\frac{N}{2}$. Then, every solution to \eqref{hproblem} belongs to $C(\overline{\Omega})$.
\end{lemma}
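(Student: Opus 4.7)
The plan is to get interior continuity from Lemma~\ref{reglemma} and then to obtain continuity up to $\partial\Omega$ by dominating $u$ by a continuous barrier that vanishes on $\partial\Omega$. Interior continuity is immediate: for every $\omega\subset\subset\Omega$, the definition of solution supplies $c(\omega)>0$ with $u\geq c$ on $\omega$, while $M=\|u\|_{L^\infty(\Omega)}$ is a global upper bound; applying Lemma~\ref{reglemma} on a slightly enlarged $\omega'\subset\subset\Omega$ (regarding the equation as posed on $\omega'$) yields $u\in C^{0,\alpha}(\overline{\omega})$, so $u\in C(\Omega)$. It therefore suffices to show that $u(x)\to 0$ as $x\to x_0$ for each $x_0\in\partial\Omega$.

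For the barrier I will take $v\in H^1_0(\Omega)\cap L^\infty(\Omega)$ the (unique) finite-energy solution of $-\Delta v=|h|$ in $\Omega$. Since $|h|\in L^q(\Omega)$ with $q>N/2$ and $\partial\Omega$ is of class $\mathcal{C}^2$, Stampacchia's bound together with the classical De Giorgi--Nash--Moser regularity give $v\in C^{0,\beta}(\overline{\Omega})$, with $v\geq 0$ in $\Omega$ and $v=0$ on $\partial\Omega$. The claim to prove is then $u\leq v$ pointwise in $\Omega$: granting it, $0<u(x)\leq v(x)\to 0$ as $x\to x_0$, and extending $u$ by $0$ on $\partial\Omega$ produces an element of $C(\overline{\Omega})$.

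I expect the comparison $u\leq v$ to be the main obstacle, since $u$ only lies in $H^1_{\mbox{\tiny loc}}(\Omega)$ and $(u-v)^+$ is not directly admissible as a test function. My plan is, for $\epsilon,\eta>0$, to test with
\[
\phi_\eta \;=\; \zeta_\eta^{2}\,(u-v-\epsilon)^+,
\]
where $\zeta_\eta\in C_c^\infty(\Omega)$ satisfies $\zeta_\eta\equiv 1$ on $\{\operatorname{dist}(\cdot,\partial\Omega)\geq 2\eta\}$, $\supp\zeta_\eta\subset\{\operatorname{dist}(\cdot,\partial\Omega)>\eta\}$, and $|\nabla\zeta_\eta|\leq C/\eta$. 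Then $\phi_\eta\in H_0^1(\Omega)\cap L^\infty(\Omega)$ has compact support, so by Remark~\ref{testfunctionsremark} it is admissible both in the weak formulation for $u$ and in the one for $v$. Using $g\geq 0$ to drop the nonnegative gradient term in the equation for $u$, and $h\leq|h|$ to compare right-hand sides, subtracting yields $\int_\Omega\nabla(u-v)\cdot\nabla\phi_\eta\leq 0$. A Young-inequality manipulation then gives the Caccioppoli-type estimate
\[
\int_\Omega \zeta_\eta^{2}\,|\nabla(u-v-\epsilon)^+|^{2}\;\leq\;\frac{C}{\eta^{2}}\int_{\{\operatorname{dist}(\cdot,\partial\Omega)\leq 2\eta\}}\bigl[(u-v-\epsilon)^+\bigr]^{2}.
\]

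The remaining step is to show that the right-hand side tends to $0$ as $\eta\to 0$, and here I will exploit $u^\gamma\in H_0^1(\Omega)$ through Hardy's inequality. Since $(u-v-\epsilon)^+>0$ forces $u>\epsilon$, the $L^\infty$ bound on $u$ together with a short case analysis on $\gamma\leq 1$ versus $\gamma>1$ gives a pointwise bound $[(u-v-\epsilon)^+]^{2}\leq C_{\epsilon,\gamma,M}\,(u^\gamma)^{2}$. Hardy's inequality yields $\int_\Omega(u^\gamma)^{2}/\operatorname{dist}(\cdot,\partial\Omega)^{2}<+\infty$, and therefore
\[
\frac{1}{\eta^{2}}\int_{\{\operatorname{dist}(\cdot,\partial\Omega)\leq 2\eta\}}(u^\gamma)^{2}\;\leq\;4\int_{\{\operatorname{dist}(\cdot,\partial\Omega)\leq 2\eta\}}\frac{(u^\gamma)^{2}}{\operatorname{dist}(\cdot,\partial\Omega)^{2}}\;\longrightarrow\;0
\]
by dominated convergence. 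Fatou's lemma then gives $\nabla(u-v-\epsilon)^+=0$ a.e.\ in $\Omega$; by connectedness of $\Omega$ the function $(u-v-\epsilon)^+$ equals a constant $c\geq 0$ a.e. If $c>0$ then $u^\gamma\geq\epsilon^\gamma$ a.e., which contradicts $\int_\Omega(u^\gamma)^{2}/\operatorname{dist}(\cdot,\partial\Omega)^{2}<+\infty$ because $\int_\Omega\operatorname{dist}(\cdot,\partial\Omega)^{-2}=+\infty$. Hence $c=0$, so $u\leq v+\epsilon$ in $\Omega$, and letting $\epsilon\to 0$ (using interior continuity of $u$ and $v$) concludes $u\leq v$ and the proof.
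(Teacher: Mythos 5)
Your plan is correct, but its implementation of the boundary step is more laborious than the paper's. The paper's proof also uses a linear barrier, but it first makes the change of unknown $v=u^{\gamma}$ (taking WLOG $\gamma>1$, which is harmless since $u\leq M$ implies $u^{\gamma'}\in H_0^1(\Omega)$ whenever $u^{\gamma}\in H_0^1(\Omega)$ and $\gamma'>\gamma$). Because $\gamma>1$ and $g\geq 0$, a one-line pointwise computation gives $-\Delta v\leq\gamma u^{\gamma-1}h\leq C\lvert h\rvert$, and since $v\in H_0^1(\Omega)\cap L^\infty(\Omega)$ one can compare $v$ directly with the $H_0^1\cap C(\overline\Omega)$ solution $w$ of $-\Delta w=C\lvert h\rvert$ by testing with $(v-w)^+\in H_0^1(\Omega)$; this yields $0<v\leq w$ at once, hence $u=v^{1/\gamma}\to 0$ at the boundary. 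Your version instead keeps $u$, which is only in $H^1_{\mbox{\tiny loc}}(\Omega)$, and must compensate: the cut-off $\zeta_\eta$, the Caccioppoli estimate, the pointwise bound $(u-v-\epsilon)^+\leq C_\epsilon u^{\gamma}$ on $\{u>\epsilon\}$, Hardy's inequality for $u^{\gamma}\in H_0^1(\Omega)$, and the Liouville-style conclusion from $\nabla(u-v-\epsilon)^+\equiv 0$ on the connected $\Omega$ are all there precisely to legitimise a comparison that the paper gets for free by passing to $u^{\gamma}$. Both arguments are sound; the paper's is shorter because the transformation moves the problem into the function space ($H_0^1(\Omega)$) where comparison is standard, whereas yours re-derives the comparison "by hand" in $H^1_{\mbox{\tiny loc}}(\Omega)$. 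A small point of hygiene: you only need $v\in C(\overline\Omega)$ for the barrier, which Stampacchia plus De Giorgi--Nash already gives without invoking $C^{0,\beta}$ regularity up to the boundary; and in the final step $u\leq v+\epsilon$ for all $\epsilon>0$ already gives $u\leq v$ without appealing to continuity.
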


\begin{proof}
Let $u\in H^1_{\mbox{\tiny loc}}(\Omega)\cap L^\infty(\Omega)$ be a solution to \eqref{hproblem}. Clearly, Lemma~\ref{reglemma}  implies that $u\in C(\Omega)$. It is left to prove the continuity on the boundary. In order to do so, recall that there exists $\gamma>0$ such that $u^\gamma\in H_0^1(\Omega)$. We may assume without loss of generality that $\gamma>1$. Let us denote $v=u^\gamma$. Using that $g\geq 0$, we deduce that
\[-\Delta v=-\gamma(\gamma-1)u^{\gamma-2}|\nabla u|^2+\gamma u^{\gamma-1}(-\Delta u)\leq \gamma u^{\gamma-1}h(x)\leq C|h(x)|,\quad x\in\Omega,\]
for some constant $C>0$. On the other hand,
let us consider the unique solution $w\in H_0^1(\Omega)\cap C(\overline{\Omega})$ to the following problem:
\begin{equation*}
\begin{cases}
-\Delta w=|h(x)|, &x\in\Omega,
\\
w=0, &x\in\partial\Omega.
\end{cases}
\end{equation*}
By standard comparison, $0<v\leq w$ in $\Omega$. Therefore, $\lim_{x\to y}v(x)=0$ for every $y\in\partial\Omega$ and, in particular, $u\in C(\overline{\Omega})$.
\end{proof}


\subsection{Global H\"older estimates}
\text{ }

In the previous subsection we have shown that the solutions to the problems in this paper are locally H\"older continuous. In the proof, we have strongly used the fact that the solutions are locally bounded away from zero in order to avoid the possible singularity of $g(x,u(x))$ as $x$ approaches $\partial\Omega$. In the present subsection we aim to prove a \emph{global} H\"older estimate, i.e. the singularity cannot be avoided since we need a H\"older estimate to hold even near $\partial\Omega$. Therefore, we perform a different proof that requires $g$ to satisfy the stronger condition \eqref{sgsmallcond}.

\begin{lemma}\label{holderestimatelemma}
Let $\Omega\subset\mathbb{R}^N$ be a smooth bounded domain. Let $\{M_n\}\subset L^\infty(\Omega)^{N\times N}$ and $\{b_n\}\subset  L^\infty(\Omega)^N$ be bounded sequences in the norm of their respective spaces. Assume that there exists $\eta>0$ such that
\[M_n(y)\xi\xi\geq\eta|\xi|^2\quad\text{a.e. }y\in\Omega,\,\,\forall\xi\in\mathbb{R}^N,\,\,\forall n.\]
For some $p>1$, $\delta,\tau\geq 0$, $\sigma\in(0,1)$ independent of $n$, let $f:[0,+\infty)\to [0,+\infty)$ be a continuous function satisfying \eqref{boundsf} and, for all $n$, let $g_n:\Omega\times (0,+\infty)\to [0,+\infty)$ be a Carath\'edory function satisfying \eqref{sgsmallcond}. Let $\{v_n\}\subset H^1(\Omega)\cap C(\overline{\Omega})$ be such that $0<v_n\leq C$ in $\Omega$ for all $n$ and for some $C>0$. For some sequences $\{L_n\}\subset (0,+\infty)$, $\{\epsilon_n\}\subset [0,1]$, assume that $v_n$ satisfies
\[-\div(M_n(y)\nabla v_n)+b_n(y)\nabla v_n+\frac{1}{L_n}g_n\left(y,\frac{v_n}{L_n}\right)M_n(y)\nabla v_n\nabla v_n=L_n^p f\left(\frac{v_n}{L_n}\right)+\epsilon_n v_n^\sigma,\quad y\in\Omega,\,\,\forall n.\]
If $\delta=0$, assume in addition that there exists $\gamma\in\left(\frac{1-\tau}{2},1-\sigma\right]$ such that $v_n^\gamma\in H^1(\Omega)$ for all $n$. For some (possibly empty) set $\Gamma\subset\partial\Omega$ that is open in the topology of $\partial\Omega$ and connected, let us suppose also that
\[v_n=0,\quad y\in \Gamma.\]
Furthermore, if $\{L_n\}$ diverges, assume additionally that $\delta=0$ and $\epsilon_n=0$ for all $n$. Then, for every smooth bounded domain $\omega$ satisfying that $\overline{\omega}\subseteq\Omega\cup\Gamma$, there exist $\alpha\in (0,1)$, $C>0$ such that, taking a (not relabeled) subsequence if necessary,
\[\|v_n\|_{C^{0,\alpha}(\overline{\omega})}\leq C\quad\forall n.\]
\end{lemma}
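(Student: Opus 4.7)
The plan is to eliminate the singular quadratic gradient term by a power-type change of variable, derive a Caccioppoli/De Giorgi inequality for the new unknown, and then invoke the classical Ladyzhenskaya--Uraltseva Hölder theorem up to $\Gamma$. The crucial quantitative input will be the strict inequality $2\sigma-1<\tau$, which, after the change of variable, leaves the residual quadratic gradient term with coefficient $\mu:=\frac{\sigma-\tau}{1-\sigma}<1$.

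I would first set $c_n:=\delta L_n$ and $w_n:=\phi_n(v_n)$, where $\phi_n(s)=(s+c_n)^{1-\sigma}-c_n^{1-\sigma}$. Since either $\{L_n\}$ is bounded (so $c_n$ is bounded) or $\delta=\epsilon_n=0$ (so $c_n\equiv 0$), $\phi_n$ is a homeomorphism of $[0,\|v_n\|_\infty]$ whose inverse is Lipschitz with constant uniform in $n$, and $w_n=0$ on $\Gamma$. The global $H^1$ regularity of $w_n$ amounts, in the case $\delta=0$, to showing $v_n^{-\sigma}\nabla v_n\in L^2$; this follows from $v_n^\gamma\in H^1(\Omega)$ with $\gamma\in((1-\tau)/2,1-\sigma]$ and the chain rule, since $\gamma\le 1-\sigma$ combined with boundedness of $v_n$ implies $v_n^{2-2\sigma-2\gamma}$ is bounded and hence $v_n^{-2\sigma}|\nabla v_n|^2\le C\,v_n^{2\gamma-2}|\nabla v_n|^2\in L^1$.

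A direct computation using the equation for $v_n$ and the bounds $\tau\le(v_n/L_n+\delta)g_n(\cdot,v_n/L_n)\le\sigma$ and $s^p\le f(s)\le a(s+\delta)^p$ yields
\[-\div(M_n\nabla w_n)+b_n\cdot\nabla w_n=H_n+\mu_n(y)\,\frac{M_n\nabla w_n\cdot\nabla w_n}{w_n+c_n^{1-\sigma}},\]
with $0\le\mu_n(y)\le\mu<1$ and $|H_n|\le C(v_n+c_n)^{p-\sigma}+C$; the latter is uniformly bounded because $v_n+c_n$ is, as the requirements $\delta=\epsilon_n=0$ in the divergent case $L_n\to\infty$ are exactly tailored to force this. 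I would then test the equation against $(w_n-k)^+\eta^2$ and $(k-w_n)^+\eta^2$ in a ball $B$ whose closure meets $\partial\Omega$ only inside $\Gamma$. The identity $(w_n-k)^+/(w_n+c_n^{1-\sigma})\le 1$ on $\{w_n>k\}$ lets the singular term be absorbed by the principal part thanks to the factor $1-\mu>0$, while Young's inequality handles the contributions of $b_n\cdot\nabla w_n$ and of the cutoff, producing the De Giorgi class inequality
\[\int_{A_{k,B}^{\pm}}|\nabla w_n|^2\eta^2\le\frac{C}{1-\mu}\biggl(\int[(w_n-k)^{\pm}]^2(|\nabla\eta|^2+\eta^2)+|A_{k,B}^{\pm}|\biggr)\]
uniformly in $n$, compatible with the Dirichlet datum $w_n=0$ on $\Gamma$.

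The conclusion would then follow from the classical De Giorgi--Nash--Moser boundary Hölder estimate for the class $\mathcal{B}_2$ (see \cite[Chapter~2]{LU}), giving $\|w_n\|_{C^{0,\alpha}(\overline{\omega})}\le C$ uniformly in $n$; the estimate transfers to $v_n=\phi_n^{-1}(w_n)$ through the uniformly Lipschitz inverse $\phi_n^{-1}(t)=(t+c_n^{1-\sigma})^{1/(1-\sigma)}-c_n$. The main obstacle is the Caccioppoli step, where the singular coefficient $1/(w_n+c_n^{1-\sigma})$ must be tamed even in the limit $c_n\to 0$; it is precisely the strict inequality $2\sigma-1<\tau$, i.e.\ $\mu<1$, that makes absorption possible with constants independent of $n$. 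A secondary subtlety is the global $H^1$-regularity of $w_n$ when $\delta=0$, for which the auxiliary hypothesis $v_n^\gamma\in H^1(\Omega)$ is indispensable: without it one could not carry the energy computation all the way up to $\Gamma$.
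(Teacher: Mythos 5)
Your proposal is essentially the same as the paper's argument: a power change of variable $w_n=(v_n+\delta L_n)^{\gamma}-(\delta L_n)^{\gamma}$ to convert the singular gradient term into one with coefficient strictly below one, followed by a De Giorgi class estimate and the Ladyzhenskaya--Uraltseva H\"older theory up to $\Gamma$. The only cosmetic difference is that you fix $\gamma=1-\sigma$ and then show the hypothesis $v_n^\gamma\in H^1$ for some $\gamma\in((1-\tau)/2,1-\sigma]$ already implies $v_n^{1-\sigma}\in H^1$ when $\delta=0$, whereas the paper carries a general $\gamma$ from that interval; after this specialization your coefficient $\mu=\frac{\sigma-\tau}{1-\sigma}$ is exactly the paper's $\frac{1-\gamma-\tau}{\gamma}$ at $\gamma=1-\sigma$, and both routes use $2\sigma-1<\tau$ in precisely the same way to keep it below one.
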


\begin{proof}
Let us denote $\delta_n=L_n\delta$ and consider the function $u_n=(v_n+\delta_n)^\gamma-\delta_n^\gamma$ for $\gamma\in\left(\frac{1-\tau}{2},1-\sigma\right]$; if $\delta=0$, then $\gamma$ must be chosen so that $u_n\in H^1(\Omega)$ for all $n$. It is easy to see that $u_n\in C(\overline{\Omega})$, $0<u_n\leq C$ in $\Omega$, $u_n=0$ on $\Gamma$, and it satisfies
\begin{equation}\label{holdereq}
-\div(M_n(y)\nabla u_n)+b_n(y)\nabla u_n=\tilde{g}_n(y,u_n)M_n(y)\nabla u_n\nabla u_n+\tilde{f}_n(u_n),\quad y\in\Omega,
\end{equation}
where
\begin{align*}
\tilde{g}_n(y,s)&=\frac{1-\gamma-\frac{(s+\delta_n^\gamma)^\frac{1}{\gamma}}{L_n}g_n\left(y,\frac{(s+\delta_n^\gamma)^\frac{1}{\gamma}}{L_n}-\delta\right)}{\gamma (s+\delta_n^\gamma)}\quad\text{a.e. }y\in\Omega,\,\,\forall s>0,\,\,\forall n,
\\
\tilde{f}_n(s)&=\gamma (s+\delta_n^\gamma)^\frac{\gamma-1}{\gamma}\left(L_n^p f\left(\frac{(s+\delta_n^\gamma)^\frac{1}{\gamma}}{L_n}-\delta\right)+\epsilon_n \left((s+\delta_n^\gamma)^\frac{1}{\gamma}-\delta_n\right)^\sigma\right)\quad\text{a.e. }y\in\Omega,\,\,\forall n.
\end{align*}

We claim that $\{u_n\tilde{f}_n(u_n)\}$ admits a subsequence bounded in $L^\infty(\Omega)$ and, furthermore,
\begin{equation}\label{sfcond}
0\leq s \tilde{g}_n(y,s)\leq c\quad\text{a.e. }y\in\Omega,\,\,\forall s>0,\,\,\forall n,
\end{equation}
for some $c\in (0,1)$. In order to prove the claim, we first observe that \eqref{boundsf} implies
\[\tilde{f}_n(s)\leq a(s+\delta_n^\gamma)^\frac{p-1+\gamma}{\gamma}+\epsilon_n (s+\delta_n^\gamma)^\frac{\sigma-1+\gamma}{\gamma}.\]
On the one hand, let us assume that $\{L_n\}$ is not divergent. Then, it admits a (not relabeled) bounded subsequence. Hence, taking into account that $p>1$, it clearly holds that \[a u_n(u_n+\delta_n^\gamma)^\frac{p-1+\gamma}{\gamma}\leq C\quad\forall n.\] Besides, since $\gamma\geq\frac{1-\sigma}{2}$, then $\sigma-1+2\gamma\geq 0$, so
\[\gamma\epsilon_n u_n(u_n+\delta_n^\gamma)^\frac{\sigma-1+\gamma}{\gamma}\leq (u_n+\delta_n^\gamma)^\frac{\sigma-1+2\gamma}{\gamma}\leq C\quad\forall n.\]
Thus, $\{u_n\tilde{f}_n(u_n)\}$ is bounded in $L^\infty(\Omega)$. On the other hand, assume that $\{L_n\}$ diverges. Then, $\delta_n=\epsilon_n=0$ for all $n$, so $\tilde{f}_n(u_n)\leq a u_n^\frac{p-1+\gamma}{\gamma}\leq C$ for all $n$ and, again, $\{u_n\tilde{f}_n(u_n)\}$ is bounded in $L^\infty(\Omega)$.

Let us now verify that \eqref{sfcond} holds. Indeed, from \eqref{sgsmallcond} and $\gamma\in\left(\frac{1-\tau}{2},1-\sigma\right]$, it follows
\[0\leq\frac{1-\gamma-\sigma}{\gamma}\leq (s+\delta_n^\gamma)\tilde{g}_n(y,s)\leq \frac{1-\gamma-\tau}{\gamma}<1.\] Thus, our claim holds.

In sum, we may apply the arguments in \cite[Appendix]{CLLM} without relevant changes to prove that $\|u_n\|_{C^{0,\alpha}(\overline{\omega})}\leq C$ for some $C>0, \alpha\in (0,1)$. In particular, using that the function $s\mapsto s^\frac{1}{\gamma}$ is locally Lipschitz for $s\geq 0$, we have that
\begin{align*}
|v_n(x)-v_n(y)|&=|v_n(x)+\delta_n-v_n(y)-\delta_n|
\\
&\leq C|(v_n(x)+\delta_n)^\gamma-(v_n(y)+\delta_n)^\gamma|
\\
&=C|u_n(x)-u_n(y)|\leq C|x-y|^\alpha\quad\forall x,y\in\omega.
\end{align*}
In conclusion, $\|v_n\|_{C^{0,\alpha}(\overline{\omega})}\leq C$, as we wanted to prove.
\end{proof}

\end{document}